\DeclareMathAlphabet{\mathpzc}{OT1}{pzc}{m}{it}
\newtheorem{thm}[subsubsection]{Theorem}
\newtheorem{mainthm}{Theorem}
\newenvironment{mainthmbis}[1]
  {%
   \addtocounter{mainthm}{-1}%
   \begin{mainthm}}
  {\end{mainthm}}
  \newtheorem{mainconj}[mainthm]{Conjecture}
  \newenvironment{mainconjbis}[1]
  {%
   \addtocounter{mainthm}{-1}%
   \begin{mainconj}}
  {\end{mainconj}}
\newtheorem{cor}[subsubsection]{Corollary}
\newtheorem{lem}[subsubsection]{Lemma}
\newtheorem{prop}[subsubsection]{Proposition}
\newtheorem{defn}[subsubsection]{Definition}
\theoremstyle{remark}
\newtheorem{rem}[subsubsection]{Remark}
\newtheorem{example}[subsubsection]{Example}
\numberwithin{equation}{section}
\newcommand{\nc}{\newcommand}
\nc{\renc}{\renewcommand}
\nc{\ssec}{\subsection}
\nc{\sssec}{\subsubsection}
\nc{\on}{\operatorname}
\nc\ol{\overline}
\nc\wt{\widetilde}
\nc\tboxtimes{\wt{\boxtimes}}
\nc\tstar{\wt{\star}}
\nc{\alp}{\alpha}
\nc{\ZZ}{{\mathbb Z}}
\nc{\NN}{{\mathbb N}}
\nc{\OO}{{\mathbb O}}
\renc{\SS}{{\mathbb S}}
\nc{\DD}{{\mathbb D}}
\nc{\GG}{{\mathbb G}}
\renewcommand{\AA}{{\mathbb A}}
\nc{\Fq}{{\mathbb F}_q}
\nc{\Fqb}{\ol{{\mathbb F}_q}}
\nc{\Ql}{\ol{{\mathbb Q}_\ell}}
\nc{\id}{\on{id}}
\nc\X{\mathcal X}
\nc{\Hom}{\on{Hom}}
\nc{\Lie}{\on{Lie}}
\nc{\Loc}{\on{Loc}}
\nc{\Pic}{\on{Pic}}
\nc{\Bun}{\on{Bun}}
\nc{\IC}{\on{IC}}
\nc{\Aut}{\on{Aut}}
\nc{\rk}{\on{rk}}
\nc{\Sh}{\on{Sh}}
\nc{\Perv}{\on{Perv}}
\nc{\pos}{{\on{pos}}}
\nc{\Conv}{\on{Conv}}
\nc{\Sph}{\on{Sph}}
\nc{\Sym}{\on{Sym}}
\nc{\BunBb}{\overline{\Bun}_B}
\nc{\BunNb}{\overline{\Bun}_N}
\nc{\BunTb}{\overline{\Bun}_T}
\nc{\BunBbm}{\overline{\Bun}_{B^-}}
\nc{\BunBbel}{\overline{\Bun}_{B,el}}
\nc{\BunBbmel}{\overline{\Bun}_{B^-,el}}
\nc{\Buno}{\overset{o}{\Bun}}
\nc{\BunPb}{{\overline{\Bun}_P}}
\nc{\BunBM}{\Bun_{B(M)}}
\nc{\BunBMb}{\overline{\Bun}_{B(M)}}
\nc{\BunPbw}{{\widetilde{\Bun}_P}}
\nc{\BunBP}{\widetilde{\Bun}_{B,P}}
\nc{\GUb}{\overline{G/U}}
\nc{\GUPb}{\overline{G/U(P)}}
\nc\syminfty{\on{Sym}^{\infty}}
\nc\lal{\ol{\lambda}}
\nc\xl{\ol{x}}
\nc\thl{\ol{\theta}}
\nc\nul{\ol{\nu}}
\nc\mul{\ol{\mu}}
\nc{\oX}{\overset{\circ}{X}{}}
\nc{\hl}{\overset{\leftarrow}h{}}
\nc{\hr}{\overset{\rightarrow}h{}}
\nc{\M}{{\mathcal M}}
\nc{\N}{{\mathcal N}}
\nc{\F}{{\mathcal F}}
\nc{\D}{{\mathcal D}}
\nc{\Y}{{\mathcal Y}}
\nc{\G}{{\mathcal G}}
\nc{\E}{{\mathcal E}}
\nc{\CalC}{{\mathcal C}}
\nc\Dh{\widehat{\D}}
\renewcommand{\O}{{\mathcal O}}
\nc{\K}{{\mathcal K}}
\renewcommand{\S}{{\mathcal S}}
\nc{\T}{{\mathcal T}}
\nc{\V}{{\mathcal V}}
\renc{\P}{{\mathcal P}}
\nc{\A}{{\AA}}
\nc{\B}{{\BB}}
\nc{\U}{{\mathcal U}}
\renewcommand{\L}{{\mathcal L}}
\nc{\frn}{{\check{\mathfrak u}(P)}}
\nc{\fC}{\mathfrak C}
\nc\f{{\mathfrak f}}
\nc{\qo}{{\mathfrak q}}
\nc{\po}{{\mathfrak p}}
\nc{\s}{{\mathfrak s}}
\nc\w{\text{w}}
\renewcommand{\r}{{\mathfrak r}}
\nc\Spec{\on{Spec}}
\nc\Mod{\on{Mod}}
\nc{\tw}{\widetilde{\mathfrak t}}
\nc{\pw}{\widetilde{\mathfrak p}}
\nc{\qw}{\widetilde{\mathfrak q}}
\nc{\jw}{\widetilde j}
\nc{\grb}{\overline{\Gr_{X^{\fset}}}}
\nc{\I}{\mathcal I}
\renewcommand{\i}{\mathfrak i}
\renewcommand{\j}{\mathfrak j}
\nc{\lambdach}{{\check\lambda}}
\nc{\Lambdach}{{\check\Lambda}{}}
\nc{\much}{{\check\mu}}
\nc{\omegach}{{\check\omega}}
\nc{\nuch}{{\check\nu}}
\nc{\etach}{{\check\eta}}
\nc{\alphach}{{\check\alpha}}
\nc{\rhoch}{{\check\rho}}
\nc{\Hb}{\overline{\H}}
\nc{\BA}{{\mathbb{A}}}
\nc{\BB}{\mathbb{B}}
\nc{\BC}{{\mathbb{C}}}
\nc{\BD}{{\mathbb{D}}}
\nc{\BE}{{\mathbb{E}}}
\nc{\BF}{{\mathbb{F}}}
\nc{\BG}{{\mathbb{G}}}
\nc{\BH}{{\mathbb{H}}}
\nc{\BI}{{\mathbb{I}}}
\nc{\BM}{{\mathbb{M}}}
\nc{\BN}{{\mathbb{N}}}
\nc{\BO}{{\mathbb{O}}}
\nc{\BP}{{\mathbb{P}}}
\nc{\BQ}{{\mathbb{Q}}}
\nc{\BR}{{\mathbb{R}}}
\nc{\BS}{{\mathbb{S}}}
\nc{\BT}{{\mathbb{T}}}
\nc{\BV}{{\mathbb{V}}}
\nc{\BZ}{{\mathbb{Z}}}
\nc{\bbone}{\mathbbm{1}}
\nc{\bbA}{{\mathbb{A}}}
\nc{\bbB}{\mathbb{B}}
\nc{\bbC}{{\mathbb{C}}}
\nc{\bbD}{{\mathbb{D}}}
\nc{\bbE}{{\mathbb{E}}}
\nc{\bbF}{{\mathbb{F}}}
\nc{\bbG}{{\mathbb{G}}}
\nc{\bbH}{{\mathbb{H}}}
\nc{\bbI}{{\mathbb{I}}}
\nc{\bbL}{{\mathbb{L}}}
\nc{\bbM}{{\mathbb{M}}}
\nc{\bbN}{{\mathbb{N}}}
\nc{\bbO}{{\mathbb{O}}}
\nc{\bbP}{{\mathbb{P}}}
\nc{\bbQ}{{\mathbb{Q}}}
\nc{\bbR}{{\mathbb{R}}}
\nc{\bbS}{{\mathbb{S}}}
\nc{\bbT}{{\mathbb{T}}}
\nc{\bbU}{{\mathbb{U}}}
\nc{\bbV}{{\mathbb{V}}}
\nc{\bbW}{{\mathbb{W}}}
\nc{\bbX}{{\mathbb{X}}}
\nc{\bbY}{{\mathbb{Y}}}
\nc{\bbZ}{{\mathbb{Z}}}
\nc{\CA}{{\mathcal{A}}}
\nc{\CB}{{\mathcal{B}}}
\nc{\CE}{{\mathcal{E}}}
\nc{\CF}{{\mathcal{F}}}
\nc{\CH}{{\mathcal{H}}}
\nc{\CL}{{\mathcal{L}}}
\nc{\CC}{{\mathcal{C}}}
\nc{\CG}{{\mathcal{G}}}
\nc{\CM}{{\mathcal{M}}}
\nc{\CN}{{\mathcal{N}}}
\nc{\CK}{{\mathcal{K}}}
\nc{\CO}{{\mathcal{O}}}
\nc{\CP}{{\mathcal{P}}}
\nc{\CQ}{{\mathcal{Q}}}
\nc{\CR}{{\mathcal{R}}}
\nc{\CS}{{\mathcal{S}}}
\nc{\CU}{{\mathcal{U}}}
\nc{\CV}{{\mathcal{V}}}
\nc{\CW}{{\mathcal{W}}}
\nc{\CX}{{\mathcal{X}}}
\nc{\CY}{{\mathcal{Y}}}
\nc{\CZ}{{\mathcal{Z}}}
\nc{\CI}{{\mathcal{I}}}
\nc{\csM}{{\check{\mathcal A}}{}}
\nc{\oM}{{\overset{\circ}{\mathcal M}}{}}
\nc{\obM}{{\overset{\circ}{\mathbf M}}{}}
\nc{\oCA}{{\overset{\circ}{\mathcal A}}{}}
\nc{\obA}{{\overset{\circ}{\mathbf A}}{}}
\nc{\ooM}{{\overset{\circ}{M}}{}}
\nc{\osM}{{\overset{\circ}{\mathsf M}}{}}
\nc{\vM}{{\overset{\bullet}{\mathcal M}}{}}
\nc{\nM}{{\underset{\bullet}{\mathcal M}}{}}
\nc{\oD}{{\overset{\circ}{\mathcal D}}{}}
\nc{\obC}{{\overset{\circ}{\mathbf C}}{}}
\nc{\obD}{{\overset{\circ}{\mathbf D}}{}}
\nc{\oA}{{\overset{\circ}{\mathbb A}}{}}
\nc{\op}{{\overset{\bullet}{\mathbf p}}{}}
\nc{\oU}{{\overset{\bullet}{\mathcal U}}{}}
\nc{\oZ}{{\overset{\circ}{\mathcal Z}}{}}
\nc{\ofZ}{{\overset{\circ}{\mathfrak Z}}{}}
\nc{\oF}{{\overset{\circ}{\fF}}}
\nc{\fa}{{\mathfrak{a}}}
\nc{\fb}{{\mathfrak{b}}}
\nc{\fc}{{\mathfrak{c}}}
\nc{\fd}{{\mathfrak{d}}}
\nc{\ff}{{\mathfrak{f}}}
\nc{\fg}{{\mathfrak{g}}}
\nc{\fgl}{{\mathfrak{gl}}}
\nc{\fh}{{\mathfrak{h}}}
\nc{\fj}{{\mathfrak{j}}}
\nc{\fl}{{\mathfrak{l}}}
\nc{\fm}{{\mathfrak{m}}}
\nc{\fn}{{\mathfrak{n}}}
\nc{\fu}{{\mathfrak{u}}}
\nc{\fp}{{\mathfrak{p}}}
\nc{\fr}{{\mathfrak{r}}}
\nc{\fs}{{\mathfrak{s}}}
\nc{\ft}{{\mathfrak{t}}}
\nc{\fz}{{\mathfrak{z}}}
\nc{\fsl}{{\mathfrak{sl}}}
\nc{\hsl}{{\widehat{\mathfrak{sl}}}}
\nc{\hgl}{{\widehat{\mathfrak{gl}}}}
\nc{\hg}{{\widehat{\mathfrak{g}}}}
\nc{\chg}{{\widehat{\mathfrak{g}}}{}^\vee}
\nc{\hn}{{\widehat{\mathfrak{n}}}}
\nc{\chn}{{\widehat{\mathfrak{n}}}{}^\vee}
\nc{\fA}{{\mathfrak{A}}}
\nc{\fB}{{\mathfrak{B}}}
\nc{\fD}{{\mathfrak{D}}}
\nc{\fE}{{\mathfrak{E}}}
\nc{\fF}{{\mathfrak{F}}}
\nc{\fG}{{\mathfrak{G}}}
\nc{\fK}{{\mathfrak{K}}}
\nc{\fL}{{\mathfrak{L}}}
\nc{\fM}{{\mathfrak{M}}}
\nc{\fN}{{\mathfrak{N}}}
\nc{\fP}{{\mathfrak{P}}}
\nc{\fU}{{\mathfrak{U}}}
\nc{\fV}{{\mathfrak{V}}}
\nc{\fX}{{\mathfrak{X}}}
\nc{\fY}{{\mathfrak{Y}}}
\nc{\fZ}{{\mathfrak{Z}}}
\nc{\bb}{{\mathbf{b}}}
\nc{\bc}{{\mathbf{c}}}
\nc{\bd}{{\mathbf{d}}}
\nc{\bbf}{{\mathbf{f}}}
\nc{\be}{{\mathbf{e}}}
\nc{\bg}{{\mathbf{g}}}
\nc{\bi}{{\mathbf{i}}}
\nc{\bj}{{\mathbf{j}}}
\nc{\bn}{{\mathbf{n}}}
\nc{\bo}{{\mathbf{o}}}
\nc{\bp}{{\mathbf{p}}}
\nc{\bq}{{\mathbf{q}}}
\nc{\bt}{{\mathbf{t}}}
\nc{\bu}{{\mathbf{u}}}
\nc{\bv}{{\mathbf{v}}}
\nc{\bx}{{\mathbf{x}}}
\nc{\bs}{{\mathbf{s}}}
\nc{\by}{{\mathbf{y}}}
\nc{\bw}{{\mathbf{w}}}
\nc{\bA}{{\mathbf{A}}}
\nc{\bK}{{\mathbf{K}}}
\nc{\bB}{{\mathbf{B}}}
\nc{\bC}{{\mathbf{C}}}
\nc{\bG}{{\mathbf{G}}}
\nc{\bD}{{\mathbf{D}}}
\nc{\bH}{{\mathbf{H}}}
\nc{\bM}{{\mathbf{M}}}
\nc{\bN}{{\mathbf{N}}}
\nc{\bO}{{\mathbf{O}}}
\nc{\bT}{{\mathbf{T}}}
\nc{\bV}{{\mathbf{V}}}
\nc{\bW}{{\mathbf{W}}}
\nc{\bX}{{\mathbf{X}}}
\nc{\bZ}{{\mathbf{Z}}}
\nc{\bS}{{\mathbf{S}}}
\nc{\sA}{{\mathsf{A}}}
\nc{\sB}{{\mathsf{B}}}
\nc{\sC}{{\mathsf{C}}}
\nc{\sD}{{\mathsf{D}}}
\nc{\sF}{{\mathsf{F}}}
\nc{\sG}{{\mathsf{G}}}
\nc{\sK}{{\mathsf{K}}}
\nc{\sM}{{\mathsf{M}}}
\nc{\sO}{{\mathsf{O}}}
\nc{\sW}{{\mathsf{W}}}
\nc{\sQ}{{\mathsf{Q}}}
\nc{\sP}{{\mathsf{P}}}
\nc{\sV}{{\mathsf{V}}}
\nc{\sS}{{\mathsf{S}}}
\nc{\sT}{{\mathsf{T}}}
\nc{\sZ}{{\mathsf{Z}}}
\nc{\sfp}{{\mathsf{p}}}
\nc{\sll}{{\mathsf{l}}}
\nc{\sr}{{\mathsf{r}}}
\nc{\bk}{{\mathsf{k}}}
\nc{\sg}{{\mathsf{g}}}
\nc{\sff}{{\mathsf{f}}}
\nc{\sfb}{{\mathsf{b}}}
\nc{\sfc}{{\mathsf{c}}}
\nc{\sd}{{\mathsf{d}}}
\nc{\se}{{\mathsf{e}}}
\nc{\BK}{{\bar{K}}}
\nc{\tA}{{\widetilde{\mathbf{A}}}}
\nc{\tB}{{\widetilde{\mathcal{B}}}}
\nc{\tg}{{\widetilde{\mathfrak{g}}}}
\nc{\tG}{{\widetilde{G}}}
\nc{\TM}{{\widetilde{\mathbb{M}}}{}}
\nc{\tO}{{\widetilde{\mathsf{O}}}{}}
\nc{\tU}{{\widetilde{\mathfrak{U}}}{}}
\nc{\TZ}{{\tilde{Z}}}
\nc{\tx}{{\tilde{x}}}
\nc{\tbv}{{\tilde{\bv}}}
\nc{\tfP}{{\widetilde{\mathfrak{P}}}{}}
\nc{\tz}{{\tilde{\zeta}}}
\nc{\tmu}{{\tilde{\mu}}}
\nc{\urho}{\underline{\rho}}
\nc{\uB}{\underline{B}}
\nc{\uC}{{\underline{\mathbb{C}}}}
\nc{\ui}{\underline{i}}
\nc{\uj}{\underline{j}}
\nc{\ofP}{{\overline{\mathfrak{P}}}}
\nc{\oB}{{\overline{\mathcal{B}}}}
\nc{\og}{{\overline{\mathfrak{g}}}}
\nc{\oI}{{\overline{I}}}
\nc{\eps}{\varepsilon}
\nc{\hrho}{{\hat{\rho}}}
\nc{\one}{{\mathbf{1}}}
\nc{\two}{{\mathbf{t}}}
\nc{\Rep}{{\mathop{\operatorname{\rm Rep}}}}
\nc{\Tot}{{\mathop{\operatorname{\rm Tot}}}}
\nc{\Ker}{{\mathop{\operatorname{\rm Ker}}}}
\nc{\Hilb}{{\mathop{\operatorname{\rm Hilb}}}}
\nc{\Ext}{{\mathop{\operatorname{\rm Ext}}}}
\nc{\CHom}{{\mathop{\operatorname{{\mathcal{H}}\it om}}}}
\nc{\GL}{{\mathop{\operatorname{\rm GL}}}}
\nc{\gr}{{\mathop{\operatorname{\rm gr}}}}
\nc{\Id}{{\mathop{\operatorname{\rm Id}}}}
\nc{\de}{{\mathop{\operatorname{\rm def}}}}
\nc{\length}{{\mathop{\operatorname{\rm length}}}}
\nc{\supp}{{\mathop{\operatorname{\rm supp}}}}
\nc{\Cliff}{{\mathsf{Cliff}}}
\nc{\Fl}{\on{Fl}}
\nc{\Fib}{{\mathsf{Fib}}}
\nc{\Coh}{{\on{Coh}}}
\nc{\QCoh}{{\on{QCoh}}}
\nc{\IndCoh}{{\on{IndCoh}}}
\nc{\FCoh}{{\mathsf{FCoh}}}
\nc{\reg}{{\text{\rm reg}}}
\nc{\cplus}{{\mathbf{C}_+}}
\nc{\cminus}{{\mathbf{C}_-}}
\nc{\cthree}{{\mathbf{C}_*}}
\nc{\Qbar}{{\bar{Q}}}
\nc\Eis{\on{Eis}}
\nc\CT{\on{CT}}
\nc\Eisb{\ol\Eis{}}
\nc\Eisr{\on{Eis}^{rat}{}}
\nc\wh{\widehat}
\nc{\Def}{\on{Def_{\check{\fb}}(E)}}
\nc{\barZ}{\overline{Z}{}}
\nc{\barbarZ}{\overline{\barZ}{}}
\nc{\barpi}{\overline\pi}
\nc{\barbarpi}{\overline\barpi}
\nc{\barpip}{\overline\pi{}^+}
\nc{\barpim}{\overline\pi{}^-}
\nc{\fq}{\mathfrak q}
\nc{\fqb}{\ol{\fq}{}}
\nc{\fpb}{\ol{\fp}{}}
\nc{\fpr}{{\fp^{rat}}{}}
\nc{\fqr}{{\fq^{rat}}{}}
\nc{\hattimes}{\wh\otimes}
\nc{\bh}{{\bar{h}}}
\nc{\bOmega}{{\overline{\Omega(\check \fn)}}}
\nc{\seq}[1]{\stackrel{#1}{\sim}}
\nc{\cT}{{\check{T}}}
\nc{\cG}{{\check{G}}}
\nc{\cM}{{\check{M}}}
\nc{\cB}{{\check{B}}}
\nc{\cP}{{\check{P}}}
\nc{\ct}{{\check{\mathfrak t}}}
\nc{\cg}{{\check{\fg}}}
\nc{\cb}{{\check{\fb}}}
\nc{\cn}{{\check{\fn}}}
\nc{\cp}{{\check{\fp}}}
\nc{\cm}{{\check{\fm}}}
\nc{\cLambda}{{\check\Lambda}}
\nc{\cla}{{\check\lambda}}
\nc{\cmu}{{\check\mu}}
\nc{\cnu}{{\check\nu}}
\nc{\ceta}{{\check\eta}}
\nc{\DefbE}{{\on{Def}_{\cB}(E_\cT)}}
\nc{\imathb}{{\ol{\imath}}}
\nc{\rlr}{\overset{\longrightarrow}{\underset{\longrightarrow}\longleftarrow}}
\nc{\oBun}{\overset{\circ}\Bun}
\nc{\BunBbb}{\ol{\ol{Bun}}_B}
\nc{\BunBr}{\Bun_B^{rat}}
\nc{\BunBrsg}{\Bun_B^{rat,\on{s.g.}}}
\nc{\BunBrp}{\Bun_B^{rat,polar}}
\nc{\BunBrpbg}{\Bun_B^{rat,polar,\on{b.g.}}}
\nc{\BunBrpsg}{\Bun_B^{rat,polar,\on{s.g.}}}
\nc{\BunTrp}{\Bun_T^{rat,polar}}
\nc{\BunTrpbg}{\Bun_T^{rat,polar,\on{b.g.}}}
\nc{\BunTrpsg}{\Bun_T^{rat,polar,\on{s.g.}}}
\nc{\BunNr}{\Bun_N^{rat}}
\nc{\BunNre}{\Bun_N^{enh,rat}}
\nc{\BunTr}{\Bun_T^{rat}}
\nc{\Vect}{\on{Vect}}
\nc{\Whit}{\on{Whit}}
\nc{\bTb}{\ol{\on{CT}}}
\nc{\bTr}{\on{CT}^{rat}{}}
\nc\jmathr{\jmath^{rat}{}}
\nc{\ux}{\underline{x}}
\nc{\clambda}{{\check\lambda}}
\nc{\calpha}{{\check\alpha}}
\nc{\inftyGrpd}{{\mathsf{Grpd}_\infty}}
\nc{\fset}{\mathsf{fSet}}
\nc{\LocSysG}{\LocSys_{\cG}}
\nc{\Sing}{{\on{Sing}}}
\nc{\dr}{{\on{dR}}}
\nc{\Ind}{\on{Ind}}
\nc{\Sat}{\on{Sat}}
\nc{\Ho}{\on{Ho}}
\nc{\Res}{\on{Res}}
\nc{\sotimes}{\overset{!}\otimes}
\nc{\mmod}{{\on{-}}{\mathbf{mod}}}
\nc{\Maps}{\on{Maps}}
\nc{\CMaps}{{\mathcal Maps}}
\nc{\bMaps}{{\mathbf{Maps}}}
\nc{\dgSch}{\on{DGSch}}
\nc{\dgindSch}{\on{DGindSch}}
\nc{\indSch}{\on{indSch}}
\nc{\Sch}{\mathsf{Sch}}
\nc{\affdgSch}{\on{DGSch}^{\on{aff}}}
\nc{\affSch}{\on{Sch}^{\on{aff}}}
\nc{\Groupoids}{\on{Grpd}}
\nc{\inftypic}{\infty\on{-PicGrpd}}
\nc{\inftyCat}{{\mathsf{Cat}_{\infty}}}
\nc{\MoninftyCat}{\infty\on{-Cat}^{Mon}}
\nc{\SymMoninftyCat}{\infty\on{-Cat}^{\on{SymMon}}}
\nc{\SymMonStinftyCat}{\on{DGCat}^{\on{SymMon}}}
\nc{\MonStinftyCat}{\on{DGCat}^{Mon}}
\nc{\inftystack}{\on{Stk}}
\nc{\inftystackalg}{Stk^{1\text{-}alg}}
\nc{\inftyprestack}{\on{PreStk}}
\nc{\inftydgnearstack}{\on{NearStk}}
\nc{\inftydgstack}{\on{Stk}}
\nc{\inftydgstackalg}{DGStk^{1\text{-}alg}}
\nc{\inftydgprestack}{\on{PreStk}}
\nc{\HC}{\CH\bC}
\nc{\csupp}{\supp}
\nc{\Arth}{\on{Arth}}
\nc{\ArthG}{{\on{Arth}_\cG}}
\nc{\ul}{\underline}
\nc{\Z}{\mathcal{Z}}
\nc{\calN}{\N}
\nc{\calW}{\mathcal{W}}
\nc{\calF}{\mathcal{F}}
\nc{\calH}{\mathcal{H}}
\nc{\calO}{\mathcal{O}}
\nc{\calK}{\mathcal{K}}
\nc{\Ran}{\mathsf{Ran}}
\nc{\Jets}{\on{Jets}}
\nc{\act}{\mathsf{act}}
\nc{\Av}{\mathsf{Av}}
\nc{\Ad}{\on{Ad}}
\nc{\BGRan}{BG_{\Ran}}
\nc{\colim}{\on{colim}}
\nc{\codim}{\on{codim}}
\nc{\cpt}{{\on{cpt}}}
\nc{\dR}{{\on{dR}}}
\nc{\DGCat}{\mathsf{DGCat}}
\nc{\DGCatcont}{\on{DGCat}_{cont}}
\nc{\glob}{{\on{glob}}}
\nc{\loc}{{\on{loc}}}
\renewcommand{\op}{{\on{op}}}
\nc{\pt}{{\on{pt}}}
\nc{\PreStk}{{\mathsf{PreStk}}}
\nc{\Cat}{{\mathsf{Cat}}}
\nc{\ShvCat}{{\mathsf{ShvCat}}}
\nc{\restr}[2]{\left. #1 \right |_{#2}}
\nc{\uprestr}[2]{\left. #1 \right |^{#2}}
\nc{\bLoc}{{\mathbf{Loc}}}
\nc{\bGamma}{{\mathbf{\Gamma}}}
\nc{\bLocA}{\mathbf{Loc}^\A}
\nc{\bGammaA}{\mathbf{\Gamma}^\A}
\nc{\bLocB}{\mathbf{Loc}^\B}
\nc{\bGammaB}{\mathbf{\Gamma}^\B}
\nc{\bLocH}{\mathbf{Loc}^\H}
\nc{\bGammaH}{\mathbf{\Gamma}^\H}
\nc{\gen}{\mathsf{gen}}
\nc{\hto}{\hookrightarrow}
\nc{\ext}{\mathsf{ext}}
\nc{\ev}{\mathsf{ev}}
\nc{\rat}{\mathsf{rat}}
\nc{\usotimes}[1]{\underset{#1}{\otimes}}
\nc{\ustimes}[1]{\underset{#1}{\times}}
\nc{\uscolim}[1]{\underset{#1}{\colim}}
\nc{\ch}{{\mathfrak{ch}}}
\renc{\fD}{{\Dmod}}
\nc{\fH}{{\mathfrak{H}}}
\nc{\p}{{\mathfrak{p}}}
\renc{\r}{{\mathfrak{r}}}
\nc{\xto}{\xrightarrow}
\renc{\sec}{\section}
\nc{\enh}{{\on{enh}}}
\renc{\gen}{\mathsf{gen}}
\nc{\BunGBgen}{\Bun_G^{B-\gen}}
\nc{\BunGHgen}{\Bun_G^{H-\gen}}
\nc{\BunGNgen}{\Bun_G^{N-\gen}}
\nc{\Fun}{\mathsf{Fun}}
\nc{\End}{\mathsf{End}}
\nc{\lr}{\xymatrix{ \ar@<-0.4ex>[r] \ar@<.5ex>[l]  & } }
\nc{\rr}{\xymatrix{ \ar@<-0.2ex>[r] \ar@<.7ex>[r]  & } }
\nc{\rrr}{\xymatrix{ \ar@<.0ex>[r] \ar@<.7ex>[r] \ar@<-0.7ex>[r] & } }
\nc{\Stab}{\mathsf{Stab}}
\nc{\Orb}{\mathsf{Orb}}
\renc{\exp}{\mathit{exp}}
\renc{\q}{\mathfrak{q}}
\nc{\virg}[1]{``#1"}
\nc{\QA}[2]
{\textbf{Question:} {#1} 
\\
\textbf{Answer:} {#2}}
\renc{\bold}[1]{\boldsymbol{#1}}
\nc{\bigt}[1]{\big( #1 \big) }
\nc{\Bigt}[1]{\Big( #1 \Big) }
\nc{\extwhit}{{\CW h}(G,\mathsf{ext})}
\nc{\footcite}{\footnote}
\nc{\GA}{{G(\AA)}}
\nc{\GO}{{G(\OO)}}
\nc{\Shv}{\mathsf{Shv}}
\nc{\inc}{\mathsf{inc}}
\nc{\Par}{\mathsf{Par}}
\renc{\i}{\mathfrak{i}}
\nc{\NA}{N(\AA)}
\nc{\VA}{V(\AA)}
\nc{\Glue}{\mathsf{Glue}}
\nc{\laxlim}{\text{laxlim}}
\nc{\FT}{\mathsf{FT}}
\nc{\out}{\mathsf{out}}
\nc{\hol}{\mathsf{hol}}
\nc{\Hol}{\on{Hol}}
\nc{\add}{\mathsf{add}}
\nc{\sto}{\rightsquigarrow}
\nc{\squigto}{\rightsquigarrow}
\nc{\fW}{\mathfrak{W}}
\nc{\vrho}{\varrho}
\nc{\counit}{\mathsf{counit}}
\nc{\unit}{\mathsf{unit}}
\nc{\corr}{\mathsf{corr}}
\nc{\Corr}{\mathsf{Corr}}
\nc{\IndSch}{\mathsf{IndSch}}
\nc{\Tate}{{\mathsf{Tate}}}
\nc{\surjto}{\twoheadrightarrow}
\renc{\j}{\mathfrak{j}}
\nc{\J}{\mathcal{J}}
\nc{\pro}{\mathsf{pro}}
\nc{\fty}{\mathsf{ft}}
\nc{\Pro}{\mathsf{Pro}}
\nc{\coact}{\mathsf{coact}}
\nc{\aff}{\mathsf{aff}}
\nc{\Nilp}{\on{Nilp}}
\nc{\Gch}{{\check{G}}}
\nc{\Pch}{{\check{P}}}
\nc{\Mch}{{\check{M}}}
\nc{\Qch}{{\check{Q}}}
\nc{\LL}{\mathbb{L}}
\nc{\LS}{{\on{LS}}}
\nc{\x}{\varkappa} 
\nc{\Otimes}{\boldsymbol{\otimes}}
\nc{\Times}{\boldsymbol{\times}}
\nc{\flip}{\text{<}}
\nc{\coeffRan}{\mathsf{coeff}^{\Ran}}
\nc{\Ha}{H(\sA)}
\nc{\Groups}{\mathsf{Groups}}
\nc{\Groth}{\mathsf{Groth}}
\nc{\rlto}{\rightleftarrows}
\nc{\DGCatRan}{\ShvCatCrys(\Ran)}
\nc{\longto}{\longrightarrow}
\renc{\Jets}{\mathsf{Jets}}
\nc{\mer}{\mathsf{mer}}
\nc{\W}{\mathcal{W}}
\nc{\Sect}{\mathsf{Sect}}
\renc{\Maps}{\mathsf{Maps}}
\renc{\bf}{\mathbf{f}}
\nc{\y}{\mathtt{y}}
\renc{\x}{\mathtt{x}}
\nc{\un}{{\it un}}
\nc{\indep}{\mathsf{indep}}
\nc{\CoAlg}{\mathsf{CoAlg}}
\nc{\coeff}{\mathsf{coeff}}
\nc{\R}{\mathcal{R}}
\renc{\hat}{\widehat}
\nc{\TK}{T(\mathsf{K})} 
\nc{\TtKK}{\Tt(\mathpzc{K})} 
\nc{\TtK}{\Tt(\mathsf{K})} 
\nc{\KK}{\mathpzc{K}}
\nc{\Dmod}{\mathfrak{D}}
\nc{\curs}[1]{\mathpzc{#1}}
\nc{\Bshv}{\bold{\B}}
\nc{\Bind}{\H_{\indep}}
\nc{\BRan}{\H_{\Ran}}
\nc{\ARan}{\A_{\Ran}}
\nc{\Aind}{\A_{\indep}}
\nc{\GrRan}{\Gr}
\nc{\Gr}{\mathsf{Gr}}
\nc{\GrGRan}{\Gr_{G}}
\nc{\GrGind}{\Gr_{G}^{\indep}}
\nc{\Grind}[1]{\Gr_{#1}^{\indep} }
\nc{\GrGdom}{\curs{Gr}_G}
\nc{\GMapsRan}[1]{\mathsf{GMaps}(X,{#1})}
\nc{\GSectRan}[1]{\mathsf{GSect}({#1}/X)}
\nc{\GMapsind}[1]{\mathsf{GMaps}(X,{#1})^\indep}
\nc{\GSectind}[1]{\mathsf{GSect}({#1}/X)^\indep}
\nc{\GMapsdom}[1]{\curs{GMaps}(X,{#1})}
\nc{\GSectdom}[1]{\curs{GSect}({#1}/X)}
\nc{\chind}{\ch^{\indep}}
\nc{\chdom}{\curs{ch}}
\nc{\QSect}[1]{\curs{QSect}(#1/X)} 
\nc{\QMaps}[1]{\curs{QMaps}(X,#1)} 
\nc{\Zar}{\mathit{Zar}}
\nc{\loccit}{\textit{loc.$\,$cit.}}
\nc{\Crys}{\on{Crys}}
\nc{\ShvCatCrys}{\ShvCat^{\Crys}}
\nc{\BPE}{{\BP E}}
\nc{\BVE}{{\BV E}}
\nc{\BBE}{{\BB E}}
\nc{\Wh}{{{\CW}h}}
\nc{\ChiralCat}{\mathsf{ChiralCat}}
\nc{\RRep}{\mathfrak{R}ep}
\nc{\SSph}{\mathfrak{S}ph}
\nc{\tto}{\twoheadrightarrow}
\nc{\disj}{{\mathsf{disj}}}
\nc{\C}{\CC}
\nc{\Tch}{{\check{T}}}
\nc{\good}{\mathsf{good}}
\nc{\triv}{\mathsf{triv}}
\nc{\Alg}{\mathsf{Alg}}
\nc{\CAlg}{\mathsf{CAlg}}
\nc{\Spread}{\mathsf{Spread}}
\nc{\Dom}{\mathsf{Dom}}
\nc{\Jac}{\on{Jac}}
\renc{\CD}[1]{{#1}^{\on{CD}}}
\nc{\String}{\on{String}}
\renc{\min}{{\mathit{min}}}
\nc{\rrep}{\on-\!\mathbf{rep}}
\nc{\WWh}{\mathfrak{W}h}
\nc{\Grpd}{\mathsf{Grpd}}
\nc{\timesdisj}{\overset{\circ}\times}
\renc{\NA}{N(\sA)}
\nc{\chiral}{\mathsf{chiral}}
\nc{\Hopf}{\mathsf{Hopf}}
\nc{\heart}{\heartsuit}
\nc{\kk}{\mathbbm{k}} 
\nc{\HHom}{\CH{om}} 
\nc{\Cone}{\on{Cone}}
\nc{\EE}{\mathbb{E}}
\renc{\HC}{{\on{HC}}}
\nc{\HH}{{\on{HH}}}
\nc{\even}{{\on{even}}}
\nc{\SingSupp}{\on{SingSupp}}
\nc{\Supp}{\on{Supp}}
\nc{\temp}{{\on{temp}}}
\nc{\cusp}{{\on{cusp}}}
\nc{\geom}{{\on{geom}}}
\nc{\ren}{{\on{ren}}}
\nc{\naive}{{\on{naive}}}
\nc{\nnaive}{{\on{-naive}}}
\nc{\spec}{{\on{spec}}}
\nc{\gch}{\mathfrak{\check{g}}}
\nc{\Hecke}{\on{Hecke}}
\nc{\LSGch}{{\LS_\Gch}}
\nc{\LSPch}{{\LS_\Pch}}
\nc{\LSMch}{{\LS_\Mch}}
\nc{\Hsx}[2]{\H_{{#1} \leftarrow {#2}}}
\nc{\Hdx}[2]{\H_{{#1} \to {#2}}}
\nc{\Hcorr}[3]{ \H_{{#1} \leftarrow {#2} \to {#3}} }
\nc{\Hopcorr}[3]{ \H_{{#1} \to {#2} \leftto {#3}} }
\nc{\ICohsx}[2]{\ICohW_{{#1} \leftarrow {#2}}}
\nc{\ICohdx}[2]{\ICohW_{{#1} \to {#2}}}
\nc{\ICohcorr}[3]{ \ICohW_{{#1} \leftarrow {#2} \to {#3}} }
\nc{\ICohopcorr}[3]{ \ICohW_{{#1} \to {#2} \leftto {#3}} }
\nc{\QCohsx}[2]{\QCohW_{{#1} \leftarrow {#2}}}
\nc{\QCohdx}[2]{\QCohW_{{#1} \to {#2}}}
\nc{\QCohcorr}[3]{ \QCohW_{{#1} \leftarrow {#2} \to {#3}} }
\nc{\QCohopcorr}[3]{ \QCohW_{{#1} \to {#2} \leftto {#3}} }
\renc{\AA}{\bbA}
\nc{\Asx}[2]{\AA_{{#1} \leftarrow {#2}}}
\nc{\Adx}[2]{\AA_{{#1} \to {#2}}}
\nc{\Acorr}[3]{ \AA_{{#1} \leftarrow {#2} \to {#3}} }
\nc{\Aopcorr}[3]{ \AA_{{#1} \to {#2} \leftto {#3}} }
\nc{\Bsx}[2]{\B_{{#1} \leftarrow {#2}}}
\nc{\Bdx}[2]{\B_{{#1} \to {#2}}}
\nc{\Bcorr}[3]{ \B_{{#1} \leftarrow {#2} \to {#3}} }
\nc{\Bopcorr}[3]{ \B_{{#1} \to {#2} \leftto {#3}} }
\nc{\ICohzero}[3]{\ICoh_0 \bigt{#1 \times_{{#2}_\dR} #3}}
\nc{\IndCohzero}{\ICohzero}
\nc{\form}[3]{#1 \times_{{#2}_\dR} #3 }
\nc{\ind}{{\mathsf{ind}}}
\nc{\oblv}{{\mathsf{oblv}}}
\nc{\Aff}{\mathsf{Aff}}
\nc{\dgAff}{\Aff}
\nc{\deloop}{\mathsf{deloop}}
\renc{\loop}{\mathsf{loop}}
\nc{\coev}{\mathsf{coev}}
\nc{\bE}{\mathbf{E}}
\nc{\ShvCatH}{{\ShvCat^{\bbH}}}
\nc{\ShvCatQW}{\ShvCat^{\QCohW}}
\nc{\bbimod}{\on{-}\mathbf{bimod}}
\nc{\Tw}{\mathsf{Tw}}
\nc{\Arr}{\mathsf{Arr}}
\nc{\bDelta}{\bold\Delta}
\nc{\BiCat}{\mathsf{BiCat}}
\nc{\Seg}{\mathsf{Seg}}
\nc{\Cart}{\mathsf{Cart}}
\nc{\Bimod}{\mathsf{Bimod}}
\nc{\lax}{\mathit{lax}}
\nc{\pr}{\mathsf{pr}}
\nc{\zero}{ \{ 0 \}   }
\nc{\Perf}{\on{Perf}}
\nc{\leftto}{\leftarrow}
\nc{\lto}{\leftto}
\nc{\xlto}[1]{\xleftarrow{#1}}
\nc{\ltemp}{{}^\temp}
\nc{\lcusp}{{}^\cusp}
\nc{\TwCorr}{\mathsf{TwCorr}}
\nc{\Affover}[1]{{\Aff_{/#1}}}
\nc{\Affoverop}[1]{{( \Affover{#1})^\op}}
\nc{\AffOver}[2]{{(\Aff_{#1})_{/#2}}}
\nc{\AffOverop}[2]{{( \AffOver{#1}{#2})^\op}}
\nc{\aft}{{\mathit{aft}}}
\renc{\vert}{{\mathit{vert}}}
\nc{\horiz}{{\mathit{horiz}}}
\nc{\type}{{\mathit{type}}}
\nc{\adm}{{\mathit{adm}}}
\nc{\g}{\mathfrak{g}}
\nc{\free}{\mathsf{free}}
\nc{\Sform}{{S \times_{S_\dR} S}}
\nc{\Yform}{{\Y \times_{\Y_\dR} \Y}}
\nc{\SdR}{ {S_{\dR}}}
\nc{\laft}{{\mathit{laft}}}
\nc{\Affevcocaft}{\Aff_{\aft}^{< \infty}}
\nc{\Affaftevcoc}{\Aff_{\aft}^{< \infty}}
\nc{\Affevcoclfp}{\Aff_{\lfp}^{< \infty}}
\nc{\Schevcoclfp }{\Sch_{\lfp}^{< \infty}}
\nc{\Schevcocaft}{\Sch_{\aft}^{< \infty}}
\nc{\Schaftevcoc}{\Sch_{\aft}^{< \infty}}
\nc{\Stkevcoc}{\Stk^{< \infty}}
\nc{\Stkevcoclfp}{\Stk_{\lfp}^{< \infty}}
\nc{\Stkperfevcoclfp}{\Stk_{\mathit{perf},\lfp}^{< \infty}}
\nc{\Stkperflfp}{\Stk_{\mathit{perf},\lfp}}
\nc{\Stklfp}{\Stk_{\lfp}}
\nc{\evcoc}{\mathit{e.c.}}
\nc{\ICoh}{\IndCoh}
\nc{\citep}{\cite}
\renc{\H}{\bbH}
\nc{\uno}{\mathbbm{1}}
\nc{\CohBig}{{\Coh^{-\infty}}}
\nc{\Tang}{\mathbb{T}}
\nc{\LieAlg}{\mathsf{LieAlg}}
\nc{\Serre}{{\on{Serre}}}
\nc{\MPreStk}{\mathsf{MPreStk}}
\nc{\all}{{\on{all}}}
\nc{\QCohwedge}{\bbQ^\wedge}
\nc{\ICohwedge}{\bbI^\wedge}
\nc{\ICohW}{\ICohwedge}
\nc{\QCohW}{\QCohwedge}
\nc{\ShvCatA}{\ShvCat^{\AA}}
\nc{\ShvCatB}{{\ShvCat^\B}}
\nc{\naiveto}{{\xto{\naive}}}
\nc{\conaiveto}{{\xto{\conaive}}}
\nc{\strong}{\mathit{strong}}
\nc{\costrong}{\mathit{costrong}}
\nc{\conv}{\mathit{conv}}
\nc{\Q}{\bbQ}
\nc{\bY}{\mathbf{Y}}
\nc{\Loop}{\mathsf{LOOP}}
\nc{\DG}{{\on{DG}}}
\nc{\coind}{\mathsf{coind}}
\nc{\co}{{\on{co}}}
\nc{\laftdef}{{\mathit{laft-def}}}
\nc{\qsmooth}{{\mathit{qs.smooth}}}
\nc{\smooth}{{\mathit{smooth}}}
\nc{\LKE}{\on{LKE}}
\nc{\RKE}{\on{RKE}}
\nc{\ShvCatAco}{\ShvCatA_{\co}}
\nc{\ShvCatHco}{\ShvCatH_{\co}}
\nc{\Stk}{\mathsf{Stk}}
\nc{\doubleCat}{\mathsf{doubleCat}}
\nc{\Spaces}{\mathcal{S}\!\mathit{paces}}
\nc{\ALG}{\mathsf{ALG}}
\nc{\MAPS}{\mathsf{MAPS}}
\nc{\CAT}{\mathsf{CAT}}
\nc{\oneCat}{{\Cat_{\1}}}
\nc{\oneCAT}{{\CAT_{\1}}}
\nc{\twoCat}{{\Cat_{\2}}}
\nc{\twoCAT}{{\CAT_{\2}}}
\nc{\DGCAT}{\mathsf{DGCAT}}
\nc{\twoCatDG}{{\CAT_{\2}^\DG}}
\nc{\twoCATDG}{{\CAT_{\2}^\DG}}
\nc{\twoCATDGw}{{\CAT_{\2, w*}^\DG}}
\nc{\twoCATDGww}{{\CAT_{\2, ww*}^\DG}}
\nc{\AlgBimod}{\Alg^{\mathit{bimod}}}
\nc{\AlgBimodDGCat}{\AlgBimod(\DGCat)}
\nc{\ALGBimod}{\ALG^{\mathit{bimod}}}
\nc{\twoAlgBimod}{\ALGBimod}
\nc{\rev}{{\on{rev}}}
\nc{\lfp}{{\mathit{lfp}}}
\nc{\RBeck}{{\on{R-BC}}}
\nc{\LBeck}{{\on{L-BC}}}
\nc{\schem}{\mathit{schem}}
\nc{\proper}{\mathit{proper}}
\nc{\res}{{\mathit{res}}}
\nc{\UQCoh}{\U^{\QCoh}}
\nc{\UQ}{\UQCoh}
\nc{\LieAlgbd}{{\on{Lie-algbd}}}
\nc{\LY}{{L\Y}}
\nc{\TangQ}{\Tang^{\QCoh}}
\nc{\Fil}{{\on{Fil}}}
\nc{\AssGr}{\on{assoc-gr}}
\nc{\Cech}{\on{Cech}}
\nc{\FormMod}{\mathsf{FormMod}}
\nc{\FormModunderStkevcoc} {\FormMod_{\Stk^{< \infty}/}^\lfp }
\nc{\vDmod}{\virg{\Dmod}}
\nc{\LSG}{\LS_G}
\nc{\LSM}{\LS_M}
\nc{\LSP}{\LS_P}
\nc{\LST}{\LS_T}
\nc{\LSB}{\LS_B}
\nc{\Gm}{{\GG_m}}
\nc{\GIT}{/\!/}
\renc{\t}{\ft}
\nc{\PP}{\bbP}
\nc{\Nglob}{\check{\N}_\glob}
\nc{\Psid}{\on{Ps-Id}}
\nc{\PsId}{\Psid}
\nc{\unshift}{\Rightarrow}
\nc{\coker}{\on{cone}}
\nc{\irred}{{\on{irred}}}
\nc{\red}{{\on{red}}}
\nc{\Spr}{{\on{Spr}}}
\nc{\DL}{\mathsf{DL}}
\nc{\St}{{\on{St}}}
\nc{\Glued}{{\mathsf{Glued}}}
\nc{\LSGchLeviirred}{\LS_\Gch^{\on{Levi-irred}}}
\nc{\LSLeviirred}{\LS_G^{\on{Levi-irred}}}
\nc{\olBun}{\ol{\Bun}}
\nc{\cl}{\on{cl}}
\nc{\starext}{{*\on{-ext}}}
\nc{\RHom}{\CH \!\on{om}}
\nc{\Poinc}{\on{Poinc}}
\nc{\Betti}{\on{Betti}}
\nc{\LSGcoarse}{\LS_{G,\on{coarse}}}
\nc{\mon}{{\Gm\on{-mon}}}
\nc{\TBunG}{\sT_{\Bun_G}}
\nc{\lperp}{{}^\perp}
\nc{\Levi}{\on{Levi}}
\nc{\Verdier}{{\on{Verdier}}}
\nc{\pscpt}{{\on{ps-cpt}}}
\nc{\Calk}{\on{Calk}}
\nc{\ICohNSt}{\ICoh_{\N + \St}(\LSG)}
\nc{\NSt}{\ICohNSt}
\nc{\DmodBig}{\Dmod_{!+*}}
\nc{\Dmodproj}{\Dmod^{\on{proj}}}
\nc{\Cotrnk}{\mathsf{Cotrnk}}
\nc{\Ctrnk}{\Cotrnk}
\renc{\ss}{{\on{ss}}}
\nc{\stargen}{{*\on{-gen}}}
\nc{\shgen}{{!\on{-gen}}}
\nc{\Div}{\on{Div}}
\newcommand\blfootnote[1]{%
  \begingroup
  \renewcommand\thefootnote{}\footnote{#1}%
  \addtocounter{footnote}{-1}%
  \endgroup
}
\nc{\sR}{\mathsf{R}}
\nc{\MMaps}{\mathsf{Maps}}
\begin{document}

\title{Deligne-Lusztig duality on the stack of local systems}
\author{Dario Beraldo}

\begin{abstract}

In the setting of the geometric Langlands conjecture, we argue that the phenomenon of divergence at infinity on $\Bun_G$ (that is, the difference between $!$-extensions and $*$-extensions) is controlled, Langlands-dually, by the locus of semisimple $\Gch$-local systems.
To see this, we first rephrase the question in terms of Deligne-Lusztig duality and then study the Deligne-Lusztig functor $\DL_G^\spec$ acting on the spectral Langlands DG category $\ICoh_\N(\LSG)$.
We prove that $\DL_G^\spec$ is the projection $\ICoh_\N(\LSG) \tto \QCoh(\LSG)$, followed by the action of a coherent $D$-module $\St_G \in \Dmod(\LSG)$, which we call the \emph{Steinberg} $D$-module.
We argue that $\St_G$ might be regarded as the dualizing sheaf of the locus of semisimple $G$-local systems.
We also show that $\DL_G^\spec$, while far from being conservative, is fully faithful on the subcategory of compact objects.
\end{abstract}

\maketitle

\blfootnote{
MSC 2010: \subjclass{14D24, 14F05, 18F99, 22E57.}}

\sec{Introduction and main results}

The subjects of the present paper are:
\begin{itemize}
\item
the phenomenon of divergence at infinity on the stack $\Bun_G$;
\item
the locus of semisimple $\Gch$-local systems;
\item
the Deligne-Lusztig functors on the two sides of the geometric Langlands correspondence.
\end{itemize}
In the introduction we explain how these items are related and state our main results: Theorems \ref{mainthm:starext-ortho-to-omega}, \ref{mainthm:principal monoidal ideal}, \ref{mainthm: StG vs StM}, \ref{mainthm:St-fully faithful on CohN}, \ref{mainthm:DL factors thru QCoh}, as well as the conditional proof of Conjecture \ref{conj:stargen = semisimple}.

\ssec{Divergence at infinity on the stack of $G$-bundles}

\sssec{}

Denote by $\Bun_G := \Bun_G(X)$ the stack of $G$-bundles on a smooth complete curve $X$ defined over $\kk$. Here and always in this paper, $\kk$ denotes an algebraically closed field of characteristic zero and $G$ a connected reductive group over $\kk$.
Note that $\Bun_G$ is never quasi-compact (unless $G$ is the trivial group): by bounding the degree of instability of $G$-bundles, one obtains an exhausting sequence of quasi-compact open substacks of $\Bun_G$. 

The failure of quasi-compactness leads to the phenomenon of \emph{divergence at infinity on $\Bun_G$}, to be explained below. The goal of this paper is to describe this phenomenon from the Langlands dual point of view.

\sssec{}

We denote by $\Dmod(\Y)$ the DG category of D-modules on an algebraic stack $\Y$, see e.g. \cite{Book}. In particular, we are interested in $\Dmod(\Bun_G)$ and in its variants discussed below.

Given $U \subseteq \Bun_G$ a quasi-compact open substack, denote by $j_U$ the inclusion functor.
Let $\Dmod(\Bun_G)^{\stargen}$ be the full subcategory of $\Dmod(\Bun_G)$ generated under colimits by objects of the form $(j_U)_{*,\dR} (\F_U)$, for all quasi-compact opens $U \subseteq \Bun_G$ and all $\F_U \in \Dmod(U)$.
Similarly, let $\Dmod(\Bun_G)^{\shgen}$ be the full subcategory of 
$\Dmod(\Bun_G)$ generated under colimits by objects of the form $(j_U)_!(\F_U)$, for all quasi-compact opens $U \subseteq \Bun_G$ and all $\F_U \in \Dmod(U)$ for which $(j_U)_!(\F_U)$ is defined.

\sssec{}

It is proven in \cite{DG-cptgen} that $\Dmod(\Bun_G)^{\shgen} \simeq \Dmod(\Bun_G)$: that is, any object can be written as a colimit of $!$-extensions from quasi-compact opens.
The phenomenon of \emph{divergence at infinity on $\Bun_G$} is the fact that the inclusion $\Dmod(\Bun_G)^{\stargen} \subseteq \Dmod(\Bun_G)$ is \emph{strict}, as soon as $G$ is not abelian.
This statement is an immediate corollary of the following result, which we prove in the main body of the paper.

\begin{mainthm} \label{mainthm:starext-ortho-to-omega}
Let $G$ be a non-abelian reductive group. Any $*$-extension $(j_U)_{*,\dR} (\F_U)$ from a quasi-compact open substack $U \subset \Bun_G$ is left orthogonal to the dualizing sheaf $\omega_{\Bun_G}$: 
$$
\RHom_{\Dmod(\Bun_G)}((j_U)_{*,\dR}  (\F_U), \omega_{\Bun_G}) \simeq 0.
$$
\end{mainthm}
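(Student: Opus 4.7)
The plan is to first reduce the orthogonality assertion to a concrete cohomological vanishing, and then establish that vanishing via a boundary-stratum argument that uses the non-abelianness of $G$ in an essential way.

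First, since $\Dmod(\Bun_G)^{\stargen}$ is by definition generated under colimits by the objects $(j_U)_{*,\dR}(\F_U)$, and since $\RHom(-,\omega_{\Bun_G})$ sends colimits in its first argument to limits of mapping spaces, it suffices to prove the vanishing for $\F_U$ ranging over a family of compact generators of $\Dmod(U)$. For such $\F_U$, Verdier duality on the quasi-compact stack $U$ gives $\mathbb{D}((j_U)_{*,\dR}\F_U) \simeq (j_U)_!(\mathbb{D}\F_U)$, and the standard identification $\RHom(M,\omega_{\Bun_G}) \simeq \pi_{*,\dR}(\mathbb{D}M)$ converts the desired statement into the assertion
\[
\pi_{*,\dR}\bigl((j_U)_!(\G)\bigr) \simeq 0 \qquad \text{for every quasi-compact open } U\subset \Bun_G \text{ and every compact } \G\in \Dmod(U).
\]

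To establish this vanishing, I would invoke the Harder-Narasimhan filtration of $\Bun_G$. Writing $\Bun_G = \bigcup_n U_n$ as an exhausting union of quasi-compact Harder-Narasimhan opens, one has $\omega_{\Bun_G} \simeq \lim_n (j_{U_n})_{*,\dR}\omega_{U_n}$ by $*$-descent along the open covering; applying $\RHom((j_U)_{*,\dR}\F_U,-)$ and using the base-change identity $(j_{U_n})^*(j_U)_{*,\dR} \simeq (j_{U\subset U_n})_{*,\dR}$ for nested open immersions yields
\[
\RHom\bigl((j_U)_{*,\dR}\F_U,\omega_{\Bun_G}\bigr) \simeq \lim_n \RHom_{\Dmod(U_n)}\bigl((j_{U\subset U_n})_{*,\dR}\F_U,\, \omega_{U_n}\bigr).
\]
The successive transition maps in this inverse limit are controlled by the newly-added Harder-Narasimhan strata $S_P$ (indexed by standard parabolics $P\subsetneq G$), each of which fibers over $\Bun_M$ for the Levi $M$ with fibers contractile along the unipotent radical.

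Invoking a Drinfeld-Gaitsgory contraction principle along these unipotent directions, the contribution of each stratum shifts the mapping space in a controlled way, and the telescoping across the tower forces the inverse limit to collapse to zero. The main obstacle will be to organize this infinite cancellation rigorously: one must show not merely that each partial stage contributes nothing asymptotically, but that the full inverse limit vanishes on the nose. This is precisely where the non-abelianness of $G$ is essential, since it guarantees the existence of at least one proper parabolic $P\subsetneq G$ to drive the cancellation; for an abelian $G$ the tower collapses to a single stage, no cancellation is available, and the assertion genuinely fails.
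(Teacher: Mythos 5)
Your Steps through the reduction to an inverse limit are essentially the paper's Steps 1--3, and both formulations are correct: one can cast the theorem either as $\RHom((j_U)_{*,\dR}\F_U,\omega_{\Bun_G})\simeq 0$ or, via Verdier duality on compact objects, as the vanishing $p_{*,\dR}((j_U)_!\G)\simeq 0$, and both reduce by adjunction and base change to the vanishing of an inverse limit over an exhausting system of cotruncative quasi-compact opens $U'$ containing $U$. Up to this point you and the paper are aligned.

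The gap is in the final step, and it is not a small one. You claim that the transition maps in the tower, being ``controlled by the newly-added Harder--Narasimhan strata,'' will telescope to zero via the Drinfeld--Gaitsgory contraction principle. But the contraction principle along the unipotent radical of a parabolic identifies the contribution of a stratum with data on $\Bun_M$; it does not by itself produce cancellation, and there is in general no mechanism by which an inverse limit of non-zero objects with ``controlled'' transition maps must vanish. You acknowledge this honestly (``organize this infinite cancellation rigorously''), but this is precisely the content of the theorem, not a technicality. A direct stratum-by-stratum analysis of this sort is not what the paper does, and I am skeptical it can be made to work without importing the same external input the paper uses.

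The paper's actual proof sidesteps any analysis of the Harder--Narasimhan transitions. Its Step~4 establishes the complementary vanishing $\RHom_{\Dmod(\Bun_G)}(k_{\Bun_G}, (j_U)_!\F_U)\simeq 0$, which is equivalent to $\TBunG(\omega_{\Bun_G})\simeq 0$ and relies on a concrete nontrivial input: the infinite connectivity of $\omega_{\Gr_G}$ proved in \cite[Corollary~1.4.2]{antitemp}, fed through the argument of \cite{ker-adj}. The remaining Steps~5--6 then transfer this vanishing to the original statement by applying Verdier duality on the coherent objects appearing in the limit diagram and by the adjunction coming from cotruncativeness. In short: the theorem does not follow from abstract telescoping; it needs the Borel--Moore/tempered input from \cite{antitemp}, and that is the ingredient missing from your proposal.
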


\begin{rem}
When $G=T$ is abelian, $\Bun_T$ is an infinite disjoint union of quasi-compact open (and closed) substacks. Thus, in this case $\Dmod(\Bun_T)^\stargen \simeq \Dmod(\Bun_T)$.
\end{rem}

\sssec{}

For $G$ non abelian (the case that we tacitly assume from now on), the above theorem allows us to exhibit many objects that do not belong to $\Dmod(\Bun_G)^{\stargen}$: as soon as $\F \in \Dmod(\Bun_G)$ is such that $\RHom(\F, \omega_{\Bun_G}) \neq 0$, there will be no way to write $\F$ as a colimit of $*$-extensions. 
Examples of such $\F$ are $\omega_{\Bun_G}$ itself, any $!$-skyscraper, or $!$-extensions of the form $(j_U)_!(\omega_U)$.

\begin{rem}
Put another way, Theorem \ref{mainthm:starext-ortho-to-omega} states that $\omega_{\Bun_G}$ is right orthogonal to $\Dmod(\Bun_G)^{\stargen}$. On the other hand, the main theorem of \cite{antitemp} states that $\omega_{\Bun_G}$ is also right orthogonal to $\Dmod(\Bun_G)^\temp$, the \emph{tempered subcategory}.\footnote{We refer to \cite{antitemp} for a detailed discussion of the full subcategory $\Dmod(\Bun_G)^\temp \subseteq \Dmod(\Bun_G)$. In the present paper, $\Dmod(\Bun_G)^\temp$ plays a role only in the introduction to motivate our research directions. To orient the reader, the main fact to know about $\Dmod(\Bun_G)^\temp$ is that, under the geometric Langlands conjecture to be recalled below, it ought to correspond to $\QCoh(\LSGch)$.}
In the case $X=\PP^1$, one can easily adapt the results of \cite{antitemp} to obtain that $\Dmod(\Bun_G(\PP^1))^{\stargen}$ and $\Dmod(\Bun_G(\PP^1))^\temp$ are equivalent as full subcategories of $\Dmod(\Bun_G(\PP^1))$.
\end{rem}

\ssec{Divergence on the Langlands dual side: the locus of semisimple $\Gch$-local systems}

\sssec{}

In view of the remark above, it might be tempting to conjecture that the $*$-generated category $\Dmod(\Bun_G)^{\stargen}$ and the tempered category $\Dmod(\Bun_G)^\temp$ are equivalent for any curve $X$. This conjecture is false in higher genus: it is shown in \cite{ramanujan} that we still have $\Dmod(\Bun_G)^{\stargen} \subseteq \Dmod(\Bun_G)^\temp$, but the inclusion is strict.
As we explain next, the difference between $\Dmod(\Bun_G)^{\stargen}$ and $\Dmod(\Bun_G)^\temp$ is accounted for, Langlands dually, by the presence of non-semisimple $\Gch$-local systems on $X$. In the case $X= \PP^1$, there is only the trivial (hence semisimple) $\Gch$-local system: this is the reason for the \virg{accidental} equivalence $\Dmod(\Bun_G(\PP^1))^{\stargen} \simeq \Dmod(\Bun_G(\PP^1))^\temp$.

\sssec{}

Recall that the geometric Langlands conjecture is supposed to match $\Dmod(\Bun_G)$ with $\ICoh_\N(\LSGch)$, where:
\begin{itemize}
\item
$\Gch$ is the Langlands dual group of $G$; 

\item
 $\LSGch$ is the derived stack of de Rham $\Gch$-local systems on $X$;
\item
$\ICoh_\N(\LSGch)$ is a certain enlargement of $\QCoh(\LSGch)$, obtained by ind-completing the full subcategory $\Coh_\N(\LSGch) \subset \QCoh(\LSGch)$ of coherent sheaves with \emph{nilpotent singular support}, see \cite{AG1}.
\end{itemize} 
We now explain how the phenomenon of divergence at infinity on $\Bun_G$ is reflected in the geometry of $\LSGch$.
For this, we need the following definition:

\begin{defn}
A $G$-local system $\sigma \in \LSG(\kk)$ is said to be \emph{semisimple} iff it is of the form $\sigma \simeq \sigma_M  \times^M G$, for some Levi subgroup $M \subseteq G$ and some irreducible $M$-local system $\sigma_M$.
Alternatively\footnote{The equivalence of these two definitions is a simple exercise using, e.g., \cite[Corollary 4.2.12]{Simon}.}, $\sigma$ is semisimple if, whenever it admits a reduction to a parabolic $P$, it admits a further reduction to the associated Levi $M$.
\end{defn}

\sssec{}

The locus $\LS^\ss_{\Gch}$ of semisimple $\Gch$-local systems is only constructible in $\LSGch$, hence the formal completion of $\LSGch$ at $\LS^\ss_{\Gch}$ does not make sense.
Nevertheless, in Section \ref{ssec:ss local systems} we will define a full subcategory $\QCoh(\LSGch)^\ss \subseteq \QCoh(\LSGch)$ which plays the role of the category of quasi-coherent sheaves on $\LSGch$ set-theoretically supported on $\LS^\ss_{\Gch}$.
With such definition, we propose:

\begin{mainconj} \label{conj:stargen = semisimple}
Under Langlands duality, $\Dmod(\Bun_G)^\stargen$ is equivalent to $\QCoh(\LSGch)^{\ss}$.
\end{mainconj}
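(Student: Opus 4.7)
The plan is to reduce Conjecture \ref{conj:stargen = semisimple} to the spectral results of this paper, via a three-step chain of identifications.

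\textbf{Step 1 (automorphic side).} First, I would identify $\Dmod(\Bun_G)^{\stargen}$ with the essential image of a geometric Deligne--Lusztig endofunctor $\DL_G$ on $\Dmod(\Bun_G)$, defined as the signed alternating sum over standard parabolics $P \supseteq B$ of compositions $\Eis_P \circ \CT_P$. The heuristic is that $!$- and $*$-Eisenstein series differ only by boundary contributions along the parabolic strata, and the signed sum over the parabolic poset is designed to cancel all $!$-contributions, leaving precisely a $*$-extension piece. Concretely, I would verify (i) the stability of $\Dmod(\Bun_G)^{\stargen}$ under $\Eis_P$ and $\CT_P$ for every $P$, using base change, and (ii) that every $*$-extension $(j_U)_{*,\dR}(\F_U)$ lies in the essential image of $\DL_G$, by induction on the semisimple rank of Levi subgroups.

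\textbf{Step 2 (transport via Langlands).} Assuming the geometric Langlands equivalence $\Dmod(\Bun_G) \simeq \ICoh_\N(\LSGch)$, together with its compatibility with parabolic induction/restriction, the automorphic $\DL_G$ matches the spectral $\DL_G^\spec$ studied in this paper. Hence, on the spectral side, $\Dmod(\Bun_G)^{\stargen}$ corresponds to the essential image of $\DL_G^\spec$.

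\textbf{Step 3 (spectral description).} By \thmref{mainthm:DL factors thru QCoh}, the functor $\DL_G^\spec$ factors as the projection $\ICoh_\N(\LSGch) \tto \QCoh(\LSGch)$ followed by tensoring with the Steinberg D-module $\St_G$. Thus its essential image coincides with the essential image of $-\otimes \St_G$ on $\QCoh(\LSGch)$. Since, as argued in the paper, $\St_G$ is the dualizing D-module of the semisimple locus $\LS^\ss_\Gch$, its tensor action produces exactly the quasi-coherent sheaves set-theoretically supported on $\LS^\ss_\Gch$, which by definition (\secref{ssec:ss local systems}) is $\QCoh(\LSGch)^\ss$.

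The main obstacle is Step 1: showing that the alternating sum defining $\DL_G$ produces exactly the $*$-generated subcategory. Stability of $\Dmod(\Bun_G)^{\stargen}$ under $\Eis_P$ and $\CT_P$ should be routine, but the fact that the cancellation in the signed sum yields clean $*$-extensions (rather than a mixture of $!$- and $*$-contributions along intermediate boundary strata) requires a careful cofiber analysis indexed by the parabolic poset. Step 3 is also delicate: the identification of the image of action by $\St_G$ with the paper's $\QCoh(\LSGch)^\ss$ hinges on matching the support of $\St_G$ with the set-theoretic support condition used to define the semisimple-supported subcategory, which in the derived setting is subtle and must use the full structure of $\St_G$ as a coherent D-module.
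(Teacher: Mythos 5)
Your overall three-step architecture (identify $\Dmod(\Bun_G)^{\stargen}$ with the image of a Deligne--Lusztig endofunctor, transport across Langlands, and read off the image on the spectral side) is in the same spirit as the paper's argument, but there are two genuine gaps where your plan diverges from the actual proof and would fail as written.

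First, in Step 1 you propose to show directly that the essential image of $\DL_G$ equals $\Dmod(\Bun_G)^{\stargen}$, by "cofiber analysis indexed by the parabolic poset" and induction on rank. The paper does not do this, and with good reason: the identification $\DL_G \simeq \TBunG[-d_G]$ (where $\TBunG = \Psid_* \circ \Psid_!^{-1}$ is the composite of miraculous and naive dualities) is itself an open conjecture of Drinfeld--Wang and Gaitsgory, not something obtained by staring at Eisenstein/constant-term cofibers. What the paper actually establishes unconditionally is that the image of $\TBunG$ equals $\Dmod(\Bun_G)^{\stargen}$: this is a short argument using the key formula $\TBunG((j_U)_!\F_U) \simeq (j_U)_{*,\dR}(\Psid_{U,!}^{-1}(\F_U))$ on cotruncative opens and full faithfulness of $\TBunG$ on compacts. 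The passage from $\TBunG$ to $\DL_G$ is then cited as a separate conjecture, whose proposed proof goes through Drinfeld's compactification of the diagonal of $\Bun_G$, not through the heuristic cancellation you describe. Also note that the relevant functors are the \emph{enhanced} $\Eis_P^{\enh}$ and $\CT_P^{\enh}$, and the definition is a cofiber of a colimit over $\Par'$ rather than a literal alternating sum; without the enhancement the comparison with the spectral side does not hold.

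Second, in Step 3 you conclude by saying that the image of $\ul\St_G \otimes -$ equals $\QCoh(\LSGch)^{\ss}$ because $\St_G$ is "the dualizing sheaf of the semisimple locus" and matching supports does the rest. This is precisely the content of Theorem~\ref{mainthm:principal monoidal ideal}, and it is a theorem, not a formal consequence of a support heuristic. In fact $\QCoh(\LSG)^{\ss}$ is not defined by a set-theoretic support condition; it is defined by tensoring $\QCoh(\LSG)$ over $\Dmod(\LSG)$ with $\Dmod(\LSG)^{\ss}$, and showing that this cocompletion coincides with the principal monoidal ideal generated by $\ul\St_G$ requires the "divisibility" argument (Theorem~\ref{mainthmbis:Dmod ss principal ideal} and the functor $\Div_G$), which in turn rests on the functional equation of Theorem~\ref{mainthm: StG vs StM} and the second adjunction. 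You flag this as "delicate" but do not supply the argument, so Step 3 remains a gap. To make your proposal into a proof matching the paper's, replace your direct attack in Step 1 by the chain (image of $\TBunG$) $=$ ($\stargen$) together with the cited conjecture $\TBunG[-d_G] \simeq \DL_G$, and replace the support heuristic in Step 3 by an explicit appeal to Theorem~\ref{mainthm:principal monoidal ideal}.
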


We will \virg{prove} this conjecture in Section \ref{ssec:DL-proof of conj} by first reformulating it as Conjecture \ref{conj:spectrl T}, and then by showing that the latter follows from combining the geometric Langlands conjecture with a natural conjecture about Drinfeld's compactification of the diagonal of $\Bun_G$.

\ssec{Cuspidal objects, $\star$-extensions, and tempered objects}

This section, which can be skipped by the reader, explains how Conjecture \ref{conj:stargen = semisimple} is related to the more standard versions of the geometric Langlands conjecture.

\sssec{}

Denote by $\LS_\Gch^\irred \subset \LSGch$ the open substack of irreducible $\Gch$-local systems. Any irreducible $\Gch$-local system is obviously semisimple, hence, whatever the definition of $\QCoh(\LSGch)^\ss$, the inclusion $\QCoh(\LS_\Gch^\irred) \subseteq \QCoh(\LSGch)^\ss$ ought to hold. Indeed, once we give the definition of $\QCoh(\LSGch)^\ss$ in Section \ref{ssec:ss local systems}, the inclusion $\QCoh(\LS_\Gch^\irred) \subseteq \QCoh(\LSGch)^\ss$ will be evident.
Under Langlands duality, the chain of obvious inclusions
$$
\QCoh(\LS_\Gch^\irred) 
\subseteq
 \QCoh(\LSGch)^\ss
\subseteq
\QCoh(\LSGch)
\subseteq
\ICoh_\N(\LSGch)
$$
goes over (conjecturally) to the chain of non-obvious inclusions
$$
\Dmod(\Bun_G)^\cusp
\subseteq
\Dmod(\Bun_G)^\stargen
\subseteq
\Dmod(\Bun_G)^\temp
\subseteq
\Dmod(\Bun_G),
$$
where $\Dmod(\Bun_G)^\cusp$ is the DG category of \emph{cuspidal} D-modules on $\Bun_G$.

\sssec{}

Let us comment on the inclusions on the automorphic side.
The third inclusion is the only tautological one. The first inclusion follows from \cite[Proposition 1.4.6]{CT}. The second inclusion appears to be nontrivial:
it says that any $*$-extension is tempered. While the present paper was undergoing the publication process, a proof of this fact has become available, see \cite{ramanujan}: it relies on some of the methods of \cite{antitemp}, but it required a different point of view on the notion of temperedness.

\begin{rem}
Note that in this case an obvious fact on the spectral side, namely the inclusion $\QCoh(\LSGch)^\ss \subseteq \QCoh(\LSGch)$, informed us about something that is not evident on the automorphic side (namely, the fact that any $*$-extension is tempered).
For an instance of the inverse direction, the reader might look ahead at Theorem \ref{mainthm:St-fully faithful on CohN} and the remark following it.
\end{rem}

\ssec{Quasi-coherent sheaves on semisimple local systems} \label{ssec:ss local systems}

Let us finally give the definition of $\QCoh(\LS_G)^\ss$ and state our main results about it. (Since there is no Langlands duality in this section, we change $\Gch$ with $G$.)
We actually have two definitions: the official one, given next, and the alternative characterization provided by Theorem \ref{mainthm:principal monoidal ideal}.

\sssec{}

For a parabolic subgroup $P \subseteq G$, with Levi $M$, denote by 
$$
\LSM \xto{\i_{P}} \LSP \xto{\p_P} \LSG
$$
the induction functors.
In spite of the notation, the map $\i_P: \LSM \to \LSP$ is not at all an embedding. Yet, by the contraction principle, the functor $(\i_P)_{*,\dR}: \Dmod(\LSM) \hto \Dmod(\LSP)$ is fully faithful.
We will recall the contraction principle in Section \ref{ssec:Braden}.

\sssec{}

As a preliminary step, we define the full subcategory $\Dmod(\LSG)^\ss \subseteq \Dmod(\LSG)$ of D-modules \emph{supported on $\LSG^\ss$}: an object $\F \in \Dmod(\LSG)$ belongs to $\Dmod(\LSG)^\ss$ iff
$$
(\p_P)^{!,\dR} (\F) 
\in
(\i_P)_{*,\dR} \bigt{ \Dmod(\LSM) }
,
\hspace{.2cm} \mbox{for any $P$}.
$$
Note that such definition mimics the definition of semisimple $G$-local systems: $\sigma$ is semisimple iff, whenever it is reducible to $P$, it is also reducible to $M$.
Next, define $\QCoh(\LSG)^\ss$ to be the cocompletion of the essential image of the action functor
$$
\QCoh(\LSG)
\usotimes{\Dmod(\LSG)}
\Dmod(\LSG)^\ss
\longto
\QCoh(\LSG).
$$

\sssec{}

Here are some facts that support this definition:

\begin{itemize}
\item
If $\F \in \Dmod(\LSG)^\ss$, then it is immediately checked that its $(!, \dR)$-fiber at $\sigma \in \LSG(\kk)$ is zero whenever $\sigma$ is not semisimple.
Similarly, if $\F \in \QCoh(\LSG)^\ss$, then its $*$-fiber at $\sigma \in \LSG(\kk)$ is zero whenever $\sigma$ is not semisimple.

\item
In Section \ref{sssec:defn of St}, we will define the \emph{Steinberg}
D-module $\St_G$ and its underlying quasi-coherent sheaf $\ul\St_G \in \QCoh(\LSG)$. The former plays the role of the dualizing sheaf on the non-existent stack $\LSG^\ss$, the latter plays the role of the structure sheaf of the non-existent formal completion $(\LSG)^\wedge_{\LSG^\ss}$. For instance, the geometric fibers $\restr{\ul\St_G}{\sigma}$ are zero for $\sigma$ non-semisimple and $1$-dimensional (but sitting in varying cohomological degree) if $\sigma$ is semisimple.

\item
As a consequence of Theorem \ref{mainthmbis: fibers of St}, any skyscraper (in either the D-module or the quasi-coherent sense) at a semisimple local system belongs to $\Dmod(\LSG)^\ss$ or $\QCoh(\LSG)^\ss$.
\end{itemize}

\sssec{}

The next two theorems provide alternative characterizations of $\QCoh(\LSG)^\ss$ and $\Dmod(\LSG)^\ss$.
Given a cocomplete monoidal symmetric DG category $(\C,\otimes)$, recall the notion of \emph{principal monoidal ideal generated by $c\in\C$}: this is the full subcategory of $\C$ consisting of objects in the essential image of $c \otimes - : \C \to \C$. Note that a principal monoidal ideal might not be closed under colimits.

\begin{mainthm} \label{mainthm:principal monoidal ideal}
The full subcategory $\QCoh(\LS_G)^\ss \subseteq \QCoh(\LS_G)$ is a \emph{principal monoidal ideal}, generated by the Steinberg object $\ul\St_G$ (defined below).
\end{mainthm}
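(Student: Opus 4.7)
The candidate generator is $\ul\St_G := \St_G \star \O_{\LSG}$, obtained by letting the Steinberg $D$-module $\St_G \in \Dmod(\LSG)$ (constructed later in the paper, cf.\ Theorem \ref{mainthm: StG vs StM}) act on the structure sheaf through the canonical action of $\Dmod(\LSG)$ on $\QCoh(\LSG)$. The plan is to prove the two containments $\ul\St_G \otimes \QCoh(\LSG) \subseteq \QCoh(\LSG)^\ss$ and $\QCoh(\LSG)^\ss \subseteq \ul\St_G \otimes \QCoh(\LSG)$ separately, and to conclude that the principal ideal is already closed under colimits from an idempotent-like property of $\ul\St_G$.

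The first containment reduces to showing $\St_G \in \Dmod(\LSG)^\ss$, because then $\St_G \star \O_{\LSG}$ lies in $\QCoh(\LSG)^\ss$ by the very definition of the latter as the cocompletion of the image of the action functor $\QCoh(\LSG) \otimes_{\Dmod(\LSG)} \Dmod(\LSG)^\ss \to \QCoh(\LSG)$, and stability of $\QCoh(\LSG)^\ss$ under tensoring with arbitrary objects of $\QCoh(\LSG)$ is built into the construction. To check $\St_G \in \Dmod(\LSG)^\ss$, I would use the anticipated construction of $\St_G$ as a signed totalization over the poset of parabolics, built from the diagrams $\LSM \xto{\i_P} \LSP \xto{\p_P} \LSG$, and verify by base change plus the contraction principle recalled in Section \ref{ssec:Braden} that $(\p_Q)^{!,\dR}(\St_G)$ is $(\i_Q)_{*,\dR}$-extended from $\LS_{M(Q)}$ for every parabolic $Q$.

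The reverse containment is the substantive part. The strategy is to establish the following monogenicity statement on the $D$-module side: $\Dmod(\LSG)^\ss$ is generated, as a $\Dmod(\LSG)$-module subcategory, by $\St_G$ alone. Concretely, I would produce the right adjoint $\pi^\ss : \Dmod(\LSG) \to \Dmod(\LSG)^\ss$ to the inclusion via the \v{C}ech-type resolution indexed by parabolics, identify $\pi^\ss(\omega_{\LSG})$ with $\St_G$ up to normalization, and verify a projection formula $\pi^\ss(\F) \simeq \St_G \star \F$ by checking Beck--Chevalley for the $\Dmod(\LSG)$-action against the inclusion. Transporting this statement across the action map above and invoking the associativity identity $(\St_G \star \F) \star \O_{\LSG} \simeq \ul\St_G \otimes (\F \star \O_{\LSG})$ realizes every generator of $\QCoh(\LSG)^\ss$ as $\ul\St_G \otimes \G$ for some $\G \in \QCoh(\LSG)$; closure of this image under arbitrary colimits is then automatic from the continuity of $\ul\St_G \otimes -$ combined with the idempotent relation $\ul\St_G \otimes \ul\St_G \simeq \ul\St_G$ (up to shift).

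The main obstacle is the monogenicity claim. Showing that the \v{C}ech complex of parabolic pushforwards represents $\pi^\ss$ through a \emph{single} object $\St_G$---rather than merely a collection of objects indexed by parabolics that only jointly generate---rests on the compatibility between Steinberg modules at different Levi levels afforded by Theorem \ref{mainthm: StG vs StM}, together with careful sign and shift bookkeeping to ensure that the alternating sum collapses to the expected idempotent-like relation powering the projection formula. This is also the step where I expect one truly needs the full force of the recursive Levi comparison, rather than just the group-theoretic definition of semisimplicity.
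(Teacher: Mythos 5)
Your first containment is fine, and correctly anticipates Theorem \ref{mainthm: StG vs StM} as the device that puts $\St_G$ in $\Dmod(\LSG)^\ss$. The reverse containment and the closure-under-colimits step, however, both rest on the relation $\ul\St_G \otimes \ul\St_G \simeq \ul\St_G$ up to a shift, and this relation is \emph{false}. By Theorem \ref{mainthmbis: fibers of St}, the $*$-fiber of $\ul\St_G$ at a semisimple $\sigma \simeq \sigma_M \times^M G$ is $\kk$ placed in cohomological degree $-\bigl(2\dim H^0(X_\dR,\fu_{\sigma_M}) + \rk(G)-\rk(M)\bigr)$, a quantity that \emph{varies with $\sigma$}. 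Since $*$-restriction to a point is symmetric monoidal, the $*$-fiber of $\ul\St_G \otimes \ul\St_G$ is $\kk$ in twice that degree, so no single global shift can identify the square with $\ul\St_G$. The same fiber argument kills the projection formula $\pi^\ss(\F) \simeq \St_G \star \F$: applied to $\F = \St_G$ (which lies in $\Dmod(\LSG)^\ss$ and is thus fixed by $\pi^\ss$) it would force precisely the idempotent relation you are invoking.

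The paper circumvents this in two ways that differ structurally from your plan. For the reverse containment it does \emph{not} identify $\St_G \sotimes -$ with the right adjoint to the inclusion $\Dmod(\LSG)^\ss \hto \Dmod(\LSG)$; instead it exhibits an explicit one-sided inverse, the functor
$$
\Div_G := \ker\Bigt{\id \longrightarrow \lim_{P\in(\Par')^\op}\Eis^\fD_{P^-,*}\CT^\fD_{P^-,!}},
$$
and proves directly (Theorem \ref{thm:Div functor}), using the functional equation, the second adjunction $\CT^\fD_{P,!}\simeq\CT^\fD_{P^-,*}$, and the hypothesis $\F\in\Dmod(\LSG)^\ss$, that $\St_G\sotimes\Div_G(\F)\simeq\F$. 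A section of $\St_G\sotimes -$ is all that is needed for surjectivity onto $\Dmod(\LSG)^\ss$; no idempotence or projection formula is asserted. For the closure of the principal ideal under colimits the paper uses the fully faithfulness of $\ul\St_G\otimes -$ on $\Coh_\N(\LSG)$ (Theorem \ref{mainthm:St-fully faithful on CohN}), not an idempotent relation; the remark immediately after the statement flags precisely that this closure is not obvious and is powered by that theorem. Your proposal makes no use of this fully faithfulness input, and the idempotent-based substitute you offer for it does not hold.
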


\begin{mainthmbis}{mainthm:principal monoidal ideal} 
[Divisibility by the Steinberg D-module] \label{mainthmbis:Dmod ss principal ideal}
The full subcategory $\Dmod(\LSG)^\ss \subseteq \Dmod(\LSG)$ is a principal monoidal ideal generated by $\St_G \in \Dmod(\LSG)$.
More precisely, the functor $\St_G \sotimes -: \Dmod(\LSG) \to \Dmod(\LSG)$ lands in $\Dmod(\LSG)^\ss$ and it comes with an explicit right inverse $\Div_G: \Dmod(\LSG)^\ss \to \Dmod(\LSG)$.
\end{mainthmbis}

\begin{rem}
In Section \ref{sec:divisibilty by St}, we will spell out the definition of $\Div_G$, prove Theorem \ref{mainthmbis:Dmod ss principal ideal} and then deduce Theorem
\ref{mainthm:principal monoidal ideal} from it. 
The statement of Theorem \ref{mainthmbis:Dmod ss principal ideal}, which we called \virg{divisibility by the Steinberg object} was inspired by \cite[Theorem 1.1]{Lu}. The latter theorem shows that, in the modular representation theory of finite groups of Lie type, the ideal of projective representations is principal and generated by the Steinberg module. 
\end{rem}

\begin{rem}
Since $\QCoh(\LS_G)^\ss$ is by its very definition closed under colimits, Theorem \ref{mainthm:principal monoidal ideal} implies that the same holds true for the principal monoidal ideal generated by $\ul\St_G$. This is a consequence of Theorem \ref{mainthm:St-fully faithful on CohN}, see Lemma \ref{lem:ulSt a single tensor} for the proof.

On the other hand, in the D-module setting of Theorem \ref{mainthmbis:Dmod ss principal ideal}, the existence of $\Div_G$ immediately implies that the principal monoidal ideal generated by $\St_G$ is closed under colimits.
\end{rem}

\sssec{} \label{sssec:defn of St}

Let us now define the Steinberg objects $\St_G \in \Dmod(\LSG)^\ss$ and $\ul\St_G \in \QCoh(\LSG)^\ss$. As mentioned, they are precise analogues of a well-known object of classical representation theory (see, e.g., \cite{St}, \cite{curtis}, \cite{Lu}, \cite{hump}). By definition, $\St_G$ is the coherent D-module defined as
\begin{equation} \label{eqn:defn of St D-module}
\St_G:= \coker
\Bigt{
\uscolim{P \in \Par'} \;
(\p_P)_{*,\dR}(\omega_{\LSP,\dR})
\to
\omega_{\LSG,\dR}
}
\in \Dmod(\LSG),
\end{equation}
where $\Par'$ be the poset of proper standard (relative to a chosen Borel $B$, fixed throughout) parabolics of $G$.
Then we set $\ul\St_G$ to be the quasi-coherent sheaf underlying the Steinberg D-module $\St_G \in \Dmod(\LSG)$.

\sssec{}

From the above formula, it is not even clear that $\St_G$ belongs to $\Dmod(\LSG)^\ss$, let alone a generator.
To prove that $\St_G \in \Dmod(\LSG)^\ss$ (and consequently that $\ul\St_G$ belongs to $\QCoh(\LSG)^\ss$), we will show that the Steinberg construction enjoys the following \virg{functional equation}, which relates $\St_G$ to the Steinberg object for a Levi subgroup $M \subseteq G$.

\begin{mainthm} \label{mainthm: StG vs StM}
For any parabolic $P \subseteq G$ with Levi $M$, there is a canonical isomorphism
\begin{equation} \label{intro-eqn: StG vs StM}
\p_P^{!,\dR}(\St_G) 
\simeq 
(\i_P)_{*,\dR}(\St_M)[\rk(G)-\rk(M)]
\end{equation}
in $\Dmod(\LSP)$.
\end{mainthm}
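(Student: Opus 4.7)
The plan is to apply $\p_P^{!,\dR}$ to the defining cofiber sequence of $\St_G$, namely
$$
\colim_{Q \in \Par'(G)} (\p_Q)_{*,\dR}(\omega_{\LS_Q,\dR}) \longrightarrow \omega_{\LS_G,\dR} \longrightarrow \St_G,
$$
and to identify the resulting object on $\LS_P$ by a combinatorial/geometric reorganization inspired by viewing the Steinberg construction as the top chains of the Tits building. Since $\p_P^{!,\dR}$ commutes with colimits and sends $\omega_{\LS_G,\dR}$ to $\omega_{\LS_P,\dR}$, the problem reduces to computing $\p_P^{!,\dR}(\p_Q)_{*,\dR}(\omega_{\LS_Q,\dR})$ for each proper standard $Q$ and then re-indexing the resulting colimit in a way compatible with the target $(\i_P)_{*,\dR}(\St_M)$.

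For each pair $(P,Q)$, base change (which is available for $(!^{\dR},*^{\dR})$ in this setting) gives
$$
\p_P^{!,\dR}(\p_Q)_{*,\dR}(\omega_{\LS_Q,\dR}) \;\simeq\; (\pi^Q_P)_{*,\dR}\bigl(\omega_{\LS_P \times_{\LS_G}\LS_Q,\dR}\bigr),
$$
where $\pi^Q_P$ is the first projection. The stack $\LS_P\times_{\LS_G}\LS_Q$ carries a stratification by relative position, indexed by the double-coset set $W_P\backslash W/W_Q$, with the stratum labelled by $w$ given by $\LS_{P\cap wQw^{-1}}$. The key combinatorial observation is that the standard parabolics $Q\subseteq P$ correspond bijectively (via $Q\mapsto Q\cap M$, with inverse $Q_M\mapsto Q_M\cdot U_P$) to standard parabolics $Q_M$ of $M$, and for such $Q$ the map $\p_Q$ factors as $\p_P\circ\i_P\circ\p_{Q_M}$. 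So the terms with $Q\subseteq P$ and $w=e$ yield, after applying $(\i_P)_{*,\dR}$, precisely the building blocks of the defining colimit of $\St_M$.

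The main step is then to show that the remaining contributions —those from $Q\not\subseteq P$, and from non-identity double-cosets in $\LS_P\times_{\LS_G}\LS_Q$— combine to produce exactly $\omega_{\LS_P,\dR}$, so that after taking the cofiber only $(\i_P)_{*,\dR}(\St_M)$ survives, with a cohomological shift of $\rk(G)-\rk(M)$ reflecting the codimension of the $P$-simplex in the Tits building of $G$ (equivalently, the dimension of the central torus $Z_M^\circ/Z_G^\circ$). The shift should be made explicit via the link-of-simplex computation: the link of the $P$-simplex is the Tits building of $M$, and the top Steinberg classes sit in complementary degrees.

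The main obstacle is this cancellation/recognition step: establishing that the ``off-diagonal'' terms cancel requires showing that a certain subposet of $\Par'(G)$ (those $Q$ not contained in $P$, together with the non-trivial relative-position strata attached to $Q\subseteq P$) has contractible nerve. I expect this to follow from a Quillen-style Theorem~A applied to the inclusion $\Par'(M)\hookrightarrow\Par'(G)$ and the description of links in the spherical building of $G$. A secondary technical point is the careful tracking of the shift $\rk(G)-\rk(M)$, which I would extract from the identification $\omega_{\LS_M,\dR}\simeq \i_P^{!,\dR}(\omega_{\LS_P,\dR})$ combined with the Verdier duality relating $(\i_P)_{*,\dR}$ and the relevant dimension of $\LS_P$ over $\LS_M$.
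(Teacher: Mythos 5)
Your high-level blueprint matches the paper's: apply $\p_P^{!,\dR}$ to the defining cone, base-change to $\LS_P \times_{\LS_G} \LS_Q \simeq \ul\Maps(X_\dR, P\backslash G/Q)$, stratify by relative position, and isolate the single stratum that produces $(\i_P)_{*,\dR}(\St_M)$ after cancellation. The paper organizes the double colimit in the opposite order (fix the relative position $w$, colimit over $Q$, then run over $w$), which is a technical reshuffle, not a different idea. The shift $\rk G - \rk M$ also does come out, as you guess, from a codimension count at the longest element $w_0'$ of the set $W'=\{w: w^{-1}(J_{P})\subseteq R^+\}$.

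However, the step you flag as the "main obstacle" is precisely where your proposal has a genuine gap, and the fix you suggest does not work. You propose to establish the cancellation of the off-diagonal (intermediate-$w$) contributions by a Quillen Theorem~A argument on the poset $\Par'(G)$, i.e.\ a purely combinatorial nerve-contractibility statement about the spherical Tits building of $G$ and the link of the $P$-simplex. But the objects being colimited here are not simplicial chains of the building; they are dualizing complexes of the mapping stacks $\ul\Maps\bigl(X_\dR, (P_0\backslash G/Q)^{\le w}\bigr)$ pushed forward to $\LS_{P_0}$. What actually needs to vanish (after taking geometric $!$-fibers at a point $\sigma_{P_0} \in \LS_{P_0}(\kk)$) is the reduced homology of the \emph{glued Springer prestack} $\Spr^{\sigma,\le w}_{\Glued}/\Spr^{\sigma,<w}_{\Glued}$ for $1 < w < w_0'$, where $\Spr^\sigma_Q$ is the derived scheme of $Q$-reductions of the induced $G$-local system. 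This is a geometric contractibility theorem, established in \cite{AG2} (Sections 8.5.1--8.5.5, in the case $A=0$); it takes the global structure of the local system on the curve as input and is not a formal consequence of building combinatorics. The nerve of $\Par'(G)$ is a sphere, and links in the building are smaller buildings, as you say — but that only controls the \emph{index poset}, not the value the functor takes on it. You would additionally have to prove that the functor $Q\mapsto (\pi^Q_P)_{*,\dR}\omega$ is "homotopy-constant" on the relevant subposet, and that reduces exactly to the Springer-fiber contractibility, which is the content you are implicitly skipping.

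A smaller but real issue: the claim that the "remaining contributions combine to produce $\omega_{\LS_P,\dR}$" is not the contribution of the terms $Q\not\subseteq P$, as your phrasing suggests. In the paper, $\omega_{\LS_{P_0}}$ is recovered already from the \emph{identity} relative-position stratum, via the identity
$$
\uscolim{Q\subseteq P_0}\ (\i_{P_0\cap Q\to P_0})_{*,\dR}\,\omega_{\LS_{P_0\cap Q}}\ \simeq\ \omega_{\LS_{P_0}},
$$
so that $\S^{\le 1}\simeq 0$; the intermediate strata contribute nothing at all. Finally, a caution on the shift: the identification $\omega_{\LS_M,\dR}\simeq \i_P^{!,\dR}\omega_{\LS_P,\dR}$ you propose to invoke is not correct as stated; the map $\i_P:\LS_M\to\LS_P$ is not a closed embedding, and the actual bookkeeping of the $[\rk G-\rk M]$ comes from a formal colimit-reindexing lemma (on the poset of subsets of the set of simple roots) applied at the open stratum $w_0'$, not from a Verdier-duality computation.
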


\sssec{}

Formula \eqref{intro-eqn: StG vs StM} allows to compute the $!$-fibers of $\St_G$:

\begin{mainthmbis}{mainthm: StG vs StM} \label{mainthmbis: fibers of St}
For $\sigma \in \LSG$ a $\kk$-point, $\restr {\St_G} \sigma \neq 0$ if and only if $\sigma$ is semisimple. If $\sigma \simeq \sigma_M \times^M G$ with $\sigma_M$ irreducible, then 
\begin{equation} \label{eqn:fibers of StG at sigmaM}
\restr {\St_G} \sigma
 \simeq
 \kk[2 \dim H^0(X_\dR, \fu_{\sigma_M}) + \rk(G) - \rk(M)],
\end{equation}
where $\fu$ is the Lie algebra of the unipotent radical of a parabolic $P$ with Levi $M$. 
\end{mainthmbis}

\begin{example}
In particular, $\restr{\St_G}{\sigma} \simeq \kk$ when $\sigma$ is irreducible. This is obvious from the definition, as $\restr{\St_G}{\LSG^\irred} \simeq \omega_{\LSG^\irred}$.
\end{example}

\begin{example}
At the other extreme, for $\sigma_\triv$ the trivial $G$-local system, \eqref{eqn:fibers of StG at sigmaM} yields
$$
\restr \St {\sigma_\triv} \simeq \kk[|\sR|+\rk(G)],
$$
where $|\sR|$ is the number of roots of $G$. This can also be seen directly: indeed, 
$$
\restr {\St_G} {\sigma_\triv} 
\simeq
\coker
\Bigt{ 
\uscolim{P \in \Par'} \, H_*(G/P) 
 \to \kk},
$$
and the latter can be simplified with the help of some standard Weyl combinatorics.
\end{example}

\sssec{}

The functor $\ul\St_G \otimes - : \QCoh(\LSG) \to \QCoh(\LSG)^\ss$ is very far from being an inclusion: as shown above, it annihilates skyscrapers at non-semisimple $G$-local systems, so it is not even conservative. Thus, the next result comes perhaps as a surprise.

Recall that $\Coh_\N(\LSG)$ denotes the (non-cocomplete) full subcategory of $\QCoh(\LSG)$ consisting of coherent sheaves with nilpotent singular support (see \cite{AG1}).

\begin{mainthm} \label{mainthm:St-fully faithful on CohN}
The functor 
$$
\ul\St_G \otimes - : 
\Coh_\N(\LSG)
\longto
\QCoh(\LSG)^\ss
$$
is fully faithful.
\end{mainthm}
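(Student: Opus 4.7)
The plan is to prove full faithfulness by induction on the semisimple rank of $G$, using Theorem~\ref{mainthm: StG vs StM} (the functional equation for $\St_G$) as the key reduction tool.

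First, since $\ul\St_G \otimes -$ preserves colimits and $\Coh_\N(\LSG)$ is precisely the subcategory of compact objects of $\ICoh_\N(\LSG)$, I would reduce to checking full faithfulness on a convenient set of compact generators. By the singular-support theory of \cite{AG1}, such a set is furnished by objects of the form $(\p_P)_{*}(\F_P)$, for $P$ ranging over standard parabolics and $\F_P$ over generators of $\Coh_\N(\LSP)$, together with $\O_{\LSG}$.

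Second, I would combine the projection formula for $\p_P$ with the functional equation to obtain a computation of the shape
$$
\ul\St_G \otimes (\p_P)_{*}(\F_P) \simeq (\p_P)_{*}\bigl((\i_P)_{*}(\ul\St_M) \otimes \F_P\bigr)[\rk(G)-\rk(M)],
$$
after reconciling the D-module version of Theorem~\ref{mainthm: StG vs StM} with its underlying quasi-coherent incarnation. Combined with the fully faithfulness of $(\i_P)_{*,\dR}$ coming from the contraction principle (recalled in Section~\ref{ssec:Braden}), this rewrites each $\Hom$-space $\Hom(\ul\St_G \otimes \F_1, \ul\St_G \otimes \F_2)$ in terms of a $\Hom$-space on $\LSM$ involving $\ul\St_M$, reducing the assertion to its analogue for the Levi $M$. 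The base case $G = T$ is immediate: the poset $\Par'$ is empty for a torus, so $\ul\St_T \simeq \omega_{\LST}$ is a line bundle on the smooth stack $\LST$, and tensoring with a line bundle is an equivalence. The inductive step then completes the argument.

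The main obstacle will be the reduction step above. One must carefully translate the D-module-theoretic formula of Theorem~\ref{mainthm: StG vs StM} into a statement governing the quasi-coherent tensor product $\ul\St_G \otimes -$, and justify the projection formula for the non-proper morphism $\p_P$ within the singular-support-enriched framework of \cite{AG1}. A related subtlety is to confirm that the $\Hom$-pairings resulting on $\LSM$ actually involve objects of $\Coh_\N(\LSM)$, so that the inductive hypothesis applies; this should follow from the compatibility of the nilpotent singular-support condition with parabolic induction, but must be checked.
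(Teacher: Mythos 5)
Your plan captures the right skeleton — induction on semisimple rank, reduction to the Karoubi generators $(\p_P)_*(\F_P)$ of $\Coh_\N(\LSG)$ furnished by \cite{AG1}, use of the functional equation of Theorem~\ref{mainthm: StG vs StM} to drop down to a Levi, and the contraction-principle full faithfulness of $(\i_P)_{*,\dR}$. That is precisely the architecture of the paper's proof. But as stated, the reduction only covers one of the two cases that must be treated, and the case you leave out is the one requiring a genuinely new ingredient.

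Your computation of $\ul\St_G \otimes (\p_P)_*(\F_P)$ and the resulting descent to $\LSM$ handles pairs where the \emph{second} argument is pushed forward from a \emph{proper} parabolic $P$. (In fact the paper first reformulates the statement as: $\ul\St_G \otimes -$ on $\QCoh(\LSG)$ is fully faithful on all pairs $(c,c')$ with $c \in \QCoh(\LSG)$ arbitrary and $c' \in \Coh_\N(\LSG)$; one then reduces $c'$ to Karoubi generators, and $c$ to arbitrary $\QCoh$-generators, keeping careful track of which variable tolerates which kind of colimit.) You list $\O_{\LSG}$ as a generator but your reduction step does not say anything about it: when $c'$ is a perfect complex (the $P = G$ stratum), there is no proper Levi to descend to and the functional equation gives you nothing. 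The paper treats this case separately, reducing $c' = \O_{\LSG}$ and $c$ running over either $j_*(\F_0)$ (pushforward from the irreducible locus) or $(\p_P)_*(\F_P)$ with $P$ proper; the first sub-case hinges on the \emph{second adjunction} $\CT_{P,!}^{\Dmod} \simeq \CT_{P^-,*}^{\Dmod}$ for local systems (Braden's theorem, Theorem~\ref{thm:CT for Dmod on LS}), which your proposal never invokes. The second adjunction is also hidden inside the proof of the key intermediate statement (Theorem~\ref{thm: P-fully faithfulness }, that $\ul{(\i_P)_*\St_M} \otimes -$ is fully faithful on $\Perf(\LSP)$), which is where your ``reduction to the Levi'' actually lands and where the inductive hypothesis is applied. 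Without isolating that statement, your induction does not close.

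Two smaller points: the map $\p_P: \LSP \to \LSG$ \emph{is} proper (schematic with fibers controlled by $G/P$), so the worry you flag about a non-proper projection formula is not a real obstacle; what does require work is reconciling the $\ind_L/\oblv_L$ functoriality with $*$-pushforward, which the paper packages in the formulas \eqref{magic-formula}--\eqref{magic-formula-2}. And the Karoubi generators of $\Coh_\N(\LSG)$ are $(\p_P)_*(\F_P)$ with $\F_P \in \Perf(\LSP)$, not $\F_P \in \Coh_\N(\LSP)$ — otherwise the reduction to $\Perf$ on the Levi would not be available.
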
 

Thus, $\QCoh(\LSG)^\ss$ contains a \virg{hidden} copy of $\Coh_\N(\LSG)$, which we will later denote as $\Coh_\N^\St(\LSG)$.

\begin{rem}
This statement is the Langlands dual of an evident statement on the automorphic side: the fact that the composition of the miraculous and the naive duality is fully faithful when restricted to compact objects. 
We will explain this, as well as the relation between miraculous duality, Deligne-Lusztig duality and the Steinberg object, in the next section.
\end{rem}

\ssec{Deligne-Lusztig duality and the proof of Conjecture \ref{conj:stargen = semisimple}} \label{ssec:DL-proof of conj}

\sssec{}

Let $\C$ be a dualizable cocomplete DG category. Recall that functors from $\C^\vee \to \C$ are given by \virg{kernels} in $\C \otimes \C$.
In the case $\C = \Dmod(\Y)$ with $\Y$ a quasi-compact\footnote{The correct technical condition is QCA, see \cite{finiteness}.} algebraic stack, the kernel 
$$
\Delta_{*,\dR}(\omega_\Y) 
\in 
\Dmod(\Y \times \Y)
\simeq 
\Dmod(\Y) \otimes \Dmod(\Y)
$$
provides a self-duality equivalence 
$$
\Psid_*: \Dmod(\Y)^\vee \longto \Dmod(\Y).
$$
When $\Y$ is not quasi-compact, $\Psid_*$ is not an equivalence, unless the closure of any quasi-compact open of $\Y$ is itself quasi-compact (see \cite{DG-cptgen}). 
In particular, $\Psid_*: \Dmod(\Bun_G)^\vee \to \Dmod(\Bun_G)$ is never an equivalence when $G$ is not abelian.

\sssec{}

On the other hand, following V. Drinfeld, one defines 
$$
\Psid_!: \Dmod(\Y)^\vee \longto \Dmod(\Y)
$$
to be the functor determined by the kernel 
$$
\Delta_!(k_\Y) 
\in 
\Dmod(\Y \times \Y)
\simeq 
\Dmod(\Y) \otimes \Dmod(\Y).
$$
The stack $\Y$ is said to be \emph{miraculous} if $\Psid_!$ is an equivalence. By \cite{DG-cptgen}, $\Bun_G$ is miraculous; moreover it contains an exhausing sequence of miraculous quasi-compact opens.

\sssec{}

Let us consider the composition of the miraculous and the naive duality, that is, the functor
$$
\TBunG := \Psid_* \circ \Psid_!^{-1}:
\Dmod(\Bun_G)
\longto
\Dmod(\Bun_G).
$$
The essential image of $\TBunG$ is easy to identify and relevant to our discussion: indeed, in Section \ref{ssec:miraculous}, we will prove that
\begin{equation} \label{eqn:image of TBunG}
\TBunG(\Dmod(\Bun_G)) \simeq \Dmod(\Bun_G)^\stargen.
\end{equation}

\sssec{}

Thus, Conjecture \ref{conj:stargen = semisimple} is an immediate corollary of the following one.
Let $d_G := 2 \dim(\Bun_G) + \dim(Z_G)$.

\begin{mainconjbis}{conj:stargen = semisimple} \label{conj:spectrl T}
Under Langlands duality, the functor $\TBunG[- d_G]$ goes over to the functor
$$
\ICoh_\N(\LSGch)
\stackrel \Psi \tto
\QCoh(\LSGch)
\xto{\ul\St_\Gch \otimes - }
\QCoh(\LSGch)^\ss
\hto
\QCoh(\LSGch)
\stackrel \Xi \hto
\ICoh_\N(\LSGch),
$$
where
$$ 
\begin{tikzpicture}[scale=1.5]
\node (a) at (0,1) {$\QCoh(\LS_G) $};
\node (b) at (2.5,1) {$\ICoh_\N(\LS_G)$};
\path[right hook ->,font=\scriptsize,>=angle 90]
([yshift= 1.5pt]a.east) edge node[above] {$\Xi$} ([yshift= 1.5pt]b.west);
\path[->>,font=\scriptsize,>=angle 90]
([yshift= -1.5pt]b.west) edge node[below] {$\Psi$} ([yshift= -1.5pt]a.east);
\end{tikzpicture}
$$
is the standard adjunction. In short: the functor $\TBunG[-d_G]$ is Langlands dual to the composition of temperization with the action by $\St_\Gch$.
\end{mainconjbis}

\sssec{}

Let us explain how this statement ought to follow from the Langlands conjecture.
On the automorphic side, it was conjectured that
\begin{equation}  \label{eqn:T = DL}
\TBunG[-d_G]
\simeq
\DL_G, 
\end{equation}
where $\DL_G$ is the \emph{Deligne-Lusztig} functor 
$$
\DL_G
:=
\coker
 \Bigt{  
 \uscolim{P \in \Par'} \,  \Eis_P^\enh \circ \CT_P^\enh 
 \longto
 \id_{\Dmod(\Bun_G)}
  }
 :
 \Dmod(\Bun_G)
 \longto
 \Dmod(\Bun_G).
$$
Here, the functors $\Eis_P^\enh$ and $\CT_P^\enh$ are the \emph{enhanced} Eisenstein series and constant term functors, see \cite[Section 6.3]{Outline}. We will not need their definition, hence we do not recall it.

\begin{rem}
Let us briefly review the history of \eqref{eqn:T = DL}. For $G=SL_2$, this was conjectured by V. Drinfeld and J. Wang, see \cite[Appendix C]{DW}. For $G$ arbitrary, we learned the statement from a private communication with D. Gaitsgory; the statement essentially appears in \cite[Section 6.6 and especially Remark 6.6.5]{Wang}. Finally, while the present paper was undergoing the publication process, \eqref{eqn:T = DL} was proven by L. Chen in \cite{LinChen}.
\end{rem}

In view of \cite{LinChen}, the following remark is obsolete; we leave it for reference.

\begin{rem} 
As we learned from D. Gaitsgory, one way to prove \eqref{eqn:T = DL} goes by expressing the LHS via $\ol\Bun_G$, Drinfeld's compactification of the diagonal of $\Bun_G$. It is known that $\ol\Bun_G$ is naturally stratified by $\Par$, the (open) $G$-stratum yielding the identity functor. The question is then to prove that the $P$-stratum yields the functor $ \Eis_P^\enh \circ \CT_P^\enh$.
\end{rem}

\sssec{}

The postulated compatibility of geometric Langlands with enhanced constant terms and Eisenstein series, see \cite[Sections 6.6.4 and 6.6.5]{Outline}, predicts that $\DL_G$ corresponds to the similar looking functor on the spectral side:
$$
\DL_\Gch^\spec
:=
\coker
 \Bigt{  \uscolim{P \in \Par'} \, \Eis_\Pch^{\enh, \spec} \circ \CT_{\Pch}^{\enh, \spec} \longto \id_{\ICoh_\N(\LSGch)} }
 :
\ICoh_\N(\LSGch)
 \longto
\ICoh_\N(\LSGch).
$$
In this case, we do need the definitions of $\Eis_\Pch^{\enh, \spec}$ and $\CT_{\Pch}^{\enh, \spec}$: they are recalled in Section \ref{ssec:prelims}.
Using the techniques of \cite{AG2} and \cite{strong-gluing}, we will be able to simplify the functor $\DL_\Gch^\spec$ to obtain:

\begin{mainthm} \label{mainthm:DL factors thru QCoh}
The functor $\DL_G^\spec$ decomposes as
$$
\ICoh_\N(\LSG)
\stackrel \Psi \tto
\QCoh(\LSG)
\xto{\ul{\St}_G \otimes - }
\QCoh(\LSG)
\stackrel{\Xi}\hto
\ICoh_\N(\LSG).
$$
\end{mainthm}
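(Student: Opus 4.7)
The strategy is to unfold the definition of $\DL_G^\spec$ and recognize each composition $\Eis_\Pch^{\enh,\spec} \circ \CT_\Pch^{\enh,\spec}$ as the action of a quasi-coherent sheaf on $\ICoh_\N(\LSG)$; assembling the resulting colimit then yields the Steinberg construction. The key structural point is that, at the level of the cokernel defining $\DL_G^\spec$, the entire functor factors through the adjunction $\Xi \dashv \Psi$ between $\QCoh(\LSG)$ and $\ICoh_\N(\LSG)$.

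First I would translate the enhanced functors into $\IndCoh$ operations along $\p_P : \LSP \to \LSG$: using \cite{AG2} and \cite{strong-gluing}, $\CT_\Pch^{\enh,\spec}$ becomes (up to normalization) the $!$-pullback, and $\Eis_\Pch^{\enh,\spec}$ becomes the $\ICoh$-pushforward $(\p_P)_*^\ICoh$, both well-defined between the relevant $\ICoh_\N$ categories because the conormal to $\p_P$ lies in the nilpotent cone. I would then use a projection-formula argument on $\ICoh_\N$ to identify, for $\F \in \ICoh_\N(\LSG)$,
\begin{equation*}
(\p_P)_*^\ICoh \, (\p_P)^! (\F) \;\simeq\; \Xi\bigl((\p_P)_*(\omega_{\LSP}) \otimes \Psi(\F)\bigr),
\end{equation*}
where $\otimes$ denotes the $\QCoh(\LSG)$-action on $\ICoh_\N(\LSG)$. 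Two facts are crucial: (a) the object $(\p_P)_*(\omega_{\LSP})$, being the pushforward of the dualizing sheaf of the quasi-smooth stack $\LSP$, lies in the zero-singular-support subcategory $\QCoh(\LSG) \subset \ICoh_\N(\LSG)$; and (b) the content of $(\p_P)_*(\p_P)^!(\F)$ is controlled by the tempered part $\Psi(\F)$, which accounts for the factorization through $\Psi$.

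Once this is in place, the fact that $\Xi$ and the tensor action both commute with colimits and cokernels gives
\begin{equation*}
\DL_G^\spec(\F)
\simeq
\Xi\Bigl(\coker\bigl(\uscolim{P \in \Par'} (\p_P)_*(\omega_{\LSP}) \to \omega_{\LSG}\bigr) \otimes \Psi(\F)\Bigr)
= \Xi\bigl(\ul\St_G \otimes \Psi(\F)\bigr),
\end{equation*}
matching the definition of $\ul\St_G$ as the underlying quasi-coherent object of the Steinberg D-module $\St_G$.

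The main obstacle is the refined projection formula above, and specifically the factorization through $\Xi \circ (-) \circ \Psi$, which requires showing that $(\p_P)_*(\omega_{\LSP})$ has zero singular support on $\LSG$ and that the full functor $(\p_P)_*^\ICoh \circ (\p_P)^!$ descends along $\Psi$. Both statements reduce to fine geometric analyses of $\p_P$ relative to the nilpotent cone on $\LSG$, precisely the kind of computations carried out in \cite{AG2} and \cite{strong-gluing}.
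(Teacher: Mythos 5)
Your proposal breaks down at the claimed projection formula, and the resulting gap is not a technicality.

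You claim, for each individual proper $P$, the identity
\begin{equation*}
(\p_P)_*^\ICoh \, (\p_P)^! (\F) \;\simeq\; \Xi\bigl((\p_P)_*(\omega_{\LSP}) \otimes \Psi(\F)\bigr),
\end{equation*}
together with the assertions (a) that $(\p_P)_*(\omega_{\LSP})$ has zero singular support, and (b) that the comonad factors through $\Psi$. Neither (a) nor (b) is true. The map $\p_P : \LSP \to \LSG$ is quasi-smooth, and while $\omega_{\LSP}$ is indeed a shifted line bundle (so lies in $\QCoh(\LSP)$), the $\ICoh$-pushforward does not stay inside the zero-singular-support subcategory: by the singular-support propagation rules of \cite[Section~7]{AG1}, the singular support of $(\p_P)_*^\ICoh(\omega_{\LSP})$ is contained in the image under $\ft_{\p_P}$ of the conormal locus $\fs_{\p_P}^{-1}(\{0\})$, which is precisely the cone $\N_P$ of pairs $(\sigma_P, A)$ with $A$ a nilpotent element of $\fu_P$-type --- far from the zero section. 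Correspondingly, the comonad $\Eis_P^{\enh,\spec}\circ\CT_P^{\enh,\spec}$ (which, note, runs through the formal completion $(\LSG)^\wedge_{\LSP}$ and the intermediate category $\ICoh_{\N_{P,M}}$, not through $\ICoh(\LSP)$ directly) does \emph{not} annihilate the category of singularities $\ICoh_\N(\LSG)/\QCoh(\LSG)$, so it cannot factor through $\Psi$. In fact, the whole point of the Steinberg/Deligne--Lusztig construction is that each term $\Eis_P^{\enh,\spec}\circ\CT_P^{\enh,\spec}$ sees nontrivial singular support, and only the \emph{cokernel} over all $P$ (together with the identity functor) collapses to the zero section. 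Your first paragraph states this correctly, but your second paragraph then asserts the factorization termwise, which contradicts it.

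What actually forces the cokernel to factor through $\Psi$ is a nontrivial geometric input that your sketch never invokes: the homological contractibility of the glued Springer prestack $\Spr_\Glued^{\sigma, A} = \colim_{P \in \Par'} \Spr_P^{\sigma, A}$ for every nonzero nilpotent $A$, established in \cite{AG2}. The paper's proof passes to the atlas $\LSG^x \tto \LSG$ (so that the $\Dmod(\Sing)^\Rightarrow$-action on $\ICoh$ is literally available, rather than conjectural on the stack), identifies the kernel of the $P$-comonad as $(\p_P^\Sing)_{*}(\omega_{\N_P}) \in \Dmod(\N)$, forms the object $\F_\DL = \coker(\colim_P (\p_P^\Sing)_{*}(\omega_{\N_P}) \to \omega_\N)$, and then uses the contractibility statement fiberwise over $\N^\circ = \N - \LSG$ to show $\F_\DL$ is supported on the zero section. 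That is what makes $\DL_G^\spec$ kill the category of singularities, and only then does the restriction $\i^!(\F_\DL) \simeq \St_G$ emerge. Without this step your argument cannot go through; indeed, the intermediate assertion you'd need (each comonad descends along $\Psi$) is false.
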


\sssec{}

With this theorem proven, the assertion of Conjecture \ref{conj:spectrl T} is a corollary (modulo the geometric Langlands conjecture) of the combination of \eqref{eqn:image of TBunG} and Theorem \ref{mainthm:principal monoidal ideal}.

\begin{rem}

In the course of the proof of \eqref{eqn:image of TBunG}, we will see that, while $\TBunG$ is not even conservative, it is nevertheless fully faithful on compact objects. Hence, the same property must be true for $\DL_G$ and $\DL_G^\spec$. Combining this with the statement of Theorem \ref{mainthm:DL factors thru QCoh} led us to the statement of Theorem \ref{mainthm:St-fully faithful on CohN}.
\end{rem}

\ssec{Restoring the \virg{duality}}

Theorem \ref{mainthm:DL factors thru QCoh} implies that the Deligne-Lusztig functor $\DL_G^\spec$ is not a duality. However, Theorem \ref{mainthm:St-fully faithful on CohN} suggests a way to modify $\DL_G^\spec$ to make it into an equivalence.

\sssec{}

Let
$$
\Coh_\N^\St(\LSG) := \ul\St_G \otimes \Coh_\N(\LSG)
\subseteq \QCoh(\LSG).
$$
In other words, $\Coh_\N^\St(\LSG)$ is the essential image of the fully faithful functor appearing in Theorem \ref{mainthm:St-fully faithful on CohN}.
We also define
$$
\ICoh^\St_\N(\LSG)
:= \Ind(\Coh_\N^\St(\LSG)).
$$
This DG category comes with a tautological essentially surjective functor $\Psi_\St: \ICoh^\St_\N(\LSG) \to \QCoh(\LSG)^\ss$, induced by inclusion $\Coh_\N^\St(\LSG) \subseteq \QCoh(\LSG)^\ss$.
 
 \sssec{}
 
 Theorem \ref{mainthm:St-fully faithful on CohN} shows that the action of $\St_G$ yields an equivalence
$\DL_G^{\spec, \enh}: \ICoh_\N(\LSG) \simeq \ICoh_\N^\St(\LSG)$, which is ought to be Langlands dual to the  inverse of the miraculous duality. Likewise, $\Psi_\St$ is Langlands dual to the naive duality.

\medskip

Theorem \ref{mainthm:DL factors thru QCoh} shows that the square
$$ 
\begin{tikzpicture}[scale=1.5]
\node (01) at (0,1.2) {$\ICoh_\N(\LSG)$};
\node (11) at (3,1.2) {$\ICoh_\N^\St(\LSG)$};
\node (10) at (3,0) {$\QCoh(\LSG)^\ss$};
\node (00) at (0,0) {$\QCoh(\LSG)$};
\path[->,font=\scriptsize,>=angle 90]
(00.east) edge node[above] {$\ul\St_G \otimes -$} (10.west);
\path[->>,font=\scriptsize,>=angle 90]
(01.south) edge node[left] {$\Psi$}  (00.north);
\path[->,font=\scriptsize,>=angle 90]
(01.east) edge node[above] {$\DL_G^{\spec, \enh}$} node[below] {$\simeq$} (11.west);
\path[->,font=\scriptsize,>=angle 90]
(11.south) edge node[right] {$\Psi_\St$} node[left] {} (10.north);
\path[->,font=\scriptsize,>=angle 90]
(01.south east) edge node[right] {$\; \DL_G^\spec$} node[left] {} (10.north west);
\end{tikzpicture}
$$
is commutative. Langlands dually (and changing $G$ with $\Gch$), the above commutative diagram ought to read as
$$ 
\begin{tikzpicture}[scale=1.5]
\node (01) at (0,1.2) {$\Dmod(\Bun_G)$};
\node (11) at (3,1.2) {$\Dmod(\Bun_G)^\vee$};
\node (10) at (3,0) {$\Dmod(\Bun_G)^\stargen$,};
\node (00) at (0,0) {$\Dmod(\Bun_G)^\temp$};
\path[->,font=\scriptsize,>=angle 90]
(00.east) edge node[above] {$\ul\St_G \otimes -$} (10.west);
\path[->>,font=\scriptsize,>=angle 90]
(01.south) edge node[left] {$\temp$}  (00.north);
\path[->,font=\scriptsize,>=angle 90]
(01.east) edge node[above] {$\Psid_!^{-1}$} node[below] {$\simeq$} (11.west);
\path[->,font=\scriptsize,>=angle 90]
(11.south) edge node[right] {$\Psid_*[-d_G]$} node[left] {} (10.north);
\path[->,font=\scriptsize,>=angle 90]
(01.south east) edge node[right] {$\; \DL_G$} node[left] {} (10.north west);
\end{tikzpicture}
$$
where the tensor product on the bottom line denotes the action of $\QCoh(\LSGch)$ on $\Dmod(\Bun_G)$ given by the vanishing theorem of \cite[Section 4.5]{Outline}.

\ssec{Compatibility with Eisenstein series}

To conclude the introduction, we ask how the enhanced Deligne-Lusztig duality interacts with Eisenstein series. (This can be skipped by the reader, as it will not be used anywhere in the paper.) 
In other words, we wish to describe the rightmost vertical functor in the following commutative diagram:
$$ 
\begin{tikzpicture}[scale=1.5]
\node (01) at (0,1) {$\ICoh_\N(\LSG)$};
\node (11) at (3,1) {$\ICoh_\N^\St(\LSG)$};
\node (00) at (0,0) {$\ICoh_\N(\LSM)$};
\node (10) at (3,0) {$\ICoh_\N^\St(\LSM)$.};
\path[->,font=\scriptsize,>=angle 90]
(00.east) edge node[above]  {$\DL_M^\enh$}  node[below] {$\simeq$} (10.west);
\path[<-,font=\scriptsize,>=angle 90]
(01.south) edge node[left] {$\Eis_P$}  (00.north);
\path[->,font=\scriptsize,>=angle 90]
(01.east) edge node[above] {$\DL_G^{\enh}$} node[below] {$\simeq$} (11.west);
\path[<-,font=\scriptsize,>=angle 90]
(11.south) edge node[right] {$\Eis_P^\St$} node[left] {} (10.north);
\end{tikzpicture}
$$
To this end, consider the functor\footnote{As usual, the notation $\ul \F$ denotes the quasi-coherent sheaf underlying the D-module $\F$.}
$$
\QCoh(\LSM) \longto \QCoh(\LSG),
\hspace{.5cm}
\F_M
\squigto
(\p_P)_* 
\Bigt{
\ul{
(\i_P)_{ *,\dR} (\omega_{\LSM,\dR})
}
\otimes
\q_P^*(\F_M)
}.
$$
Theorem \ref{mainthm: StG vs StM} shows that such functor sends $\Coh_\N^\St(\LSM)$ to $\Coh_\N^\St(\LSG)$. Ind-completing, we obtain a functor $\Eis_P^\St$ that makes the square commutative by inspection.

\ssec{Structure of the paper}

The rest of the paper is devoted to proving our main results, in a different order than the one presented in the introduction: Theorem \ref{mainthm:starext-ortho-to-omega} in Section \ref{sec:divergence}, Theorem \ref{mainthm:principal monoidal ideal} in Section \ref{sec:divisibilty by St}, Theorem \ref{mainthm: StG vs StM} in Section \ref{sec:funct eqn St}, Theorem \ref{mainthm:St-fully faithful on CohN} in Section \ref{sec:St fully faith on CohN} and Theorem \ref{mainthm:DL factors thru QCoh} in Section \ref{sec: DL spectral sends all to temp}.

\ssec{Conventions and notation} \label{ssec: notation}

We will mostly use the conventions of \cite[Section 2]{antitemp} and \cite{strong-gluing}.

\sssec{}

To shorten formulas, we use the notation $\ul{\M} := \oblv_L(\M)$ to indicate the quasi-coherent sheaf underlying a $\Dmod$-module $\M$. Here, $\oblv_L: \Dmod(-) \to \QCoh(-)$ denotes the \virg{left} forgetful functor, from D-modules to quasi-coherent sheaves. There will be also a \virg{right} forgetful functor $\oblv_R: \Dmod(-) \to \ICoh(-)$.
We regard $\Dmod(\Y)$ as a symmetric monoidal DG category under $\sotimes$.
When dealing with D-modules, we often write $f_*$ instead of the more precise $f_{*,\dR}$, hoping that the real meaning will be clear from the context. For instance, in the expressions $\RHom_{\Dmod(\Y)}(f_*(\M), \N)$ and $\ul{f_*(\M)}$, it should be clear that both push-forwards are de Rham ones.

\ssec{Acknowledgements}

I am grateful to I. Grojnowski for several useful discussions and K. McGerty for referring me to the paper \cite{Lu}, which prompted Theorem \ref{mainthm:principal monoidal ideal}. 
Thanks also to D. Gaitsgory and B. To\"en for help with the notion of semisimplicity for local systems.
Research supported by ERC-2016-ADG-741501.

\sec{Divergence at infinity on $\Bun_G$} \label{sec:divergence}

In this section, we give details on the phenomenon of divergence at infinity on the stack $\Bun_G$ and prove Theorem \ref{mainthm:starext-ortho-to-omega}.

\ssec{Miraculous duality, $!$-extensions, $*$-extensions} \label{ssec:miraculous}

\sssec{}

It is established in \cite{DG-cptgen} that any quasi-compact open substack of $\Bun_G$ is contained in a quasi-compact open substack $U$ with the following remarkable property: the $!$-pushforward $(j_U)_!$ along the open embedding $j_U: U \hto \Bun_G$ is well-defined on the entire $\Dmod(U)$.
Quasi-compact opens of $\Bun_G$ with this property are called \emph{cotruncative}.
The actual construction of such open substacks is not important for us: we refer to \cite{DG-cptgen} for details.

We denote by $\Cotrnk$ the $1$-category of cotruncative open substacks of $\Bun_G$; any finite union of cotruncative substacks is cotruncative, so that $\Cotrnk$ is filtered.

\sssec{}

Another property of $\Bun_G$ of similar kind is the fact that the functor $(p_{\Bun_G})_! : \Dmod(\Bun_G) \to \Vect$ is well-defined. This follows from the contractibility of the space of rational maps into $G$, together with the ind-properness of the Beilinson-Drinfeld Grassmannian (see \cite[Corollary 5.3.2]{contract} for details).

\sssec{Terminology}

When we say that $\F \in \Dmod(\Bun_G)$ is a $!$-extension, we mean that there exist a \emph{quasi-compact open} $U$ such that $\F \simeq (j_U)_!(j_U^! \F)$. Without loss of generality, we can assume such $U$ to be cotruncative.
The term $*$-extension is used accordingly.

\sssec{}

It is clear that $\Dmod(\Bun_G)$ is generated by $!$-extensions, that is,
$$
\Dmod(\Bun_G)
\simeq
\uscolim{U \in \Cotrnk} 
\;
(j_U)_! (\Dmod(U)).
$$
Moreover, any compact object of $\Dmod(\Bun_G)$ is of the form $(j_U)_!(\F_U)$ for some $U \in \Cotrnk$ and some compact $\F_U$.

\sssec{}

As already discussed in the introduction, denote by $\Dmod(\Bun_G)^\stargen$ the full subcategory of $\Dmod(\Bun_G)$ generated under colimits by $*$-extensions. Note that the functor $(j_U)_{*} : \Dmod(U) \to \Dmod(\Bun_G)$ does not preserves compactness in general. 

\sssec{}

Recall now the miraculous duality of $\Bun_G$ and the functor $\TBunG := \Psid_* \circ \Psid_!^{-1}$. It is proven in \cite[Lemma 4.5.7]{DG-cptgen} that any cotruncative open substack of $\Bun_G$ is also miraculous. For any QCA stack $\Y$, the functor $\Psid_{\Y,*}$ is an equivalence: this is our standard way to identify $\Dmod(\Y)$ with its dual. Hence, for $U \in \Cotrnk$ (and in fact for any miraculous QCA stack), we regard the functor $\Psid_{U,!}$ as a self equivalence of $\Dmod(U)$.

\sssec{} \label{sssec:TBunG is fully faithful on pairs (any, !-ext)}

Thanks to \cite[Lemma 4.4.12]{DG-cptgen}, for any $U \in \Cotrnk$, we have
\begin{equation}\label{eqn:TBunG on !-ext}
\TBunG ((j_U)_!(\F_U))
\simeq
(j_U)_{*}  (\Psid_{U,!}^{-1}(\F_U)).
\end{equation}
It follows that $\TBunG$ is fully faithful on $!$-extensions (in particular: on compact objects), and thus, by taking colimits in the first variable, fully faithful on pairs $(\on{any}, \on{!-ext})$. The latter means that, for any $\F \in \Dmod(\Bun_G)$ and any $*$-extension $ (j_U)_!(\F_U)$, the functor $\TBunG$ yields an isomorphism
$$
\RHom_{\Dmod(\Bun_G)} (\F,(j_U)_!(\F_U) )
\xto{\;\; \simeq \;\;}
\RHom_{\Dmod(\Bun_G)} \bigt{ 
\TBunG(\F), \TBunG ((j_U)_!(\F_U)) 
}.
$$

\begin{rem}
On the other hand, $\TBunG$ is not fully faithful on the entire $\Dmod(\Bun_G)$. In fact, it is not even conservative, as
$$
\TBunG(\omega_{\Bun_G}) \simeq 0.
$$
To show this, follow the argument of \cite{ker-adj} and invoke \cite[Corollary 1.4.2]{antitemp} when proving that $\omega_{\Gr_G}$ is infinitely connective.
\end{rem}

\begin{lem}
The essential image of $\TBunG$ equals $\Dmod(\Bun_G)^\stargen$.
\end{lem}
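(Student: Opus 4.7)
The plan is to prove the two inclusions separately, exploiting the formula \eqref{eqn:TBunG on !-ext} together with the fact that $\TBunG$ is continuous (it is the composition of two continuous functors).

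For the inclusion $\TBunG(\Dmod(\Bun_G)) \subseteq \Dmod(\Bun_G)^\stargen$, I would start from the decomposition
$$
\Dmod(\Bun_G) \simeq \uscolim{U \in \Cotrnk} (j_U)_!(\Dmod(U))
$$
recalled above, which says that every object $\F \in \Dmod(\Bun_G)$ may be written as a colimit of $!$-extensions $(j_U)_!(\F_U)$ from cotruncative opens. Applying continuity of $\TBunG$ and \eqref{eqn:TBunG on !-ext}, we get
$$
\TBunG(\F) \simeq \colim_i\, (j_{U_i})_{*,\dR}\bigl(\Psid_{U_i,!}^{-1}(\F_{U_i})\bigr),
$$
which by definition lies in $\Dmod(\Bun_G)^\stargen$.

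For the reverse inclusion, I would first reduce to $*$-extensions from cotruncative opens: any quasi-compact open $V \subseteq \Bun_G$ is contained in a cotruncative one $U$, and $*$-extension from $V$ factors as $*$-extension from $U$ applied to $*$-pushforward along $V \hto U$, so $\Dmod(\Bun_G)^\stargen$ is also generated under colimits by $*$-extensions from $\Cotrnk$. Now for such a $*$-extension $(j_U)_{*,\dR}(\G_U)$, since $\Psid_{U,!}$ is a self-equivalence of $\Dmod(U)$, formula \eqref{eqn:TBunG on !-ext} gives
$$
(j_U)_{*,\dR}(\G_U)
\simeq
(j_U)_{*,\dR}\bigl(\Psid_{U,!}^{-1}(\Psid_{U,!}(\G_U))\bigr)
\simeq
\TBunG\bigl((j_U)_!(\Psid_{U,!}(\G_U))\bigr),
$$
so every such $*$-extension lies in the essential image.

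Finally, to propagate this to arbitrary objects of $\Dmod(\Bun_G)^\stargen$, I would observe that the essential image of a continuous functor between presentable DG categories is closed under the colimits coming from the source: writing any $\G \in \Dmod(\Bun_G)^\stargen$ as $\colim_i \TBunG(\F_i)$ with $\F_i$ the preimages constructed above, continuity of $\TBunG$ yields $\G \simeq \TBunG(\colim_i \F_i)$. The only subtle point—really the main thing to check carefully—is that the reduction to cotruncative opens in the definition of $\Dmod(\Bun_G)^\stargen$ is harmless, and that the generators admit preimages compatible enough for the colimit to transport through $\TBunG$; once formula \eqref{eqn:TBunG on !-ext} and the self-duality of $\Psid_{U,!}$ on cotruncative $U$ are granted, the argument is essentially formal.
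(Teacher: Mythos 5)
Your argument is essentially the same as the paper's up to the last step, and there it leaves a real gap. You correctly note that the essential image of a continuous functor is closed under colimits \emph{that come from the source}, i.e.\ colimits of diagrams that lift along $\TBunG$. You then flag — correctly — that the crux is to lift the diagram $i \squigto (j_{U_i})_{*,\dR}(\F_{U_i})$ to a diagram in $\Dmod(\Bun_G)$; but you go on to assert that this is \emph{essentially formal} from formula~\eqref{eqn:TBunG on !-ext} and the self-duality of $\Psid_{U,!}$. That assertion is where the argument breaks. Formula~\eqref{eqn:TBunG on !-ext} gives you, for each $i$ individually, a preimage $(j_{U_i})_!(\Psid_{U_i,!}(\F_{U_i}))$ of the object $(j_{U_i})_{*,\dR}(\F_{U_i})$; it says nothing by itself about lifting the \emph{transition morphisms} of the diagram. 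Without that, writing $\G \simeq \colim_i \TBunG(\F_i)$ is not yet meaningful, because the source-side diagram $i \squigto \F_i$ has not been constructed.

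The paper closes this gap using a fact established just before (in \ref{sssec:TBunG is fully faithful on pairs (any, !-ext)}): $\TBunG$ is \emph{fully faithful on $!$-extensions} (in particular on compact objects). Since each preimage $(j_{U_i})_!(\Psid_{U_i,!}(\F_{U_i}))$ is a $!$-extension from a cotruncative open, and $\TBunG$ restricted to the full subcategory of $!$-extensions is fully faithful with essential image the $*$-extensions, the whole diagram of $*$-extensions lifts uniquely to a diagram of $!$-extensions. Only then does continuity of $\TBunG$ let you push the colimit through. So the missing ingredient is precisely the fully faithfulness statement; once you invoke it, your outline becomes a complete proof, and it then coincides with the paper's.
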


\begin{proof}
Any object of $\Dmod(\Bun_G)$ is a colimit of $!$-extensions from cotruncative (hence miraculous) open substacks: \eqref{eqn:TBunG on !-ext} then shows that the essential image is contained in $\Dmod(\Bun_G)^\stargen$.
By the same formula, any $*$-extension belongs to the essential image of $\TBunG$. It remains to show that such essential image is closed under colimits. In other words, we need to show that, for any index $\infty$-category $\I$ and any functor 
$$
\I \to \Dmod(\Bun_G),
\hspace{.4 cm}
i \squigto (j_{U_i})_*(\F_i),
$$
there exists an object $\F$ such that $\TBunG(\F) \simeq \colim_i  (j_{U_i})_*(\F_i)$.
Without loss of generality, we can assume that each $U_i$ is cotruncative. Then the assertion follows from the fully faithfulness of $\TBunG$ on $!$-extensions.
\end{proof}

\ssec{Proof of Theorem \ref{mainthm:starext-ortho-to-omega}}

The following result shows that the inclusion $\Dmod(\Bun_G)^\stargen \subseteq \Dmod(\Bun_G)$ is actually very strict (for $G$ non-abelian): any object of $\Dmod(\Bun_G)^\stargen$ has no de Rham cohomology with compact supports.

\begin{thm}
Let $G$ be a reductive group of semisimple rank $\geq 1$. For any quasi-compact open $U \subset \Bun_G$, the functor $(p_{\Bun_G})_! \circ (j_U)_{*} : \Dmod(U) \to \Vect$ is identically zero.
\end{thm}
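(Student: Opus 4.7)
The plan is to observe that this theorem is Vect-dual to Theorem \ref{mainthm:starext-ortho-to-omega} and to prove the two statements in tandem, leveraging the vanishing $\TBunG(\omega_{\Bun_G})\simeq 0$ recorded in the preceding remark.

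First I would reformulate. The adjunction $(p_{\Bun_G})_!\dashv(p_{\Bun_G})^!$, combined with $(p_{\Bun_G})^!(\kk)\simeq\omega_{\Bun_G}$, gives for every $\F\in\Dmod(\Bun_G)$ a canonical identification
$$\bigl((p_{\Bun_G})_!(\F)\bigr)^{\vee} \;\simeq\; \RHom_{\Dmod(\Bun_G)}(\F,\,\omega_{\Bun_G}).$$
Since Vect-duality is conservative, the theorem reduces to Theorem \ref{mainthm:starext-ortho-to-omega}; the hypotheses match, since a reductive $G$ is non-abelian exactly when it has semisimple rank $\ge 1$.

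Next I would reduce to $U$ cotruncative. By \cite{DG-cptgen}, $U$ embeds in a cotruncative quasi-compact open $V$, and the factorization $(j_U)_{*,\dR}(\F_U)\simeq(j_V)_{*,\dR}\bigl((j_{U,V})_{*,\dR}(\F_U)\bigr)$, where $j_{U,V}:U\hookrightarrow V$, brings us to the case $V\in\Cotrnk$. For such $V$, formula \eqref{eqn:TBunG on !-ext} represents any $*$-extension in the image of $\TBunG$, namely $(j_V)_{*,\dR}(\F_V)\simeq\TBunG\bigl((j_V)_!(\G)\bigr)$ with $\G := \Psid_{V,!}(\F_V)$, so the problem becomes $\RHom\bigl(\TBunG((j_V)_!(\G)),\omega_{\Bun_G}\bigr)\simeq 0$. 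Passing to the right adjoint $\TBunG^R$ of $\TBunG$ (which exists by the adjoint functor theorem, $\TBunG$ being a colimit-preserving functor between presentable DG categories), this rewrites as
$$\RHom_{\Dmod(\Bun_G)}\bigl((j_V)_!(\G),\,\TBunG^R(\omega_{\Bun_G})\bigr),$$
so it suffices to show $\TBunG^R(\omega_{\Bun_G})\simeq 0$.

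For that last vanishing I would exploit the fact that $\TBunG=\Psid_*\circ\Psid_!^{-1}$ is built from kernel functors whose kernels on $\Bun_G\times\Bun_G$ are \emph{symmetric} across the diagonal (namely $\Delta_{*,\dR}(\omega_{\Bun_G})$ and $\Delta_!(\kk_{\Bun_G})$). Under the miraculous self-duality $\Psid_!:\Dmod(\Bun_G)^{\vee}\xrightarrow{\sim}\Dmod(\Bun_G)$, this symmetry ought to identify $\TBunG^R$ with (a cohomological shift of) $\TBunG$ itself, after which $\TBunG^R(\omega_{\Bun_G})\simeq\TBunG(\omega_{\Bun_G})\simeq 0$ by the preceding remark.

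The main obstacle is precisely this last identification of $\TBunG^R$: since $\Psid_*$ is \emph{not} an equivalence on $\Bun_G$, one cannot simply invert it, and carrying the kernel symmetry through the miraculous self-duality requires care. A parallel route that avoids $\TBunG^R$ altogether would be to argue on the $(p_{\Bun_G})_!$-side directly, using the Ran-space/BD-Grassmannian presentation of $(p_{\Bun_G})_!$ from \cite{contract}: the vanishing then becomes a formal consequence of the infinite connectivity of $\omega_{\Gr_G}$ from \cite[Corollary 1.4.2]{antitemp}---the same input that delivers $\TBunG(\omega_{\Bun_G})\simeq 0$ in the first place.
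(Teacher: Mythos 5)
Your reduction of the statement to $\TBunG^R(\omega_{\Bun_G}) \simeq 0$ is correct, and you have isolated the right ingredients: $\TBunG(\omega_{\Bun_G}) \simeq 0$ together with full faithfulness of $\TBunG$ on pairs $(\on{any}, \on{!-ext})$. But the passage from $\TBunG(\omega_{\Bun_G}) \simeq 0$ to $\TBunG^R(\omega_{\Bun_G}) \simeq 0$ is a genuine gap, as you yourself flag. Symmetry of the kernels $\Delta_*(\omega_{\Bun_G})$ and $\Delta_!(k_{\Bun_G})$ under the flip identifies a kernel functor with its \emph{dual}, not with its right adjoint; the two coincide only after conjugating by self-dualities of source and target, and on $\Bun_G$ the naive self-duality $\Psid_*$ is precisely not an equivalence. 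So ``$\TBunG^R \simeq \TBunG$ up to shift'' cannot be asserted without substantial further argument, and it is not what the paper proves.

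The paper avoids $\TBunG^R$ by working at the coherent level. It first deduces $\RHom_{\Dmod(\Bun_G)}(k_{\Bun_G}, (j_U)_! \F_U) \simeq 0$ from $\TBunG(k_{\Bun_G}) \simeq 0$ and full faithfulness on $!$-extensions, writes this as a limit over $U' \in \Cotrnk_{U/}$ of $\RHom_{\Dmod(U')}(k_{U'}, (j_{U\to U'})_! \F_U)$, and then applies Verdier duality on each $U'$ --- valid because all objects in sight are coherent --- to obtain $\RHom_{\Dmod(U')}((j_{U\to U'})_*(\DD_U \F_U), \omega_{U'}) \simeq 0$. Passing to the continuous right adjoint $(j_{U\to U'})^?$ (available by cotruncativeness) and using that $\DD_U$ is an involution on $\Dmod(U)^\cpt$, one concludes $\lim_{U'}(j_{U\to U'})^? \omega_{U'} \simeq 0$, which gives the theorem by adjunction. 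This Verdier-duality step at the coherent level is precisely what your proposal is missing: it is the mechanism converting the $(!,k)$-orthogonality you can prove into the $(*,\omega)$-orthogonality you want, without ever identifying $\TBunG^R$. The Ran-space route you gesture at is left entirely unelaborated, so as written neither branch of the proposal closes the gap.
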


Since $(p_{\Bun_G})_!$ is left adjoint to $(p_{\Bun_G})^!$, this theorem is equivalent to Theorem \ref{mainthm:starext-ortho-to-omega}.

\begin{proof}
We proceed in six steps.

\sssec*{Step 1}

Without loss of generality, we may assume that $U$ is cotruncative.
By adjunction, we need to show that $\RHom_{\Dmod(\Bun_G)}((j_{U})_*\F_U, \omega_{\Bun_G}) \simeq 0$ for any $\F_U \in \Dmod(U)^\cpt$.
Tautologically, we have
$$
\RHom_{\Dmod(\Bun_G)}((j_{U})_*\F_{U}, \omega_{\Bun_G})
\simeq
\lim_{U' \in (\Cotrnk_{U/})^\op}
\RHom_{\Dmod(U')}
\bigt{
(j_{U \to U'})_*\F_U,
\omega_{U'}
},
$$
where $j_{U \to U'}: U \hto U'$ is the structure inclusion.

\sssec*{Step 2}

Now, note that the functor $(j_{U \to U'})_*: \Dmod(U) \to \Dmod(U')$ admits a continuous right adjoint, which will be denoted by $(j_{U \to U'})^?$. This follows from the definition of cotruncativeness: indeed, the functor $(j_{U \to U'})_!$ is clearly defined and $(j_{U_0 \to U})^?$ is tautologically its dual (under the standard self dualities of the DG category of D-modules on a QCA stack, see \cite{finiteness}).

\sssec*{Step 3}
Hence, 
\begin{eqnarray}
\nonumber
\RHom_{\Dmod(\Bun_G)}((j_{U})_*\F_{U}, \omega_{\Bun_G})
& \simeq &
\lim_{U' \in (\Cotrnk_{U/})^\op }
\RHom_{\Dmod(U)}
\bigt{
\F_U,
(j_{U \to U'})^?
\omega_{U'}
}
\\
\nonumber
& \simeq &
\RHom_{\Dmod(U)}
\Bigt{
\F_U,
\lim_{U' \in (\Cotrnk_{U/})^\op }
(j_{U \to U'})^?
\omega_{U'}
}.
\end{eqnarray}
Thus, the theorem is equivalent to proving that, for any $U$, we have:
$$
\lim_{U' \in (\Cotrnk_{U/})^\op }
(j_{U \to U'})^?
\omega_{U'}
\simeq 0.
$$

\sssec*{Step 4}

Let $k_{\Bun_G}$ be the constant sheaf on $\Bun_G$, that is, the Verdier dual of $\omega_{\Bun_G}$. By smoothness, we have $k_{\Bun_G}[2 \dim(\Bun_G)] \simeq \omega_{\Bun_G}$.
We claim that 
\begin{equation} \label{eqn:Hom from k}
\RHom_{\Dmod(\Bun_G)}
\bigt{
k_{\Bun_G}, (j_{U})_! \F_U
}
\simeq
0.
\end{equation}
This is immediate from the discussion of Section \ref{sssec:TBunG is fully faithful on pairs (any, !-ext)} and the remark following it.

\sssec*{Step 5}

Starting from  \eqref{eqn:Hom from k}, we obtain that 
$$
0
\simeq
\RHom_{\Dmod(\Bun_G)}
\bigt{
k_{\Bun_G}, (j_{U})_! \F_U
}
\simeq
\lim_{U' \in (\Cotrnk_{U/})^\op }
\RHom_{\Dmod(U)}
\bigt{
k_{U'},
(j_{U \to U'})_!
\F_{U}
}.
$$
The objects appearing on the RHS are all coherent: hence, we can apply Verdier duality to obtain
$$
\lim_{U' \in (\Cotrnk_{U/})^\op }
\RHom_{\Dmod(U)}
\bigt{
(j_{U \to U'})_*
(\DD_U \F_{U} ),
\omega_{U'}
}
\simeq
0.
$$

\sssec*{Step 6}

By adjunction (using cotruncativeness), we rewrite the LHS as
$$
\lim_{U' \in (\Cotrnk_{U/})^\op }
\RHom_{\Dmod(U)}
\bigt{
\DD_U \F_{U} ,
(j_{U \to U'})^? \omega_{U'}
}
$$
and further as
$$
\RHom_{\Dmod(U)}
\bigt{
\DD_U \F_{U} ,
\lim_{U' \in (\Cotrnk_{U/})^\op }
(j_{U \to U'})^? \omega_{U'}
}.
$$
Since $\DD_U$ is an involution on $\Dmod(U)^\cpt$, we deduce that 
$$
\lim_{U' \in (\Cotrnk_{U/})^\op }
(j_{U \to U'})^? \omega_{U'}
\simeq 0,
$$
which is what we were looking for.
\end{proof}

\sssec{}

As a corollary of the vanishing of $(p_{\Bun_G})_! \circ j_*$, we deduce that, for any $\F \in \Dmod(\Bun_G)$ and any $Z = \Bun_G - U$ with $U$ cotruncative, we have
$$
(p_{\Bun_G})_!(\F)
\simeq
(p_Z)_!(i_Z^!(\F)).
$$
This means that $\F$ and any of its \virg{tails} have the same cohomology with compact support. In particular, taking $\F = \omega_{\Bun_G}$ in the above formula and dualizing, we obtain that pullback in de Rham cohomology yields the isomorphism
$$
H^*_\dR(\Bun_G)
\simeq
H^*_\dR(\Bun_G -U).
$$

\sec{Proof of Theorem \ref{mainthm:DL factors thru QCoh} } \label{sec: DL spectral sends all to temp}

Since from now on we only consider the spectral side of geometric Langlands, let us switch $\Gch$ with $G$ and consider the endo-functor $\DL_G^\spec$ of $\ICoh_\N(\LSG)$.
First, we need to show that such functor annihilates the subcategory of $\ICoh_\N(\LSG)$ right orthogonal to $\QCoh(\LSG)$.
This will already imply that $\DL_G^\spec$ factors as
$$
\ICoh_\N(\LSG)
\xto{\Psi}
\QCoh(\LSG)
\to
\QCoh(\LSG)
\xto{\Xi}
\ICoh_\N(\LSG),
$$
where the middle arrow is the action by a D-module on $\LSG$.
Second, we will identify such D-module with the Steinberg D-module $\St_G$.

\ssec{Singular support and enhanced Eisenstein series} \label{ssec:prelims}

We assume familiarity with the theory of singular support for coherent sheaves on quasi-smooth stacks, see \cite{AG1} and \cite{AG2}.
We also assume some familiarity with the theory of $\H$, as developed in \cite{shvcatH} and in \cite{strong-gluing}. The latter two references are not strictly necessary for the proof, but they help streamline the argument.

\sssec{}

As the stack $\LSG$ is quasi-smooth, ind-coherent sheaves on it get assigned a singular support in $\Sing(\LSG)$. 
Recall that $\Sing(\LSG)$ parametrizes pairs $(\sigma,A)$ where $\sigma$ is a $G$-local system and $A$ a horizontal section of the flat vector bundle $\g^*_{\sigma}$. Let $\N \subset \Sing(\LSG)$ denote the \emph{global nilpotent cone}, that is, the closed conical locus defined by the requiring that $A$ be nilpotent.

\sssec{}

For a schematic map $f: \X \to \Y$ of quasi-smooth stacks, we denote by $\Y^\wedge_{\X}$ the formal completion of $f$ and by $\ICoh(\Y^\wedge_\X)$ the DG category of ind-coherent sheaves on it. Let $\wh f: \Y^\wedge_\X \to \Y$ the map induced by $f$.
For $\M \subset \Sing(\X)$ a closed conical subset, set $\ICoh_\M(\Y^\wedge_\X)$ to be the fiber product DG category
$$
\ICoh_\M(\Y^\wedge_\X)
:=
\ICoh(\Y^\wedge_\X)
\ustimes{\ICoh(\X)}
\ICoh_\M(\X).
$$
These notations all agree with the ones used in \cite{AG2}.

\sssec{} \label{sssec:propagation of sing supp}

We need to recall the \emph{rule of propagation of singular support} under pushforwards, see \cite[Section 7]{AG1}. Given $f:\X \to \Y$ as above, we have two natural maps 
$$
\Sing(\X)
\xleftarrow{\fs_f}
\X \times_\Y \Sing(\Y) \xto{\ft_f} \Sing(\Y)
$$
at the level of the spaces of singularities: $\fs_f$ is the \emph{singular codifferential}, while $\ft_f$ is simply the projection to the second component.
Now, let $\M \subseteq \Sing(\X)$ and $\N \subseteq \Sing(\Y)$ be closed conical subsets.

\begin{prop} \label{prop:propagation of sing supp}
With the above notation, assume further that $\ft_f \circ (\fs_f)^{-1}(\M) \subseteq \N$. Then:
\begin{itemize}
\item
$f_*^\ICoh: \ICoh(\X) \to \ICoh(\Y)$ restricts to a functor $f_*^\ICoh: \ICoh_\M(\X) \to \ICoh_\N(\Y)$;
\smallskip
\item
$(\wh f)_*^\ICoh: \ICoh(\Y^\wedge_\X) \to \ICoh(\Y)$ restricts to a functor 
$$
(\wh f)_*^\ICoh: \ICoh_\M(\Y^\wedge_\X) \to \ICoh_\N(\Y).
$$
\end{itemize} 
\end{prop}

\begin{proof}
The first item is \cite[Lemma 8.4.5]{AG1}. The second item is an immediate consequence: the essential image of the push-forward functor $\ICoh_\M(\X) \to \ICoh_\M(\Y^\wedge_\X)$ along $\X \to \Y^\wedge_\X$ generates the target under colimits.
\end{proof}

\sssec{} \label{sssec:Eis-enh definition}

The definition of the \emph{enhanced Eisenstein series} functor
$$
\Eis_P^{\enh, \spec}:
\ICoh_{\N_{P,M}}((\LSG)^\wedge_{\LSP})
\xto{(\wh\p_P)_*^\ICoh}
\ICoh_\N(\LSG)
$$
goes as follows:
\begin{itemize}

\item
by definition, the substack $\N_{P,M} \subseteq \Sing(\LSP)$ parametrizes pairs $(\sigma_P, A_M)$, where $\sigma_P$ is a $P$-local system and $A_M$ is a nilpotent horizontal section of $\fm^*_{\sigma_P}$; 
\item
as is standard, $\p_P: \LSP \to \LSG$ denotes the induction map, so the functor $(\wh\p_P)_*^\ICoh$ is the $\ICoh$-pushforward along the map $(\LSG)^\wedge_{\LSP} \to \LSG$;
\item
a simple application of the rule of propagation of singular support ensures that this functor does indeed send $\ICoh_{\N_{P,M}}((\LSG)^\wedge_{\LSP})$ to $\ICoh_\N(\LSG)$.
\end{itemize}

\sssec{} \label{sssec:CT-enh definition}

The enhanced constant term functor $\CT_P^{\enh, \spec}$ is, by definition, the right adjoint to $\Eis_P^{\enh, \spec}$. Tautologically, it can be expressed as the composition
$$
\CT_P^{\enh, \spec}:
\ICoh_\N(\LSG)
\hto
\ICoh(\LSG)
\xto{(\wh\p_P)^{!,\ICoh}}
\ICoh((\LSG)^\wedge_{\LSP})
\xto{\Psi}
\ICoh_{\N_{P,M}}((\LSG)^\wedge_{\LSP}),
$$
where the rightmost functor is the natural projection (right adjoint to the obvious inclusion).

\sssec{}

By adjunction, the assignment $P \squigto \Eis_P^{\enh, \spec} \circ \CT_P^{\enh, \spec}$ upgrades to a functor 
$$
\Par' \longto \Fun(\ICoh_\N(\LSG), \ICoh_\N(\LSG)).
$$
By adjunction again, we obtain a natural arrow
$$
\uscolim{P \in \Par'} \;
\Eis_P^{\enh, \spec} \circ \CT_P^{\enh, \spec}
\longto
\id_{\ICoh_\N(\LSG)}
$$
whose cone is by definition the functor $\DL_G^\spec$.

\ssec{Proof of Theorem \ref{mainthm:DL factors thru QCoh}}

The proof rests on a contractibility statement proven in \cite{AG2}, to which we reduce via a \virg{microlocal} argument as in \cite[Section 2.2-2.3]{strong-gluing}.

\sssec{}

By construction, for any $P \in \Par$, the functor $\Eis_P^{\enh, \spec} \circ \CT_P^{\enh, \spec}$ commutes with the action of $\H(\LS_G)$, so the same holds true for $\DL_G^\spec$. 
Hence, we expect the functor to be given by the action of an object $\F_\DL \in \Dmod(\N)^\unshift$: indeed, we conjecture that
$$
\End_{\H(\Y)} (\ICoh_\N(\Y))
\simeq
\Dmod(\N)^{\unshift}.
$$

\sssec{}

To work around this conjecture, we work on the smooth atlas $\LSG^{x} \tto \LSG$ obtained by choosing a point $x \in X$ and by considering $G$-local systems with a trivialization at $x$. By \cite[Section 10.6]{AG1}, we know that $\LSG^{x}$ is a global complete intersection scheme.

The reason this fact is useful is that, for $Y$ a global complete intersection scheme, we know that $\Dmod(\Sing(Y))^\Rightarrow$ acts on $\ICoh(Y)$, see \cite[Section 2.2]{strong-gluing}. Let us denote such action by $\ast$.

\sssec{A notational convention}

For $Z$ a space over $\LSG$, denote by $Z^x := Z \times_{\LSG} \LSG^x $ its pullback along the atlas. Similarly, for a map $f: Z_1 \to Z_2$ of spaces over $\LSG$, denote by $f^x: Z_1^x \to Z_2^x$ the base-changed map.
For instance, 
$$
\N^x = \LSG^x \times_{\LSG} \N,
\hspace{.4cm}
\LSP^x =  \LSP \ustimes{\LSG} \LSG^x,
$$
and
\begin{equation} \label{eqn:pP pulled back to LSGx}
\fp_P^x:
\LSP^x = \LSP \ustimes{\LSG} \LSG^x 
\longto \LSG^x
\end{equation}
is the natural induction map.

\sssec{}

Now consider the DG category $\ICoh_{\N^x}(\LSG^x)$, which is acted upon by $\Dmod(\N_x)^\Rightarrow$. By \virg{pulling-back} the constructions of Sections \ref{sssec:Eis-enh definition}-\ref{sssec:CT-enh definition} to $\LSG^x$, we obtain comonads $\Eis_P^{\enh, \spec, x} \circ \CT_P^{\enh, \spec, x}$. Indeed, observe that
the rule of propagation of singular support yields a functor
$$
(\wh {\fp_P^x})_*^\ICoh:
\ICoh_{\N^x_{P,M}}((\LSG^x)^\wedge_{\LSP^x}) \longto
\ICoh_{\N^x}(\LSG^x).
$$
Accordingly, we have a resulting functor $\DL_G^{\spec, x}: \ICoh_{\N^x}(\LSG^x) \to  \ICoh_{\N^x}(\LSG^x)$. Our current goal is to exhibit an object $\F^x_\DL \in \Dmod(\N^x)^\Rightarrow$ such that $\DL_G^{\spec, x} \simeq \F^x_\DL \ast -$.

\sssec{}

We fix a $G$-equivariant identification $\g^* \simeq \g$ once and for all, so that $A$ will be always regarded as a horizontal section of the adjoint bundle.

Consider the stack $\N_P \subseteq \LSP \times_{\LSG} \Sing(\LSG)$ consisting of pairs $(\sigma_P, A_P \in H^0_\dR(X, \p_{\sigma_P}))$ for which $A_P$ is nilpotent.
Note that the base-change of $\fp_P$ along $\Sing(\LSG) \to \LSG$ restricts to a map $\p_P^\Sing: \N_P \to \N$.


\begin{prop}
The comonad 
$$
\Eis_P^{\enh, \spec, x} \circ \CT_P^{\enh, \spec, x}:
\ICoh_{\N^x}(\LSG^x)
\longto
\ICoh_{\N^x}(\LSG^x)
$$ 
is given, up to shift of grading, by the object
$$
(\p_P^{\Sing,x})_{*,\dR} (\omega_{\N_P^x}).
$$
\end{prop}

\begin{proof}
The comonad in question is the result of a general construction that takes the map \eqref{eqn:pP pulled back to LSGx} and the sets $\N^x_{P,M}$, $\N^x$ as inputs.
Since \eqref{eqn:pP pulled back to LSGx} is a proper map of quasi-smooth schemes, with the target a global complete intersection, we will be able to use the theory developed in \cite[Section 2]{strong-gluing}.

Here is the general paradigm that we will apply. Let $f: X \to Y$ be a proper map of quasi-smooth schemes, with $Y$ a global complete intersection. Let 
$$
M \subseteq \Sing(X),
\hspace{.4cm}
N \subseteq \Sing(Y)
$$ 
be closed conical subsets with the property that $\ft_f \circ \fs_f^{-1}(M) \subseteq N$. This assumption implies that the closed embedding $\fs_f^{-1}(M) \times_{\Sing(Y)} N \hto \fs_f^{-1}(M) $ is an isomorphism\footnote{when dealing with subsets of the stack of singularities, it is always understood that we work at the reduced level.}.
We need to compute the comonad of the adjunction
$$ 
\begin{tikzpicture}[scale=1.5]
\node (a) at (0,1) {$\ICoh_M(Y^\wedge_X) $};
\node (b) at (3,1) {$\ICoh_N(Y)$.};
\path[->,font=\scriptsize,>=angle 90]
([yshift= 1.5pt]a.east) edge node[above] {$\wh f_*^\ICoh$} ([yshift= 1.5pt]b.west);
\path[->,font=\scriptsize,>=angle 90]
([yshift= -1.5pt]b.west) edge node[below] { } ([yshift= -1.5pt]a.east);
\end{tikzpicture}
$$
Explicitly, this is given by the composition
$$
\ICoh_N(Y)
\hto
\ICoh(Y) \xto{\wh f^!}
\ICoh(Y^\wedge_X) 
\tto
\ICoh_M(Y^\wedge_X) 
\xto{\wh f_*^\ICoh} 
\ICoh_N(Y).
$$
We use \virg{microlocality} (i.e., the following equivalences, proven partly in \cite[Section 3]{AG2} and partly in \cite[Sections 2.2-2.3]{strong-gluing}) to write
\begin{eqnarray}
\ICoh_N(Y) 
\nonumber
& \simeq & 
\ICoh(Y)
\usotimes{\Dmod(\Sing(Y))^\Rightarrow} 
\Dmod(N)^\Rightarrow,
\\
\label{eqn:micro-1}
\ICoh(Y^\wedge_X) 
& \simeq & 
\ICoh(Y) \usotimes{\Dmod(\Sing(Y))^\Rightarrow} 
\Dmod(X \times_Y \Sing(Y))^\Rightarrow,
\\
\label{eqn:micro-2}
\ICoh_M(Y^\wedge_X) 
& \simeq & 
\ICoh(Y) \usotimes{\Dmod(\Sing(Y))^\Rightarrow} 
\Dmod(\fs_f^{-1}(M))^\Rightarrow.
\end{eqnarray}
Under these equivalences, the adjunction in question is tensored up (up to a shift of grading) from 
$$ 
\begin{tikzpicture}[scale=1.5]
\node (a) at (0,1) {$\Dmod \bigt{ \fs_f^{-1}(M)}$};
\node (b) at (3,1) {$\Dmod(N)$,};
\path[->,font=\scriptsize,>=angle 90]
([yshift= 1.5pt]a.east) edge node[above] {$\pi_{*,\dR}$} ([yshift= 1.5pt]b.west);
\path[->,font=\scriptsize,>=angle 90]
([yshift= -1.5pt]b.west) edge node[below] {$\pi^{!,\dR}$} ([yshift= -1.5pt]a.east);
\end{tikzpicture}
$$
where $\pi: \fs_f^{-1}(M) \simeq \fs_f^{-1}(M) \ustimes{\Sing(Y)} N \to N$ is the obvious (proper) projection.

Coming back to our case, we immediately\footnote{One needs to unravel the effect of the identification $\g^* \simeq \g$: under such identification, the $P$-representation $\g^* \times_{\fp^*} \fm^*$ corresponds to $\g \times_{\g/\fu} (\fp/\fu) \simeq \p$, the adjoint $P$-representation.} see that $\fs_f^{-1}(M) \simeq\N^x_P$, while $\pi$ is $\p_P^{\Sing,x}$.
\end{proof}

\sssec{}

Recall now that the equivalences \eqref{eqn:micro-1} and \eqref{eqn:micro-2} are naturally functorial in $X$: this can be established by paraphrasing the discussion of \cite[Section 2.3.5]{strong-gluing}.
In our particular case, for $Q \subseteq P$, the natural transformation
$$
\Eis_Q^{\enh, \spec, x} \circ \CT_Q^{\enh, \spec, x}
\to
\Eis_P^{\enh, \spec, x} \circ \CT_P^{\enh, \spec, x}
$$
is induced (under the result of the above proposition) by the natural map
$$
(\p_Q^{\Sing,x})_{*,\dR} (\omega_{\N^x_Q})
\longto
(\p_P^{\Sing,x})_{*,\dR} (\omega_{\N^x_P}).
$$

\sssec{}

Consequently, $\DL_G^{\spec,x}$ corresponds (up to shift of grading) to the object
\begin{equation} \label{eqn:map forming sing steinberg}
\F^x_{\DL}:=
\coker
\bigt{
\uscolim{P \in \Par'} \,
(\p_P^{\Sing,x})_{*,\dR} (\omega_{\N^x_P})
\longto
\omega_{\N^x}
} \in \Dmod(\N^x).
\end{equation}
Clearly, this object is the pullback along $\LSG^x \to \LSG$ of an object $\F_{\DL} \in \Dmod(\N)$, obtained by removing the decorations \virg{$x$} in the above formula.

\begin{prop}
When restricted to the complement of the zero section $\N^\circ := \N - {\LSG}$, the above object $\F_{\DL}$ vanishes.
\end{prop}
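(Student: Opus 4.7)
The plan is to prove the vanishing pointwise on $\N^\circ$. Since $!$-restriction along the open embedding $\N^\circ \hookrightarrow \N$ commutes with colimits and with formation of cokernels, it suffices to show that for every $\kk$-point $(\sigma, A) \in \N^\circ$---i.e.\ every pair in which $A$ is a \emph{nonzero} nilpotent horizontal section of $\g_\sigma$---the canonical morphism
$$
\uscolim{P \in \Par'} \, i^!_{(\sigma, A)} \bigl((\p_P^{\Sing})_{*,\dR}\omega_{\N_P}\bigr) \longto i^!_{(\sigma, A)}\omega_\N \simeq \kk
$$
is an isomorphism.

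Since each $\p_P^{\Sing}$ is proper, proper base change identifies the $P$-th term of the colimit with the Borel--Moore chains of the fiber $F_P := (\p_P^{\Sing})^{-1}(\sigma, A)$. Concretely, $F_P$ parametrizes $P$-reductions $\sigma_P$ of the fixed local system $\sigma$ whose associated Lie-algebra bundle $\p_{\sigma_P}$ contains the given horizontal section $A$---a ``global Springer fiber'' attached to $(\sigma, A)$. The claim is thus reduced to showing that the $\Par'$-indexed diagram $P \squigto C^{\mathrm{BM}}_*(F_P)$ has colimit $\kk$ whenever $A$ is a nonzero nilpotent.

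Morally, this statement globalizes the classical fact that, for any nonzero nilpotent $a \in \g$, the nerve of the poset of proper standard parabolics $P$ with $a \in \p$ is contractible---a statement ultimately rooted in the geometry of partial Springer fibers and the existence of the Jacobson--Morozov parabolic attached to $a$. What must be proven here is the analogous contractibility one level higher: the poset-level fact must survive when one passes from single nilpotent elements to horizontal sections of $\g_\sigma$, and from parabolic subgroups in $G$ to moduli of $P$-reductions of $\sigma$. Establishing this compatibility is the main obstacle, and it is exactly the contractibility theorem recorded in \cite{AG2}; invoking it yields the vanishing of $\F_{\DL}|_{\N^\circ}$.
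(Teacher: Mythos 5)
Your proposal is correct and follows essentially the same route as the paper: reduce to checking $!$-fibers at geometric points, identify the fiber of $(\p_P^{\Sing})_{*,\dR}\omega_{\N_P}$ over $(\sigma,A)$ with (Borel--Moore chains of) the global Springer fiber $\Spr_P^{\sigma,A}$ of $P$-reductions of $\sigma$ compatible with $A$, and then invoke the homological contractibility of the glued Springer fiber $\Spr_\Glued^{\sigma,A} = \colim_{P \in \Par'} \Spr_P^{\sigma,A}$ for $A \neq 0$ as established in \cite{AG2}. The only difference is that your write-up supplies somewhat more motivation (the Jacobson--Morozov heuristic) than the paper's terse proof.
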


\begin{proof}
Let $(\sigma,A)$ be a geometric point of $\N$. According to the notation of \cite[Section 7.1.4]{AG2}, we have $\N_P \times_\N {(\sigma,A)} \simeq \Spr_P^{\sigma,A}$, the scheme of $P$-reductions of $\sigma$ with the property that $A \in H^0(X_\dR, \p_\sigma)$.
To prove the claim, it suffices to show that the $!$-fiber of $\F_{\DL}$ at any $(\sigma, A) \in \N^\circ$ is zero. By base change, this is equivalent to checking that 
$$
\Spr_\Glued^{\sigma, A} 
:= 
\uscolim{P \in \Par'} \; \Spr_P^{\sigma,A}
$$
is homologically contractible for any $A \neq 0$. This is exactly the statement of \cite[Theorem 7.2.5]{AG2}.
\end{proof}

\sssec{}
 It follows that $\DL_G^\spec$ annihilates the category of singularities
$$
\ICoh_\N(\LSG)^\circ := \ICoh_\N(\LSG)/\QCoh(\LSG).
$$
Thus, $\DL_G^\spec$ can be viewed as an endofunctor of $\QCoh(\LSG)$. Now, any endo-functor on $\QCoh(\LSG)$ that commutes with the $\H(\LSG)$-action must be given by a D-module of $\LSG$: this is the simplest application of \cite[Theorem 1.9.2]{shvcatH}. Such D-module is readily available: it is given by the formula
$$
\i^! 
\Bigt{
\coker 
\Bigt{
\uscolim{P \in \Par'} \,
(\p_P^{\Sing})_{*,\dR} (\omega_{\N_P})
\longto
\omega_\N
}
}
\in \Dmod(\LSG),
$$
where $\i: \LSG \hto \N$ is the inclusion of the zero section.
By base-change, the latter simplifies to the \emph{Steinberg object}:
$$
\St_G:=
\coker
\Bigt{
\uscolim{P \in \Par'} \,
(\p_P)_{*,\dR} (\omega_{\LSP}) 
\longto
\omega_{\LSG}
}
\in \Dmod(\LSG).
$$

\sec{Proof of Theorem \ref{mainthm: StG vs StM}} \label{sec:funct eqn St}

In this section, we use some Weyl combinatorics to prove the main property of $\St_G$, that is, Theorem \ref{mainthm: StG vs StM}.
Let us recall the statement: for $P_0$ a parabolic subgroup of $G$ with Levi $M_0$, we need to construct a canonical isomorphism 
\begin{equation} \label{eqn:St with P0 for proof}
\p_{P_0}^!(\St_G) \simeq \i_{P_0,*}(\St_{M_0})[\rk(G)-\rk(M_0)]
\end{equation}
in $\Dmod(\LS_{P_0})$, where $\i_{P_0}: \LS_{M_0} \to \LS_{P_0}$ is the natural map and $\rk$ denotes the semisimple rank of a reductive group.
We will later deduce Theorem \ref{mainthmbis: fibers of St} which describes the geometric fibers of $\St_G$.

\ssec{Preliminaries} \label{ssec:prelim}

We will use some standard Weyl combinatorics: we refer to \cite[Sections 8.1-8.2]{AG2} for a handy review and for the notation we use. In particular, $\sR$ (respectively, $\sR^+$) denotes the set of roots of $G$ (respectively, positive roots with respect to the chosen $B$). For a parabolic $P \subseteq G$, let $J_P$ be the subset of the Dynkin diagram associated to $P$ (e.g., $J_B = \emptyset$).

\sssec{}

In the proof that follows, we assume that $P_0$ is a proper standard parabolic. If $P_0$ is not standard, the strategy is the same, up to multiplying $w_0'$ by an appropriate element of $W$. Below, we abuse notation and write $J_0$ in place of $J_{P_0}$.

\sssec{}

Let $W' := \{w \in W \, : \,  w^{-1}(J_0) \subseteq \sR^+\}$. The quotient stack $P_0 \backslash G/P$ has strata 
indexed by $W'_P := \{ w \in W'  \, : \, w(J_P) \subseteq \sR^+ \}$.
For $w \in W'$ (but not necessarily in $W'_P$), the notations $(P_0 \backslash G/P)^{\leq w}$ and $(P_0 \backslash G/P)^{<w}$ have their evident meanings. 
We also set
$$
(P_0 \backslash G/P)^{ w} := 
(P_0 \backslash G/P)^{\leq w} -
(P_0 \backslash G/P)^{< w}
\simeq
\begin{cases}
P_0 \backslash P_0 w P/P
\simeq
\pt/(P_0 \cap w P w^{-1}) & \mbox{if $w \in W'_P$ } \\
\emptyset & \mbox{if $w \in W' - W'_P$}.
\end{cases}
$$

\sssec{} \label{sssec:about w0'}

Recall that $W'$ has a unique longest element $w_0'$, characterized by the fact that $w_0'(\sR^+) \cap \sR^+ = \sR_{J_0}^+$. Alternatively: $w_0'$ is the product $w_{0,P_0} \cdot w_0$, where $w_{0,P_0}$ and $w_0$ are the longest elements of $W_{M_0}$ and $W$ respectively. From this expression, it is clear that $(w_0')^{-1}$ sends the simple roots of $S_{P_0}$ to simple roots; we define $ K_{0} := (w_0')^{-1}(J_{P_0}) \subseteq S$.

\medskip

Consequently, 
$$
(P_0 \backslash G/P)^{ w_0'}
\simeq
\begin{cases}
\pt/(M_0 \cap P) & \mbox{if $J_P  \subseteq K_0$ } \\
\emptyset & \mbox{if $J_P \nsubseteq  K_0$}.
\end{cases}
$$

\sssec{}

Consider the mapping stack $\Y_P :=\MMaps(X_\dR, P_0 \backslash G/P)$ and its closed substacks 
$$
\Y_{P, \leq w} :=\MMaps(X_\dR, (P_0 \backslash G/P)^{\leq w}).
$$
Tautologically, we have:
$$
\Y_P
\simeq
\uscolim{w \in W'} \;
\Y_{P, \leq w}.
$$
Define also  $\Y_{P, < w}$ and $\Y_{P,w}$ is a similar way.
For instance, we have
$$
\Y_{P,w} 
\simeq 
\begin{cases}
\LS_{P^w \cap P_0} 
&
\mbox{ if $w(J_P) \subseteq \sR^+$} \\
\emptyset & \mbox{otherwise};
\end{cases}
$$
Denote by 
$$
\begin{array}{ll}
\pi_{P}: \Y_{P} \longto \LS_{P_0}
&
\pi_{P, \leq w}: \Y_{P, \leq w} \longto \LS_{P_0} \\
\pi_{P, < w}: \Y_{P, <w} \longto \LS_{P_0}
&
\pi_{P,  w}: \Y_{P, w} \longto \LS_{P_0}
\end{array}
$$
the obvious maps. 

\begin{example}
We have seen above that $\Y_{P,w}  \simeq \LS_{P^w \cap P_0}$ whenever $w(J_P) \subseteq \sR^+$. In this case, $\pi_{P,w}$ is the induction map $\i_{P^w \cap P_0 \to P_0}$.
\end{example}

\ssec{The proof}

We are now ready to construct the natural isomorphism appearing in \eqref{eqn:St with P0 for proof}.

\sssec{}

Set $\S := \p_{P_0}^!(\St_G) \in\Dmod(\LS_{P_0})$.  Obviously, 
$$
\S
\simeq
\coker
\Bigt{
\uscolim{P \in \Par'} \; (\pi_{P})_* \omega_{\Y_{P}}
\longto
\omega_{\LS_{P_0}}
}.
$$
Hence, $\S \simeq \colim_{w \in W'} \S^{\leq w}$, where
$$
\S^{\leq w}
:=
\coker
\Bigt{
\uscolim{P \in \Par'} \; (\pi_{P, \leq w})_* \omega_{\Y_{P, \leq w}}
\longto
\omega_{\LS_{P_0}}
}.
$$

\begin{lem}
The object $\S^{\leq 1} \in \Dmod(\LS_{P_0})$ is isomorphic to the zero object.
\end{lem}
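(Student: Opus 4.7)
The plan is to show that the map
\[
\colim_{P \in \Par'} (\pi_{P, \leq 1})_* \omega_{\Y_{P, \leq 1}} \longrightarrow \omega_{\LS_{P_0}}
\]
defining $\S^{\leq 1}$ is an equivalence. Since $w = 1$ is the minimal element in Bruhat order, by the preceding discussion $\Y_{P,\leq 1} = \Y_{P,1} \simeq \LS_{P_0 \cap P}$ and $\pi_{P,1} = \i_{P_0 \cap P \to P_0}$. The structural observation that drives the argument is that, for standard parabolics, $P_0 \cap P = P_K$ with $K := J_P \cap J_{P_0} \subseteq J_{P_0}$; the intersection depends on $P$ only through $K$. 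Consequently, the functor
$$
F : P \squigto (\i_{P_0 \cap P \to P_0})_* \omega_{\LS_{P_0 \cap P}}
$$
factors as $F = F' \circ \phi$, where
$$
\phi : \Par' \longrightarrow 2^{J_{P_0}}, \qquad P \longmapsto J_P \cap J_{P_0}
$$
is the monotone projection and $F'(K) := (\i_{P_K \to P_0})_* \omega_{\LS_{P_K}}$; inside a $\phi$-fiber, the transition maps are literal identities, because the corresponding parabolics $P_0 \cap P$ coincide.

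Next, I would verify that $\phi$ is cofinal, so that $\colim_{\Par'} F \simeq \colim_{2^{J_{P_0}}} F'$. By Joyal's cofinality criterion, this amounts to checking that, for each $K_0 \in 2^{J_{P_0}}$, the slice
\[
\phi_{K_0/} = \{ P \in \Par' : K_0 \subseteq J_P \} \cong 2^{S \setminus K_0} \setminus \{S \setminus K_0\}
\]
is weakly contractible. Since $K_0 \subseteq J_{P_0} \subsetneq S$, the set $S \setminus K_0$ is nonempty, so this slice is a non-trivial boolean lattice with its top element removed, and hence has $\emptyset$ (corresponding to the parabolic of type $K_0$) as a minimum. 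Such a poset is contractible.

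The final step exploits that $J_{P_0}$ is terminal in $2^{J_{P_0}}$, whence
\[
\colim_{\Par'} F \simeq \colim_{2^{J_{P_0}}} F' \simeq F'(J_{P_0}) = (\i_{P_0 \to P_0})_* \omega_{\LS_{P_0}} \simeq \omega_{\LS_{P_0}}.
\]
The resulting equivalence is compatible with the map appearing in the definition of $\S^{\leq 1}$ by naturality: on the $P = P_0$ term of the colimit (which lies in $\phi^{-1}(J_{P_0})$), the structure map is the identity $\omega_{\LS_{P_0}} \to \omega_{\LS_{P_0}}$, so both maps agree there and hence everywhere. This yields $\S^{\leq 1} \simeq 0$.

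The main obstacle I anticipate is the rigorous verification that the diagram $F$ genuinely factors through $\phi$ in the $\infty$-categorical sense, with transition maps inherited from the closed stratification of the $\Y_P$ and from the colimit defining $\St_G$ after applying $\p_{P_0}^!$. This should reduce to an inspection at the level of the prestacks $\pt/(P_0 \cap P)$, where the identification $P_0 \cap P = P_K$ and the induction maps $\i_{P_0 \cap P \to P_0}$ come from literal inclusions of algebraic subgroups; the coherence is thus essentially tautological, but the bookkeeping of the full derived data still warrants care.
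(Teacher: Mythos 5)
Your argument is correct and takes essentially the same route as the paper: identify $\Y_{P,\leq 1}\simeq\LS_{P_0\cap P}$, observe that the diagram factors through $P\mapsto J_P\cap J_{P_0}$, and evaluate at the terminal object $P_0$. Where the paper simply asserts the passage from $\uscolim_{P\in\Par'}$ to $\uscolim_{P\subseteq P_0}$, you spell out the cofinality of the projection $\phi$ via Joyal's criterion, which is a welcome clarification of a step the paper leaves implicit.
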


\begin{proof}
Since $(P_0 \backslash G/P)^{\leq 1} = \pt/(P_0 \cap P)$ for any $P \in \Par'$, we obtain
$$
\uscolim{P \in \Par'} \; (\pi_{P, \leq 1})_* \omega_{\Y_{P, \leq 1}}
\simeq
\uscolim{P \in \Par'} \; (\i_{P_0 \cap P \to P_0})_* \omega_{\LS_{P_0 \cap P}} 
\simeq
\uscolim{P \subseteq P_0} \; (\i_{P_0 \cap P \to P_0})_* \omega_{\LS_{P_0 \cap P}}
\simeq
\omega_{\LS_{P_0}}.
$$
Then the assertion is clear.
\end{proof}

\begin{lem}
For any $w \in W' - \{1,w_0'\}$, the natural map $\S^{<w} \to \S^{\leq w}$ is an isomorphism. 
\end{lem}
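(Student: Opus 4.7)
My plan is to identify the fiber of $\S^{<w} \to \S^{\leq w}$ with the ``$w$-stratum contribution'' $B^w := \uscolim{P \in \Par'} (\pi_{P, w})_* \omega_{\Y_{P, w}}$, and then show $B^w \simeq 0$ for $w \in W' - \{1, w_0'\}$ via a cofinality argument controlled by the Weyl combinatorics of $w_0'$.

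First, for each $P \in \Par'$, the localization triangle for the closed/open pair $\Y_{P, <w} \hookrightarrow \Y_{P, \leq w} \hookleftarrow \Y_{P, w}$ reads $i_* \omega_{\Y_{P, <w}} \to \omega_{\Y_{P, \leq w}} \to j_* \omega_{\Y_{P, w}}$ in $\Dmod(\Y_{P, \leq w})$. Pushing forward along $\pi_{P, \leq w}$ to $\LS_{P_0}$ and taking $\colim_{P \in \Par'}$ (exact on stable $\infty$-categories) produces a distinguished triangle $A^{<w} \to A^{\leq w} \to B^w$. Coupled with the compatible cokernel maps $A^{<w}, A^{\leq w} \to \omega_{\LS_{P_0}}$, an octahedral argument identifies the fiber of $\S^{<w} \to \S^{\leq w}$ with $B^w$, reducing the lemma to proving $B^w \simeq 0$.

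Next, by \secref{sssec:about w0'}, $(P_0 \backslash G / P)^w \simeq \pt / (P_0 \cap w P w^{-1})$ when $J_P \subseteq K_w := w^{-1}(R^+) \cap S$ and is empty otherwise; hence $\Y_{P, w} \simeq \LS_{Q_{P, w}}$ with $Q_{P, w} := P_0 \cap w P w^{-1}$ and $\pi_{P, w} \simeq \i_{Q_{P, w} \to P_0}$, so $B^w \simeq \uscolim{P \in \Par', J_P \subseteq K_w} (\i_{Q_{P, w} \to P_0})_* \omega_{\LS_{Q_{P, w}}}$. For the vanishing, I adapt the cofinality argument of the previous lemma, where the functor $P \mapsto (\i_{P \cap P_0 \to P_0})_* \omega_{\LS_{P \cap P_0}}$ factored through the right-adjoint retraction $P \mapsto P \cap P_0$ of the fully-faithful inclusion $\{P' \subseteq P_0\} \hookrightarrow \Par'$ (a right adjoint, hence left-cofinal), reducing the colimit to its value at the terminal $P_0$ of the target and producing $\omega_{\LS_{P_0}}$. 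For general $w$, my plan is to construct an analogous retraction $r_w$ of the indexing poset $\{P : J_P \subseteq K_w\}$ onto a subposet of ``$w$-twisted parabolic data in $P_0$'' and then evaluate the retracted terminal value. The crucial Weyl-theoretic input is the characterization $w_0'(R^+) \cap R^+ = R^+_{J_0}$: only for $w = w_0'$ does the retracted terminal realize a full reduction to the Levi $M_0$ (producing the eventual $\St_{M_0}$-contribution to $\S$ in the last step of the computation); for any other intermediate $w$, the subgroup $Q_{P,w}$ at the terminal fails the compatibility required for the induction functor to survive, and a contraction or cancellation argument on the retracted poset then forces $B^w = 0$.

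The main obstacle will be constructing the retraction $r_w$ precisely and converting ``$w \neq 1, w_0'$'' into the required contractibility/cancellation on the retracted poset, which will require a delicate analysis of Bruhat-cell stabilizers and parabolic intersections in $P_0$, in the spirit of the parabolic-acyclicity analyses of \cite{AG2}.
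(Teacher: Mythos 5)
Your reduction of the lemma to the vanishing $B^w \simeq 0$ (where $B^w := \colim_{P \in \Par'}(\pi_{P,w})_*\omega_{\Y_{P,w}}$) is correct and is a clean reformulation of what the paper proves. You also correctly identify the geometry of the $w$-stratum as $\Y_{P,w} \simeq \LS_{P_0 \cap wPw^{-1}}$. However, the heart of the lemma --- the actual vanishing of $B^w$ for $w \neq 1, w_0'$ --- is exactly where your argument stops. You describe a ``plan'' to construct a cofinal retraction $r_w$ of the poset $\{P : J_P \subseteq K_w\}$, but you explicitly flag that constructing $r_w$ and extracting the cancellation are ``the main obstacle,'' so this is an unproven assertion, not a proof. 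Moreover, the analogy with the $w=1$ case is misleading: there the colimit over $\Par'$ \emph{equals} $\omega_{\LS_{P_0}}$ (the $P_0$-term survives), it does not vanish. For intermediate $w$ the vanishing is a genuinely delicate combinatorial/contractibility statement and is not a simple terminal-object cofinality computation.

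The paper's proof takes a different route that sidesteps the object-level poset argument: since $!$-fibers on geometric points are jointly conservative, it suffices to check $B^w$ is $0$ fiberwise at every $\sigma_{P_0} \in \LS_{P_0}$; the fibers $\Y_{P,<w} \times_{\LS_{P_0}} \sigma_{P_0}$ and $\Y_{P,\leq w} \times_{\LS_{P_0}} \sigma_{P_0}$ are precisely the glued Springer prestacks $\Spr_{\Glued}^{\sigma,<w}$ and $\Spr_{\Glued}^{\sigma,\leq w}$ of \cite{AG2}, and the required homological contractibility of $\Spr_{\Glued}^{\sigma,\leq w}/\Spr_{\Glued}^{\sigma,<w}$ is exactly \cite[Sections 8.5.1--8.5.5]{AG2} in the special case $A=0$. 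So the hard combinatorial content you left open is not recreated from scratch; it is imported from \cite{AG2}. If you want to complete your version of the argument, you would in effect need to reproduce that contractibility analysis, which is considerably more intricate than a retraction of posets.
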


\begin{proof}
It suffices to show that the map
$$
\uscolim{P \in \Par'} \; (\pi_{P, <w})_* \omega_{\Y_{P, <w}}
\longto
\uscolim{P \in \Par'} \; (\pi_{P, \leq w})_* \omega_{\Y_{P, \leq w}}
$$
is an isomorphism in $\Dmod(\LS_{P_0})$. As argued in \cite[Lemma 6.1.7]{AG2}, this can be checked at the level of geometric points, that is, after pulling back to a $P_0$-local system $\sigma_{P_0} \to \LS_{P_0}$. Observe that 
$$
\Y_{P, <w} \ustimes{\LS_{P_0}} \sigma_{P_0}
\simeq
\Spr_P^{\sigma, < w}
\hspace{.6cm}
\Y_{P, \leq w} \ustimes{\LS_{P_0}} \sigma_{P_0}
\simeq
\Spr_P^{\sigma, \leq w}
$$
in the notation of \cite[Section 7.1.4]{AG2} and \cite{strong-gluing}. Hence, we just need to show that the map
$$
H_* (\Spr_{\Glued}^{\sigma, < w})
\longto
H_* (\Spr_{\Glued}^{\sigma, \leq w})
$$
is an isomorphism of complexes of vector spaces. Equivalently, we need to show that the prestack
$$
\Spr_{\Glued}^{\sigma, \leq w}/ \Spr_{\Glued}^{\sigma, < w}
$$
is homologically contractible.
The proof is the special case of \cite[Section 8.5.1-8.5.5]{AG2} for $A=0$. (Compare with \cite[Remark 8.3.2]{AG2}.)
\end{proof}

\sssec{}

The two lemmas above imply that $\S^{< w_0'} \simeq 0$: indeed, $\S^{< w_0'} \simeq \colim_{u < w_0'} \S^{\leq u}$. 
Hence, 
$$
\S 
\simeq 
\S^{\leq w_0'} 
\simeq 
\coker (\S^{< w_0'} \to \S^{\leq w_0'}).
$$
On the other hand, we tautologically have
$$
\coker (\S^{< w_0'} \to \S^{\leq w_0'})
\simeq
\uscolim{P \in\Par'}
\coker
\Bigt{
(\pi_{P, < w_0'})_* \omega_{\Y_{P,< w_0'}}
\longto
(\pi_{P, \leq w_0'})_* \omega_{\Y_{P,\leq w_0'}}
}[1].
$$
Since $\Y_{P,w_0'} \simeq  \Y_{P,\leq w_0'} - \Y_{P,< w_0'}$, the open-closed fiber sequence, combined with the discussion of Section \ref{ssec:prelim}, yields
$$
\F_{P} :=\coker
\Bigt{
(\pi_{P, < w_0'})_* \omega_{\Y_{P,< w_0'}}
\longto
(\pi_{P, \leq w_0'})_* \omega_{\Y_{P,\leq w_0'}}
}
\simeq 
\begin{cases}
0 & \mbox{if $P \nsubseteq P_{K_0}$} \\
(\i_{P \cap M_0 \to P_0})_* \omega_{\LS_{P \cap M_0}}
& 
\mbox{if $P \subseteq P_{K_0}$}.
\end{cases}
$$

\sssec{}

To conclude our proof, it remains to simplify the RHS of 
$$
\S \simeq \bigt{ \uscolim{P \in \Par'} \; \F_P }[1]
$$
by showing that
$$
\uscolim{P \in \Par'} \; \F_P \simeq (\i_{M_0 \to P_0})_*(\St_{M_0})[\rk(G) - \rk(M_0)-1].
$$
The proof of this latter claim amounts to applying the following general lemma to the functor  
$$
\F_\bullet: \Par' \to \Dmod(\LS_{P_0}).
$$

\begin{lem} \label{lem:colim of poset of parts }

For a finite set $A$, denote by $\P(A)$ the poset of parts of $A$ and by $\P'(A) := \P(A) - \{A\}$ the poset of proper parts of $A$. 

Let $A \subsetneq B$ be two finite sets and $\phi: \P'(B) \to \C$ a functor to a DG category $\C$.
If $\phi(J) = 0$ for any $J \nsubseteq A$, then 
$$
\colim \phi
\simeq
\coker
\Bigt{
\colim \restr \phi {\P'(A)} \to \phi(A)
}
\big[ |B-A|-1 \big ]. 
$$
\end{lem}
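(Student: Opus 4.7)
The plan is to induct on $n := \#(B-A) \geq 1$. Pick any $b \in B - A$ and set $B' := B \setminus \{b\}$, so that $A \subseteq B'$ and $\#(B'-A) = n-1$.

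The key combinatorial input I would first establish is a pushout presentation
$$
\P'(B) \;\simeq\; \P(B') \sqcup_{\P'(B')} \bigl(\P'(B') \times \Delta^1\bigr)
$$
in the $\infty$-category of posets, with $\P'(B') \hookrightarrow \P(B')$ the tautological inclusion and $\P'(B') \hookrightarrow \P'(B') \times \Delta^1$ the bottom slice $K \mapsto (K,0)$; under this equivalence the class $(K,1)$ corresponds to $K \cup \{b\} \in \P'(B)$. The nontrivial check is on morphisms: any arrow in $\P'(B)$ from $S$ (with $b \notin S$) to some $K \cup \{b\}$ satisfies $S \subseteq K \subsetneq B'$ and factors uniquely as $S \hookrightarrow K \hookrightarrow K \cup \{b\}$ through $\P'(B')$, matching what the pushout produces.

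Next, I would apply $\colim\phi$ to this pushout; since $\colim$ commutes with pushouts and $\colim_{\P(B')} \phi|_{\P(B')} = \phi(B')$ by terminality of $B'$, the outcome is
$$
\colim \phi \;\simeq\; \phi(B') \underset{\colim_{\P'(B')} \phi}{\sqcup} \colim_{\P'(B') \times \Delta^1} \phi.
$$
The third term vanishes: Fubini over $\Delta^1$ combined with terminality of $1 \in \Delta^1$ turns it into $\colim_{\P'(B')} \phi(K \cup \{b\}) = 0$, because $b \notin A$ forces $K \cup \{b\} \not\subseteq A$ and hence $\phi(K \cup \{b\}) = 0$ by hypothesis. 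So the formula collapses to $\colim \phi \simeq \coker\bigl(\colim_{\P'(B')} \phi \to \phi(B')\bigr)$.

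Finally, to close the induction, I would split on $n$. If $n = 1$ then $B' = A$ and the formula is precisely $\coker(\colim_{\P'(A)} \phi \to \phi(A))$, the claim at shift $[0]$. If $n \geq 2$ then $B' \supsetneq A$ yields $\phi(B') = 0$, while the induction hypothesis applied to the pair $A \subsetneq B'$ identifies the source as $\coker(\colim_{\P'(A)} \phi \to \phi(A))[n-2]$; taking $\coker$ into $0$ supplies the remaining $[1]$, assembling to the desired total shift $[n-1]$. The genuine obstacle is establishing the pushout presentation of $\P'(B)$; everything else is formal.
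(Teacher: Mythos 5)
Your proof is correct and, after unwinding, uses essentially the same decomposition as the paper: your pushout $\P(B') \sqcup_{\P'(B')} (\P'(B') \times \Delta^1)$ agrees with the paper's $\P(A) \sqcup_{\P'(A)} (\P'(B)-\{A\})$ once one identifies $\P'(B)-\{B'\}$ with $\P'(B') \times \Delta^1$ via $(K,0)\mapsto K$, $(K,1)\mapsto K\cup\{b\}$. The remaining differences are cosmetic: you handle general $\#(B-A)$ by a uniform induction and obtain the vanishing of the cylinder colimit via Fubini and terminality of $1 \in \Delta^1$, while the paper first reduces to $\#(B-A)=1$ and proves the vanishing via cofinality of $\P'(B)_{x/} \hookrightarrow \P'(B)-\{A\}$, both arguments amounting to the observation that $\phi$ vanishes on the cofinal subposet of subsets containing the extra element.
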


\begin{proof}
Clearly, treating the case of $|A| = |B|-1$ is enough. Let $x \in B-A$ be the only extra element. 
The decomposition $\P'(B) = \P(A) \sqcup_{\P'(A)} (\P'(B)-\{A\})$
shows that the square
$$ 
\begin{tikzpicture}[scale=1.5]
\node (01) at (0,1) {$\colim \restr \phi {\P'(A)}$};
\node (11) at (2,1) {$\colim \restr \phi {\P(A)}$};
\node (10) at (2,0) {$\colim \phi$};
\node (00) at (0,0) {$\colim \restr \phi {(\P'(B)-\{A\})}$};
\path[->,font=\scriptsize,>=angle 90]
(00.east) edge node[above] {$ $} (10.west);
\path[->,font=\scriptsize,>=angle 90]
(01.south) edge node[left] {$ $}  (00.north);
\path[->,font=\scriptsize,>=angle 90]
(01.east) edge node[above] {$ $} (11.west);
\path[->,font=\scriptsize,>=angle 90]
(11.south) edge node[right] {$ $} node[left] {} (10.north);
\end{tikzpicture}
$$
is a pushout. Since $\P(A)$ has a final object ($A$ itself), it remains to show that the colimit of the restriction of $\phi$ to $(\P'(B)-\{A\})$ is zero.
Since the inclusion
$$
\P'(B)_{x/}
\longto
\P'(B)-A
$$
is cofinal, we have
$$
\colim \restr \phi {(\P'(B)-\{A\})}
\simeq
\colim \restr \phi {\P'(B)_{x/}}
$$
and the RHS is zero (indeed, $\phi$ is identically zero on $\P'(B)_{x/}$).
\end{proof}

\ssec{Proof of Theorem \ref{mainthmbis: fibers of St}}

Let us deduce Theorem \ref{mainthmbis: fibers of St} from  Theorem \ref{mainthm: StG vs StM}.
We use the following corollary as the main ingredient.

\begin{cor} \label{cor:values of St on sigmaM}
If $\sigma \simeq \sigma_M \times^M G$, then 
$$
\restr {\St_G} \sigma
\simeq
\restr {\St_M} {\sigma_M}[2 \cdot h^0(X_\dR, \fu_{\sigma_M}) + \rk(G)-\rk(M)],
$$
where $U$ in the unipotent radical of a parabolic with Levi $M$ and $\fu = Lie(U)$.
\end{cor}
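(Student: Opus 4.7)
The plan is to derive the corollary from Theorem~\ref{mainthm: StG vs StM} via a base-change computation on the derived fiber of $\i_P$. Fix a parabolic $P \subseteq G$ with Levi $M$ and let $\sigma_P := \i_P(\sigma_M) \in \LS_P$, so that $\sigma = \p_P \circ \sigma_P$ and $\sigma_P = \i_P \circ \sigma_M$. Applying $\sigma_P^{!,\dR}$ to the isomorphism of Theorem~\ref{mainthm: StG vs StM} and using $\sigma^! = \sigma_P^! \circ \p_P^!$ yields
$$
\restr{\St_G}{\sigma} \simeq \sigma_P^! \, (\i_P)_{*,\dR} \, \St_M \, [\rk(G) - \rk(M)].
$$

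To evaluate $\sigma_P^! (\i_P)_{*,\dR} \St_M$, I would apply base change along the Cartesian square
$$
\begin{CD}
\Omega @>\tilde\sigma_M>> \LS_M \\
@V\pi VV @VV\i_P V \\
\pt @>\sigma_P>> \LS_P
\end{CD}
$$
where $\Omega := \pt \times_{\LS_P} \LS_M$ is the derived fiber of $\i_P$ over $\sigma_P$. The key simplification comes from the retraction $\q_P: \LS_P \to \LS_M$: applying $\q_P$ to the Cartesian identity $\i_P \circ \tilde\sigma_M = \sigma_P \circ \pi$ and using both $\q_P \circ \i_P = \id_{\LS_M}$ and $\q_P \circ \sigma_P = \sigma_M$ forces $\tilde\sigma_M = \sigma_M \circ \pi$; in particular $\tilde\sigma_M$ factors through the point. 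Therefore $\tilde\sigma_M^! \St_M \simeq \pi^!(\sigma_M^! \St_M) \simeq (\sigma_M^! \St_M) \otimes_\kk \omega_\Omega$, and the projection formula gives
$$
\sigma_P^! \, (\i_P)_{*,\dR} \, \St_M \simeq \restr{\St_M}{\sigma_M} \otimes_\kk \pi_{*,\dR} \, \omega_\Omega.
$$

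It remains to identify $\pi_{*,\dR} \, \omega_\Omega \simeq \kk[2 h^0(X_\dR, \fu_{\sigma_M})]$. The tangent complex of $\Omega$ at its canonical basepoint (coming from the retraction $\q_P$) is computed as the homotopy fiber of $T_{\LS_M, \sigma_M} \to T_{\LS_P, \sigma_P}$; using the Levi decomposition $\fp_{\sigma_P} \simeq \fm_{\sigma_M} \oplus \fu_{\sigma_M}$ this fiber is $R\Gamma(X_\dR, \fu_{\sigma_M})$, concentrated in cohomological degrees $[0,1]$. Group-theoretically, $\Omega$ is the derived homogeneous stack $\Aut(\sigma_P)/\i_P \Aut(\sigma_M)$: its classical underlying is the unipotent variety $\exp H^0(X_\dR, \fu_{\sigma_M}) \simeq \AA^{h^0(X_\dR, \fu_{\sigma_M})}$, whose Borel-Moore invariant is $\kk[2 h^0(X_\dR, \fu_{\sigma_M})]$, while $H^1(X_\dR, \fu_{\sigma_M})$ contributes only a derived thickening.

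The main obstacle is this last step: controlling the contribution of the derived structure of $\Omega$ to the pushforward $\pi_{*,\dR} \omega_\Omega$. A naive virtual-dimension count would produce a shift by $h^0 - h^1$ rather than the desired $2 h^0$, so an additional argument is needed. One way to carry this out is via a Koszul-duality calculation on the tangent complex $R\Gamma(X_\dR, \fu_{\sigma_M})$, comparing $\omega_\Omega$ with the dualizing complex of the classical underlying $\AA^{h^0}$ twisted by the virtual normal bundle of $\i_P$. Alternatively, one may invoke the contraction principle (full faithfulness of $(\i_P)_{*,\dR}$) in the form of an identification $\i_P^! (\i_P)_{*,\dR} \simeq \id$, so that $\sigma_P^! (\i_P)_{*,\dR} \St_M$ collapses up to the shift that absorbs the derived thickening; checking that these two viewpoints yield the same shift is the crux of the calculation.
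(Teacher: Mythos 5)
Your proposal takes essentially the same route as the paper: apply Theorem~\ref{mainthm: StG vs StM}, base change along the derived fiber $\Omega = \pt \times_{\LS_P} \LS_M$, use the retraction $\q_P$ to see that the top map of the square factors through $\sigma_M$, and identify $\Omega^{\cl}$ with the affine space $H^0(X_\dR, U_{\sigma_M})$. This is precisely the paper's computation (there $\Omega$ is called $Y = \ul\Sect(X_\dR, U_{\sigma_M})$).

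Where you go astray is in flagging the final step as a genuine obstacle. There is none: D-modules (crystals) on a derived scheme are canonically equivalent to D-modules on its classical truncation, so
$$
\pi_{*,\dR}\,\omega_\Omega \;\simeq\; \Gamma_\dR\bigl(\omega_{\Omega^{\cl}}\bigr) \;\simeq\; \kk[\,2\,h^0(X_\dR, \fu_{\sigma_M})\,],
$$
where the last identification is Borel--Moore homology of a smooth contractible affine scheme. No Koszul-duality correction, no virtual-dimension count, and no ``absorption of the derived thickening'' is required; the $h^1$ simply does not appear in the de~Rham setting. The ``naive virtual-dimension count'' giving $h^0 - h^1$ is what one would see for a coherent (ind-coherent/quasi-coherent) invariant, not for a D-module pushforward, so the worry is misplaced.

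Your second alternative is also based on a false identity: the contraction principle gives that $(\i_P)_{*,\dR}$ is fully faithful with \emph{left} adjoint $(\i_P)^{*,\dR}\simeq(\q_P)_{*,\dR}$, so it is $(\i_P)^{*,\dR}(\i_P)_{*,\dR}\simeq\id$, not $\i_P^!(\i_P)_{*,\dR}\simeq\id$. Indeed, if the latter held, your own base-change computation would force $\restr{\St_G}{\sigma}\simeq\restr{\St_M}{\sigma_M}[\rk G-\rk M]$ with no $2h^0$ shift, contradicting the answer you (correctly) obtained a paragraph earlier. So the two ``viewpoints'' you propose to reconcile in fact disagree, and the first one is the right one.
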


\begin{proof}
The map $\sigma: \pt \to \LSG$ factors as $\pt \xto{\sigma_P} \LSP \xto{\p_P} \LSG$, where $\sigma_P$ is the $P$-local system induced by $\sigma_M$. Then base change yields
$$
\restr {\St_G} \sigma
\simeq
\Gamma_{\dR}(\omega_Y) \otimes \restr{\St_M}{\sigma_M}[\rk(G) - \rk(M)],
$$
where 
$$
Y := 
\{\sigma_M \times^M P \} \times_{\LSP} \LSM 
\simeq
\ul\Sect (X_\dR, U_{\sigma_M})
$$
is the DG scheme of $M$-reductions of $\sigma_M \times^M P$. The classical scheme underlying $Y$ is isomorphic to the vector space $H^0(X_\dR, \fu_{\sigma_M})$.
In particular, $Y^{\cl}$ is homologically contractible and smooth of dimension $h^0(X_\dR, \fu_{\sigma_M})$. The assertion follows.
\end{proof}

\sssec{}

If $\sigma \simeq \sigma_M \times^M G \in \LSG(\kk)$ with $\sigma_M$ irreducible, then Corollary \ref{cor:values of St on sigmaM} shows that 
$$
\restr {\St_G} {\sigma}
\simeq \kk [2 \cdot h^0(X_\dR, \fu_{\sigma_M}) + \rk(G)-\rk(M)].
$$
Viceversa, suppose that $\sigma$ is not semisimple: this means that $\sigma \simeq \sigma_P \times^P G$ for some $P \in \Par'$ and some $P$-local system $\sigma_P$ which is not $M$-reducible. Then $\restr {\St_G} {\sigma} = 0$ by Theorem \ref{mainthm: StG vs StM}.

\sec{Proof of Theorem \ref{mainthm:St-fully faithful on CohN}} \label{sec:St fully faith on CohN}

Consider the functor
$$
\ul\St_G \otimes - :
\QCoh(\LSG)
\longto
\QCoh(\LSG).
$$
In this section, we will prove Theorem \ref{mainthm:St-fully faithful on CohN}, which states that such functor is fully faithful when restricted to $\Coh_\N(\LSG)$.
As a key tool, we apply the second adjunction (an instance of Braden's theorem) in the context of $\Dmod(\LSG)$.

\ssec{Braden's theorem and contraction principle for local systems} \label{ssec:Braden}

In this section, we render some of the material of \cite{braden}, \cite{CT}, \cite{DG-Braden} to the setting of $G$-local systems.

\sssec{}

Consider the Eisenstein series functor 
$$
\Eis_{P,*}^{\Dmod}: \Dmod(\LSM) \longto \Dmod(\LSG)
$$ 
defined by $(\p_{P})_{*,\dR}\circ (\q_P)^{!,\dR}$. 
Note that de Rham push-forward $(\p_{P})_{*,\dR}$ is continuous since the map $\p_P$ is schematic. Our goal is to prove that $\Eis_{P,*}^{\Dmod}$ admits a left adjoint. Such left adjoint is at least partially defined: it is given by the formula
$$
\CT_{P,!}^{\Dmod} :=  (\q_P)_! \circ \p_P^{*,\dR}
:
 \Dmod(\LSG) 
 \longto 
 \Dmod(\LSM).
$$
The question is then to show that this functor is defined on the entire category $\Dmod(\LSG)$.

\sssec{}

Consider the functor dual to $\Eis_{P,*}^{\Dmod}$: namely, the constant term functor
$$
\CT_{P,*}^{\Dmod} :=  (\q_P)_{*,\dR} \circ (\p_P)^{!,\dR}
:
 \Dmod(\LSG) 
 \longto 
 \Dmod(\LSM).
$$
The push-forward $(\q_P)_{*,\dR}$ is continuous because the map $\q_P$ is \emph{safe} in the terminology of \cite{finiteness}.

\begin{thm} [Second adjunction] \label{thm:CT for Dmod on LS}
There is a natural isomorphism of functors: $\CT_{P,!}^{\Dmod} \simeq \CT_{P^-,*}^{\Dmod}$. In particular, $\CT_{P,!}^{\Dmod}$ is well defined on the entire $\Dmod(\LS_G)$.
\end{thm}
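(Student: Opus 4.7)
I would follow the strategy of Drinfeld--Gaitsgory's proof of the second adjunction on $\Bun_G$ in \cite{CT}, transposing it from $\Bun_G$ to $\LS_G$. The main input is the contraction principle, i.e.\ Braden's theorem for D-modules on stacks, as developed in \cite{DG-Braden}.

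The plan is to equip $\LS_G$ with a $\Gm$-action whose attracting locus is $\LS_P$, whose repelling locus is $\LS_{P^-}$, and whose fixed locus is $\LS_M$, in such a way that the attractor/fixed-point correspondence on the $+$ side is exactly $\LS_M \xleftarrow{\q_P} \LS_P \xrightarrow{\p_P} \LS_G$, and on the $-$ side is $\LS_M \xleftarrow{\q_{P^-}} \LS_{P^-} \xrightarrow{\p_{P^-}} \LS_G$. To produce such an action I would pick a regular dominant cocharacter $\lambda \colon \Gm \to Z(M) \subseteq G$ and consider the induced action on $G$ by $\Ad \circ \lambda$. At the group level, the attractor is $P$, the repeller is $P^-$, and the fixed subgroup is $M$; applying the functor $\LS(-)$ transports this picture to a $\Gm$-action on $\LS_G$ with the desired contraction data.

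Given such an action, Braden's theorem (in the D-module version of \cite{DG-Braden}) supplies a canonical isomorphism of functors
$$
(\q_P)_! \circ (\p_P)^{*,\dR}
\;\xrightarrow{\;\sim\;}\;
(\q_{P^-})_{*,\dR} \circ (\p_{P^-})^{!,\dR}
$$
on the category of $\Gm$-monodromic D-modules on $\LS_G$. The monodromicity hypothesis is automatic in our setting: since $\Ad \circ \lambda$ acts on $G$ by inner automorphisms, the induced action on $\LS_G$ is canonically equivalent to the trivial one at the level of underlying $1$-morphisms, so every object of $\Dmod(\LS_G)$ is tautologically monodromic with trivial monodromy. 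The right-hand side is manifestly well-defined on all of $\Dmod(\LS_G)$, because $\q_{P^-}$ is safe in the sense of \cite{finiteness} and $\p_{P^-}^{!,\dR}$ is everywhere defined; hence Braden's isomorphism simultaneously yields the adjunction and extends $\CT_{P,!}^\Dmod$ from its a priori partial domain to the whole category.

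The main obstacle is precisely that the $\Gm$-action on $\LS_G$ arises from inner automorphisms of $G$: while the action on isomorphism classes is trivial, the $\Gm$-equivariant structure is non-trivial, and it is exactly this equivariant data that Braden's theorem consumes. Organizing this data rigorously, and checking that the attracting/repelling/fixed stacks for the induced action on $\LS_G$ really are $(\LS_P, \LS_{P^-}, \LS_M)$ with the correct correspondences, is the substantive content of the argument. This is the analogue, for $\LS_G$, of the careful Braden setup carried out in \cite{CT} for $\Bun_G$, and I would expect the same methods---if needed, after passing to a framed atlas $\LS_G^x \tto \LS_G$ to put the contraction on honest scheme-theoretic footing---to suffice.
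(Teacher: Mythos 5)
Your plan is exactly the paper's proof, which is the single sentence \emph{\virg{The proof is an instance of Braden's theorem. For instance, one might copy the one given in \cite{CT} for $G$-bundles.}} You have correctly identified the relevant cocharacter $\lambda\colon \Gm \to Z(M)$, the relevant adjoint action, and the reference \cite{DG-Braden}, so at the level the paper itself commits to, this is the intended argument.

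One conceptual point in your write-up deserves correction, since you flag it as \virg{the main obstacle} and then frame it a bit misleadingly. Because $\Ad\circ\lambda$ is inner, the induced $\Gm$-action on $\LSG$ is \emph{genuinely} trivializable (the trivializing $2$-morphism is multiplication by $\lambda(t)$); it is not merely \virg{trivial on isomorphism classes with a nontrivial equivariant structure.} The reason this does not collapse the Braden setup is that, for a (trivially acted-on) \emph{stack} $Z$, the attractor is not $Z$ itself but the $\Theta$-mapping stack $\Maps(\bbA^1/\Gm, Z)$, i.e.\ the stack of filtrations. For $Z=\LSG$ this is a disjoint union over (conjugacy classes of) cocharacters $\mu$ of $\LS_{P_\mu}$, and one must then restrict attention to the component singled out by $\lambda$, which is $\LSP$ with the maps $\p_P$ to $\LSG$ and $\q_P$ to the fixed locus $\LSM$; similarly on the repelling side. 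Monodromicity is indeed automatic, but for the reason that the action is trivializable, not because the \virg{equivariant data is nontrivial.} Organizing this correctly is exactly what \cite{CT} does for $\Bun_G$ (via their interpolation construction), and it is what the paper asks you to transport to $\LSG$; so your plan is sound, but the paragraph explaining \virg{the main obstacle} should be rewritten along these lines.
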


\begin{proof}
The proof is an instance of Braden's theorem. For instance, one might copy the one given in \cite{CT} for $G$-bundles.
\end{proof}

\sssec{}

Let us also record the following consequence of the contraction principle. For an appropriate cocharacter $\gamma: \Gm \to Z(M)$, the resulting $\Gm$-action on $\LSP$ is contracting (and trivializable), with fixed locus $\LSM$. This implies that $(\i_P)_{*,\dR}$ is fully faithful, with left adjoint isomorphic to $(\i_P)^{*,\dR} \simeq (\q_P)_{*,\dR}$.
Similarly, $(\q_P)^{!,\dR}$ is fully faithful, with left adjoint isomorphic to $(\i_P)^{!, \dR}$.
For the proofs, see \cite[Section 4.1.6]{CT}.

\ssec{D-module functoriality}

This is a quick reminder of the basic D-module functors on QCA algebraic stacks.
Recall the conventions of Section \ref{ssec: notation}.

\sssec{}

We denote by $(\ind_R, \oblv_R)$ the induction/forgetul functors for right D-modules. Recall that $\ind_R$ is dual (as well as left adjoint) to $\oblv_R$, with respect to the standard self dualities of $\Dmod(\Y)$ and $\ICoh(\Y)$.

The forgetful functor $\oblv_R$ intertwines the two types of $!$-pullbacks. By duality, $\ind_R$ intertwines $\ICoh$-pushforwards with renormalized de Rham push-forwards, see \cite{finiteness}. 

\sssec{}

We also have the induction/forgetful adjunction $(\ind_L, \oblv_L)$ for left D-modules. This adjunction is valid only for bounded (that is, eventually coconnective) stacks; we are not in danger, as we will only apply it to quasi-smooth stacks. The forgetful functor $\oblv_L$ intertwines $*$-pullbacks of quasi-coherent sheaves with $!$-pullbacks of D-modules.

\sssec{}

It remains to discuss the interaction between $\ind_L$ and $(\QCoh,*)$-pushforwards.
First off, we have $\oblv_R \simeq \Upsilon \circ \oblv_L$ and $\ind_L \simeq \ind_R \circ \Upsilon$.
Thus, the dual of $\ind_L$ is 
$$
(\ind_L)^\vee
\simeq
\Psi \circ \oblv_R.
$$
For $\Y$ a Gorenstein (for example, quasi-smooth) stack, we write $\L_\Y$ for the shifted line bundle $\Psi(\omega_\Y) \in \QCoh(\Y)$.
Abusing notation, for $H$ an affine algebraic group, we set $\L_H := \L_{\LS_H}$.

\begin{lem}
Let $f: \Y \to \Z$ be a map between Gorenstein QCA stacks.
Then
\begin{equation} \label{magic-formula}
\ind_L \, f_*
\simeq
f_{*,\ren} \, \ind_R \, \Xi_{\Y} ( f^*(\L_\Z) \otimes  - ),
\end{equation}
\begin{equation} \label{magic-formula-2}
\ind_L \, f_*
\simeq
f_{*,\ren} \, \ind_L \bigt{
 \L_\Y^{-1} \otimes f^*(\L_\Z) \otimes  - 
 }.
\end{equation}
\end{lem}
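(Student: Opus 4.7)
The plan is to reduce the two formulas to one another, and then to establish \eqref{magic-formula} by a short chain of isomorphisms combining standard compatibilities of $\ind_L$, $\ind_R$, $\Upsilon$, $\Xi$ with the various pushforwards.

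First, I would observe that \eqref{magic-formula} and \eqref{magic-formula-2} are equivalent under the Gorenstein assumption. Indeed, for $\Y$ Gorenstein, $\omega_\Y \in \ICoh(\Y)$ is invertible and $\omega_\Y \simeq \Xi_\Y(\L_\Y)$ by definition of $\L_\Y = \Psi(\omega_\Y)$. Combining this with the $\QCoh(\Y)$-linearity of $\Xi_\Y$ gives
\[
\Upsilon_\Y(\G) \simeq \Xi_\Y(\G \otimes \L_\Y), \qquad \G \in \QCoh(\Y).
\]
Plugging this into $\ind_L \simeq \ind_R \circ \Upsilon_\Y$ on the RHS of \eqref{magic-formula-2} and cancelling $\L_\Y$ against $\L_\Y^{-1}$ produces the RHS of \eqref{magic-formula}, so the two formulas are equivalent.

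To prove \eqref{magic-formula}, I would chain the following isomorphisms, applied to $\F \in \QCoh(\Y)$:
\begin{align*}
\ind_L \, f_*(\F)
&\simeq \ind_R \, \Upsilon_\Z \, f_*(\F) \\
&\simeq \ind_R \bigl( \Xi_\Z \bigl( f_*(\F) \otimes \L_\Z \bigr) \bigr) \\
&\simeq \ind_R \, \Xi_\Z \bigl( f_*(\F \otimes f^*(\L_\Z)) \bigr) \\
&\simeq \ind_R \, f_*^{\ICoh} \, \Xi_\Y \bigl( \F \otimes f^*(\L_\Z) \bigr) \\
&\simeq f_{*,\ren} \, \ind_R \, \Xi_\Y \bigl( f^*(\L_\Z) \otimes \F \bigr).
\end{align*}
The first isomorphism is $\ind_L \simeq \ind_R \circ \Upsilon$ applied on $\Z$. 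The second uses the Gorenstein identification $\Upsilon_\Z \simeq \Xi_\Z(-\otimes\L_\Z)$ and $\QCoh$-linearity of $\Xi_\Z$. The third is the $\QCoh$-projection formula. The fourth is the compatibility $\Xi_\Z \circ f_*^{\QCoh} \simeq f_*^{\ICoh} \circ \Xi_\Y$. The fifth is the intertwining $\ind_R \circ f_*^{\ICoh} \simeq f_{*,\ren} \circ \ind_R$ recalled immediately before the lemma.

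The main obstacle is the fourth step, namely the commutation of $\Xi$ with $f_*$. This should be a formal consequence of the defining compatibility $\Psi \circ f_*^{\ICoh} \simeq f_*^{\QCoh} \circ \Psi$ together with fully faithfulness $\Psi \circ \Xi \simeq \id$ on eventually coconnective (here quasi-smooth) stacks: applying $\Psi$ to both sides of $\Xi_\Z \circ f_*^{\QCoh} \simeq f_*^{\ICoh} \circ \Xi_\Y$ collapses to the tautology $f_*^{\QCoh} \simeq f_*^{\QCoh}$, and one promotes this to an equivalence in $\ICoh(\Z)$ using that both sides are $\QCoh(\Z)$-linear functors out of $\QCoh(\Y)$. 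If this turns out to be too delicate, an alternate route bypasses step four by working entirely on the $\ICoh$ side after step two: write $f_*(\F)\otimes\omega_\Z$ as $f_*^{\ICoh}\bigl(\Xi_\Y(\F)\otimes f^*(\L_\Z)\bigr)\otimes \L_\Z \otimes \L_\Z^{-1}$ via the $\ICoh$-projection formula and the Gorenstein identification $\omega_\Y \otimes \L_\Y^{-1} \simeq \Xi_\Y(\O_\Y)$.
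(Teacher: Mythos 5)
Your reduction of \eqref{magic-formula} to \eqref{magic-formula-2} is correct, and so are steps one, two, three and five of the chain. Step four, however, is not a valid isomorphism: the compatibility $\Xi_\Z \circ f_*^{\QCoh} \simeq f_*^{\ICoh} \circ \Xi_\Y$ fails in general, and in particular fails whenever $f$ is not of finite Tor-amplitude. A minimal counterexample is $f : \pt \hookrightarrow \Z := \Spec(\kk[\eps]/\eps^2)$ with input $\kk \in \QCoh(\pt)$. Then $f_*^{\ICoh}\Xi_{\pt}(\kk)$ is the residue field viewed inside $\Coh(\Z) \subset \ICoh(\Z)$, hence a \emph{compact} object of $\ICoh(\Z)$; whereas $\Xi_\Z(\kk)$ is a filtered colimit of objects of $\Perf(\Z)$ and cannot be compact (otherwise $\kk$ would be a retract of a perfect $\kk[\eps]$-complex). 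So the two sides are genuinely distinct. Your proposed justification does not repair this: $\Psi$ is \emph{not} conservative (the two objects above have the same image under $\Psi_\Z$), and $\QCoh(\Z)$-linearity only reduces the claim to agreement on $\O_\Y$, where the two sides differ as soon as $f_*\O_\Y$ fails to be perfect — which is the situation for the maps $\p_P : \LSP \to \LSG$ to which the lemma is later applied. The alternate route you sketch runs into the same obstruction.

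What you actually need is the composite of your steps four and five, namely $\ind_R\,\Xi_\Z\, f_*^{\QCoh} \simeq f_{*,\ren}\,\ind_R\,\Xi_\Y$; this is true, but using $\ind_R\Xi_\Y \simeq \ind_L(\L_\Y^{-1}\otimes -)$ and the projection formula one sees that this assertion unwinds to exactly \eqref{magic-formula-2}, so invoking it would be circular. The paper avoids the issue by passing to dual functors: under the Serre self-duality of $\ICoh$ and the naive self-duality of $\QCoh$, the dual of $\Xi_\Y$ is $\L_\Y^{-1}\otimes\Psi_\Y(-)$, and it is $\Psi$ — not $\Xi$ — that commutes with pushforwards. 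After dualizing both sides, the desired identity collapses to $\Psi_\Z\Upsilon_\Z \simeq \L_\Z\otimes-$ (the Gorenstein hypothesis) together with $\oblv_L\circ f^{!,\dR} \simeq f^*\circ\oblv_L$. You should either adopt this dualization, or else replace step four by a correct direct argument for the $\ind_R$-composite.
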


\begin{proof}
To check the first formula, let us pass to dual functors on both sides: we need to establish a functorial isomorphism
$$
f^* \circ \Psi_\Z \circ \oblv_R
\simeq
 f^*(\L_\Z) \otimes 
\Phi_\Y
\circ
\oblv_R
\circ 
f^{!,\dR},
$$
or equivalently (thanks to $\oblv_R = \Upsilon \oblv_L$), 
$$
f^* \circ \Psi_\Z \circ \Upsilon_\Z \circ \oblv_L
\simeq
 f^*(\L_\Z) \otimes 
\oblv_L
\circ 
f^{!,\dR}.
$$
The assertion is now manifest, as $\Psi_\Z \Upsilon_\Z = \L_\Z \otimes -$.
The second formula is proven in exactly the same way.
\end{proof}

\begin{cor}
Let $f: \Y \to \Z$ be a proper (in particular, schematic) map between Gorenstein QCA stacks. Then, for $Q \in \QCoh(\Y)$ and $\F \in \Dmod(\Z)$, there is a natural isomorphism
\begin{equation} \label{adj:hybrid}
\RHom_{\QCoh(\Z)}
\bigt{
f_*Q, \ul{\F}
}
\simeq
\RHom_{\QCoh(\Y)}
\Bigt{
f^*(\L_\Z) \otimes Q, \L_\Y \otimes \ul{f^!\F}
}.
\end{equation}
\end{cor}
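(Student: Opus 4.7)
The plan is to chain adjunctions, with formula \eqref{magic-formula-2} from the preceding lemma serving as the bridge between $\QCoh$ and $\Dmod$.

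First, I would apply the $(\ind_L, \oblv_L)$-adjunction on $\Z$ to rewrite the LHS of the claim as $\RHom_{\Dmod(\Z)}\bigt{\ind_L(f_*Q), \F}$. Next, I would invoke the formula \eqref{magic-formula-2} to identify
$$
\ind_L(f_*Q) \simeq f_{*,\ren}\bigt{\ind_L(\L_\Y^{-1} \otimes f^*(\L_\Z) \otimes Q)},
$$
thereby reducing the question to a Hom along the pushforward $f_{*,\ren}$.

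Since $f$ is proper (hence, in particular, safe), the functor $f_{*,\ren}$ admits $f^{!,\dR}$ as a right adjoint. Pushing $f^{!,\dR}$ over to the second argument and then applying the $(\ind_L, \oblv_L)$-adjunction on $\Y$, the expression becomes $\RHom_{\QCoh(\Y)}\bigt{\L_\Y^{-1} \otimes f^*(\L_\Z) \otimes Q, \ul{f^!\F}}$.

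Finally, using that $\L_\Y$ is an invertible object of $\QCoh(\Y)$, I would move the factor $\L_\Y^{-1}$ from the first argument into $\L_\Y$ in the second, obtaining the claimed isomorphism. The only genuinely delicate point is the adjunction $f_{*,\ren} \dashv f^{!,\dR}$ used in the middle step: this is where the properness hypothesis is actually needed, and where one must verify that the renormalized pushforward does not differ in a way that obstructs the adjunction in the QCA context.
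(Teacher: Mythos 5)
Your proof is correct and follows what must be the paper's intended route, since the corollary is stated immediately after the lemma with no separate proof: pass through $(\ind_L,\oblv_L)$ on $\Z$, substitute \eqref{magic-formula-2}, use $f_{*,\ren} \dashv f^{!,\dR}$, apply $(\ind_L,\oblv_L)$ on $\Y$, and move the invertible factor $\L_\Y^{-1}$. The subtlety you flag is real but harmless: for $f$ proper (hence safe and coherence-preserving) the renormalized pushforward $f_{*,\ren}$ coincides with $f_{*,\dR}$, which is left adjoint to $f^{!,\dR}$. (Using \eqref{magic-formula} instead, with the $(\ind_R,\oblv_R)$ and $(\Xi,\Psi)$ adjunctions and the identity $\Psi\oblv_R \simeq \L_\Y\otimes\oblv_L$, lands directly on the stated RHS without the final line-bundle shuffle, but the two routes are equivalent.)
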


\ssec{Setting up the proof}

\sssec{}

It will be actually convenient to slightly reformulate the result. 
Let us introduce the following terminology: we say that a functor $F: \C \to \D$ is fully faithful on a pair $(c,c') \in \C \times \C$ iff it induces an isomorphism 
$$
\RHom_\C(c, c') \xto{\;\; \simeq \;\;} \RHom_\D(F(c), F(c')).
$$

\sssec{}

It is clear that following theorem implies (and in fact it is equivalent to) Theorem \ref{mainthm:St-fully faithful on CohN}.

\begin{thm} \label{thm:St is fully faith on cpts}
The functor 
$$
\ul\St_G \otimes -
: 
\QCoh(\LSG)
\longto
\QCoh(\LSG)
$$
is fully faithful on pairs of the form $(c, c') \in \QCoh(\LSG) \times \Coh_\N(\LSG)$.
\end{thm}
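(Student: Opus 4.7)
The assertion is equivalent, by the adjunction $\ul\St_G \otimes - \dashv \CHom(\ul\St_G, -)$ and Yoneda, to showing that the unit map
$$
c' \longrightarrow \CHom_{\QCoh(\LSG)}(\ul\St_G, \ul\St_G \otimes c')
$$
is an isomorphism for every $c' \in \Coh_\N(\LSG)$. To attack this I would apply the continuous functor $\oblv_L$ to the defining cofiber sequence of $\St_G$, obtaining
$$
\uscolim{P \in \Par'} \; \ul{(\p_P)_{*,\dR}\omega_{\LSP}} \longrightarrow \L_{\LSG} \longrightarrow \ul\St_G
$$
in $\QCoh(\LSG)$, and then map each term into $\ul\St_G \otimes c'$. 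This expresses $\CHom(\ul\St_G, \ul\St_G \otimes c')$ as the fiber of a morphism from $\L_{\LSG}^{-1} \otimes \ul\St_G \otimes c'$ to $\lim_{P \in \Par'} \CHom(\ul{(\p_P)_{*,\dR}\omega_{\LSP}}, \ul\St_G \otimes c')$, and the task becomes showing this fiber is canonically $c'$.

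For each fixed $P$, the term in the limit is evaluated via the hybrid adjunction \eqref{adj:hybrid}, which converts it into a $\RHom$ on $\LSP$ involving $\p_P^{!,\dR}$ of the target. Into this computation the functional equation of Theorem \ref{mainthm: StG vs StM} is inserted decisively: it replaces $\p_P^{!,\dR}(\St_G)$ with $(\i_P)_{*,\dR}(\St_M)[\rk(G)-\rk(M)]$, transferring the computation to the Levi side via the fully-faithful embedding $(\i_P)_{*,\dR}$ from the contraction principle (Section \ref{ssec:Braden}). I would then apply Braden's theorem (Theorem \ref{thm:CT for Dmod on LS}) to rewrite the resulting constant-term data via the opposite parabolic, where the second adjunction yields clean cancellations.

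After these reductions, the limit over $\Par'$ is handled by the Weyl-combinatorial mechanism developed in Section \ref{sec:funct eqn St}: only the top stratum $w_0'$ contributes, all lower strata being killed by cofinality arguments analogous to those around Lemma \ref{lem:colim of poset of parts }. The top contribution yields $c'$ itself, with the $[\rk(G)-\rk(M)]$ shifts cancelling against those introduced by the stratification, and a direct check identifies the resulting map with the unit.

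The principal technical obstacle is that $\ul\St_G \otimes c'$ is not manifestly the underlying sheaf of a D-module, so \eqref{adj:hybrid} does not apply off the shelf. The hypothesis $c' \in \Coh_\N(\LSG)$ is the essential ingredient that resolves this: the nilpotent singular support endows $c'$ with enough D-module-like functoriality through the Arinkin--Gaitsgory microlocal framework (cf.\ Section \ref{ssec:prelims}) to justify the adjunction and the subsequent application of Braden.
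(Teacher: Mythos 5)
The proposal takes a genuinely different route from the paper — resolving $\ul\St_G$ by its defining cofiber sequence and computing $\CHom(\ul\St_G, \ul\St_G\otimes c')$ directly for arbitrary $c'\in\Coh_\N(\LS_G)$ — rather than, as the paper does, Karoubi-decomposing $c'$ into the generators $(\p_P)_*(\F_P)$ with $\F_P\in\Perf(\LS_P)$ and proceeding by induction on the semisimple rank of $G$. The issue is that your route has a genuine gap at exactly the point you flag.

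The hybrid adjunction \eqref{adj:hybrid} requires the target of the $\RHom$ to be of the form $\ul\F = \oblv_L(\F)$ for a D-module $\F$: its proof moves the computation through the $(\ind_L,\oblv_L)$ adjunction, then the proper $(f_{*,\ren}, f^!)$ adjunction for D-modules, then back. If the target is not literally in the image of $\oblv_L$, none of these steps are available. Your target $\ul\St_G\otimes c'$ is generally not the underlying sheaf of a D-module, and having nilpotent singular support does not fix this: $\Coh_\N(\LS_G)$ is not contained in $\oblv_L(\Dmod(\LS_G))$, nor does the Arinkin--Gaitsgory microlocal package produce a $!$-pullback functoriality for coherent sheaves of the sort your argument would need. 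So the assertion that nilpotent singular support ``endows $c'$ with enough D-module-like functoriality to justify the adjunction'' is precisely what is missing, and there is no cheap way to supply it.

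The paper sidesteps this by placing the burden on $c'$ instead of $\ul\St_G$: since the objects $(\p_P)_*(\F_P)$ Karoubi-generate $\Coh_\N(\LS_G)$, one can reduce to $c'$ of this form, at which point adjunction along $\p_P$ transfers the problem to $\LS_P$, where the $\otimes\,\ul{(\i_P)_*\St_M}$ operation \emph{is} tensoring by the underlying sheaf of an honest D-module. The contraction principle ($(\i_P)_*$ fully faithful, $(\q_P)_!\simeq \i_P^!$) then applies cleanly, and the induction hypothesis on the Levi $M$ closes the loop. Two further misalignments with the paper: Braden's theorem is used there only in the $P=G$ subcase to establish a vanishing over $\LS_G^\irred$, not as a general device to ``rewrite constant-term data''; and the Weyl-combinatorial top-stratum analysis you invoke belongs to the proof of Theorem~\ref{mainthm: StG vs StM} in Section~\ref{sec:funct eqn St}, not to this argument, which instead rests on the generator reduction and the rank induction. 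Without supplying the inductive structure and replacing the illegitimate use of \eqref{adj:hybrid}, the proposal as written does not go through.
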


We will prove this theorem by induction on the semisimple rank of $G$. 
For $T$, the assertion is obvious: this is the base of the induction. We henceforth assume that the theorem is true for any proper Levi subgroup of $G$. 

\sssec{}

Observe that the property of a continuous functor $F$ to be fully faithful on a pair $(c,c')$ is preserved by taking arbitrary colimits in the first variable, and Karoubi colimits (that is, finite colimits and retracts) in the second variable.
Hence, it is enough to show that $(c,c')$ has the required property for $c'$ running through a fixed set of Karoubi generators of $\Coh_\N(\LSG)$.

\sssec{}

Thanks to \cite[Corollary 13.3.10]{AG1}, we know that the objects
$$
(\p_P)_* (\F_P),
\hspace{.4cm}
\mbox{ for all
$P \in \Par$ and $\F_P \in \Perf(\LSP)$,}
$$ 
Karoubi-generate $\Coh_\N(\LSG)$. Thus, we need to show that the map
$$
\RHom(\F, \F')
\longto
\RHom(\F \otimes \ul\St_G, \F' \otimes \ul\St_G)
$$
is an isomorphism for $\F'$ as above and $\F$ arbitrary.
Let us distinguish two cases: $P \neq G$ (to be treated next, in Section \ref{ssec:first case}) and $P =G$ (to be treated later, in Section \ref{ssec:second case}).

\ssec{The first case: $P \neq G$} \label{ssec:first case}
 
 \sssec{}
 
Let $P$ be a proper parabolic.
We need to show that, for $\F \in \QCoh(\LSG)$ and $\F_P \in \Perf(\LSP)$, the natural map
$$
\RHom_{\QCoh(\LSG)}
\bigt{ 
\F, (\p_P)_*(\F_P)
}
\longto
\RHom_{\QCoh(\LSG)}
\bigt{
\F \otimes \ul\St_G, (\p_P)_*(\F_P) \otimes \ul\St_G
}
$$ 
is an isomorphism. By adjunction, we have:
$$
\RHom_{\QCoh(\LSG)}
\bigt{
\F \otimes \ul\St_G, (\p_P)_*(\F_P) \otimes \ul\St_G
}
\simeq
\RHom_{\QCoh(\LSP)}
\bigt{
(\p_P)^*(\F) \otimes \ul{(\i_P)_* \St_M}, \F_P \otimes \ul{(\i_P)_* \St_M}
}.
$$ 
Thus, the assertion reduces to the following one.

\begin{thm} \label{thm: P-fully faithfulness }
The functor 
$$
\ul{ (i_P)_* \St_M } \otimes -:
\Perf(\LSP) \to \QCoh(\LSP)
$$
is fully faithful.
\end{thm}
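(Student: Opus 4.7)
The plan is to reduce Theorem \ref{thm: P-fully faithfulness} to the inductive hypothesis, namely Theorem \ref{thm:St is fully faith on cpts} applied to the Levi $M$, which has strictly smaller semisimple rank than $G$ since $P$ is a proper parabolic. The bridge between $\LSP$ and $\LSM$ is provided by the contraction principle of Section \ref{ssec:Braden}.

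Setting $K := \ul{(\i_P)_*\St_M}$, the first reduction is the following. Since objects of $\Perf(\LSP)$ are dualizable, for any $F,F'\in\Perf(\LSP)$ one has the canonical identification
$$
\RHom_{\QCoh(\LSP)}(F\otimes K,\ F'\otimes K) \simeq \Gamma\bigl(\LSP,\ F^\vee\otimes F'\otimes\mathcal{H}om_{\QCoh(\LSP)}(K,K)\bigr).
$$
Since $\Perf(\LSP)$ consists of compact generators of $\QCoh(\LSP)$, testing against all $F,F'\in\Perf(\LSP)$ shows that the theorem reduces to the assertion that the natural unit map $\mathcal{O}_{\LSP}\to\mathcal{H}om_{\QCoh(\LSP)}(K,K)$ is an isomorphism.

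The next step is to compute $\mathcal{H}om_{\QCoh(\LSP)}(K,K)$ by using adjunctions to transfer the computation to $\LSM$. The $(\ind_L,\oblv_L)$-adjunction combined with the contraction principle---which gives $(\i_P)_{*,\dR}$ fully faithful with left adjoint $(\i_P)^{*,\dR}\simeq(\q_P)_{*,\dR}$---yields, for any $\F\in\QCoh(\LSP)$,
$$
\RHom_{\QCoh(\LSP)}(\F,K) \;\simeq\; \RHom_{\Dmod(\LSM)}\bigl((\q_P)_{*,\dR}\ind_L(\F),\ \St_M\bigr),
$$
an identification that upgrades sheafily. Specializing to $\F=K$ and simplifying $(\q_P)_{*,\dR}\ind_L(K)$ via the magic formulas \eqref{magic-formula}-\eqref{magic-formula-2}, one expresses $\mathcal{H}om_{\QCoh(\LSP)}(K,K)$ in terms of a $\mathcal{H}om$ computation on $\LSM$ of the form $\mathcal{H}om_{\QCoh(\LSM)}(\ul\St_M\otimes L,\ \ul\St_M)$, where $L$ is an invertible shifted line bundle determined by $\L_{\LSM}^{-1}\otimes\i_P^*(\L_{\LSP})$. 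Applying the inductive hypothesis for $M$ (which provides $\mathcal{H}om_{\QCoh(\LSM)}(\ul\St_M,\ul\St_M)\simeq\mathcal{O}_{\LSM}$), this $\mathcal{H}om$ identifies with $L^{-1}$; tracing back through the adjunctions one then obtains $\mathcal{H}om_{\QCoh(\LSP)}(K,K)\simeq\mathcal{O}_{\LSP}$, as required.

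The main obstacle lies in this last matching of twists: the passages between $\QCoh$ and $\Dmod$ via \eqref{magic-formula}-\eqref{magic-formula-2}, together with the contraction identifications, each introduce shifts and tensor factors involving the shifted line bundles $\L_{\LSP}$, $\L_{\LSM}$, and the relative dualizing complex of $\q_P$. Verifying that these precisely cancel to produce $\mathcal{O}_{\LSP}$ (with no residual twist) requires careful bookkeeping, which is however controlled by the quasi-smoothness of $\LSP$ and $\LSM$ and the structure of $\q_P:\LSP\to\LSM$ as contraction onto its $\Gm$-fixed locus $\LSM$.
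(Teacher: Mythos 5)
The high-level plan is sound and essentially parallel to the paper's: reduce (using dualizability of perfect complexes) to showing that the unit $\O_{\LSP}\to\CHom_{\QCoh(\LSP)}(K,K)$ is an isomorphism, transfer the computation to $\LS_M$ using the contraction principle, and close the loop with the inductive hypothesis for $M$. The paper's own proof does exactly this, only packaged as fully faithfulness on pairs $(\F,\O_{\LSP})$ instead of through $\CHom(K,K)$. However, there are two genuine gaps in the execution, and the second one is serious.

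First, a logical slip: after the identification $\RHom_{\QCoh(\LSP)}(\F,K)\simeq\RHom_{\Dmod(\LSM)}((\q_P)_{*,\dR}\ind_L(\F),\St_M)$, specializing to $\F=K$ only computes $\RHom(K,K)$, i.e.\ $\Gamma(\LSP,\CHom(K,K))$, not $\CHom(K,K)$. What the reduction actually requires is to compute $\RHom(\F\otimes K,K)$ for $\F$ ranging over a generating family. The phrase \virg{upgrades sheafily} does not supply this: the right-hand side lives over $\LSM$, so any genuine sheafy upgrade would have to involve a pullback or pushforward along $\q_P$ or $\i_P$, and you have not specified which, nor why the result lands in $\O_{\LSP}$.

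Second, and more substantially, the route through $\oblv_L$ and the magic formulas \eqref{magic-formula}--\eqref{magic-formula-2} does not carry through as claimed. Following your chain of adjunctions with $\F\otimes K$ in place of $\F$ gives, via the projection formula for $\ind_L$ and for $(\i_P)_{*,\dR}$, and then $(\q_P)_{*,\dR}(\i_P)_{*,\dR}\simeq\id$:
$$\RHom_{\QCoh(\LSP)}(\F\otimes K,\ K)\ \simeq\ \RHom_{\Dmod(\LSM)}\bigl(\,\i_P^{!}(\ind_L\F)\sotimes\St_M,\ \St_M\,\bigr).$$
At this point one must descend back from $\Dmod(\LSM)$ to $\QCoh(\LSM)$ so as to invoke the inductive hypothesis. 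For that one would want to rewrite $\i_P^{!}(\ind_L\F)\sotimes\St_M$ as $\ind_L$ of an object of $\QCoh(\LSM)$, but $\i_P^{!}\ind_L$ is not $\ind_L\i_P^{*}$ (one only has $\oblv_L\,\i_P^{!}\simeq\i_P^{*}\,\oblv_L$, and this identity does not pass to left adjoints since $\i_P^{!}$ has no left adjoint here). The magic formulas do not apply either: they concern $\ind_L\circ f_{*,\QCoh}$, not the combination $\ind_L\oblv_L\circ f_{*,\dR}$ appearing in $\ind_L(K)$. This is precisely the obstruction that the paper's proof avoids by switching to right D-modules: in the $\ICoh$/$\oblv_R$ picture, one base-changes through the formal completion $(\LSP)^\wedge_{\LSM}$ and then uses that both sides of the resulting $\RHom_{\ICoh(\LSM)}$ lie in $\Upsilon(\QCoh(\LSM))$, which hinges on the nontrivial observation that $\xi^\ICoh_*$ preserves $\QCoh$ because $\q_P$ is quasi-smooth. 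Your claim that the twists are \virg{controlled by quasi-smoothness} gestures in the right direction, but the quasi-smoothness of $\q_P$ is used by the paper for this specific purpose, not merely to match shifted line bundles. As written, the proposal waves past the core technical content of the argument.
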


\begin{proof}
It suffices to prove that the map
\begin{equation} \label{eqn:StP fully faithful}
\RHom_{\QCoh(\LSP)} 
(\F, \O_{\LSP})
\longto
\RHom_{\ICoh(\LSP)}
\Bigt{
\F
\stackrel \act\otimes 
\oblv_R ( (i_P)_* \St_M )
,
\oblv_R ( (i_P)_* \St_M )
}
\end{equation}
is an isomorphism for any $\F \in \Perf(\LSP)$, where $\stackrel \act \otimes$ denotes the action of $\QCoh$ on $\ICoh$.

\sssec*{Step 1}

Let us start manipulating the RHS. By adjunction and then projection formula, it is isomorphic to 
$$
\RHom_{\Dmod(\LSP)}
\Bigt{
\ind_R
(\Upsilon\F)
\sotimes 
 (i_P)_* \St_M 
,
 (i_P)_* \St_M 
}
\simeq
\RHom_{\Dmod(\LSP)}
\Bigt{
 (i_P)_*
 \bigt{
i_P^!(\ind_R
(\Upsilon\F))
\sotimes 
 \St_M 
 }
,
 (i_P)_* \St_M 
}.
$$
Let us now recall that, by the contraction principle, the functor $(i_P)_*$ is fully faithful. Hence, the RHS of \eqref{eqn:StP fully faithful} is isomorphic to 
$$
\RHom_{\Dmod(\LSM)}
\Bigt{
i_P^!(\ind_R
(\Upsilon\F))
\sotimes 
 \St_M 
,
\St_M 
}.
$$

\sssec*{Step 2}

Our next goal is to eliminate the two occurrencies of $\St_M$ from the Hom space above. This will be done by a diagram chase, together with the induction hypothesis.
Consider the following cartesian square:
$$ 
\begin{tikzpicture}[scale=1.5]
\node (01) at (0,1) {$(\LSP)^\wedge_{\LSM}$};
\node (mezzo) at (1.5,1) {$\LSM$};
\node (11) at (3,1) {$(\LSM)_\dR$};
\node (10) at (3,0) {$(\LSP)_\dR$.};
\node (00) at (0,0) {$\LSP$};
\path[->,font=\scriptsize,>=angle 90]
(00.east) edge node[above] {$ $} (10.west);
\path[->,font=\scriptsize,>=angle 90]
(01.east) edge node[above] {$\xi$} (mezzo.west);
\path[->,font=\scriptsize,>=angle 90]
(mezzo.east) edge node[above] {$ $} (11.west);
\path[->,font=\scriptsize,>=angle 90]
(01.south) edge node[left] {$\wh i_P$}  (00.north);
\path[->,font=\scriptsize,>=angle 90]
(11.south) edge node[right] {$i_P$} node[left] {} (10.north);
\end{tikzpicture}
$$
Base-change along this diagram, together with the $(\ind_R, \oblv_R)$ adjunction, yields
$$
\RHom_{\Dmod(\LSM)}
\Bigt{
i_P^!(\ind_R
(\Upsilon\F))
\sotimes 
 \St_M 
,
\St_M 
}
\simeq
\RHom_{\ICoh(\LSM)}
\Bigt{
\xi_*^\ICoh \bigt{ \wh i_P^! (\Upsilon\F) }
\sotimes 
\oblv_R(\St_M)
,
\oblv_R(\St_M)
}.
$$
The two ind-coherent sheaves appearing on the RHS belong to the full subcategory $\Upsilon(\QCoh(\LSM))$: this is obvious for the right one; as for the left one, it suffices to notice that $\xi_*^\ICoh$ sends $\QCoh((\LSP)^\wedge_{\LSM})$ to $\QCoh(\LSM)$ since $\q_P$ is quasi-smooth.
Hence, we can use the induction hypothesis (that is, Theorem \ref{thm:St is fully faith on cpts} for the group $M$) to obtain
$$
\RHom_{\ICoh(\LSM)}
\Bigt{
\xi_*^\ICoh \bigt{ \wh i_P^! (\Upsilon\F) }
\sotimes 
\oblv_R(\St_M)
,
\oblv_R(\St_M)
}
\simeq
\RHom_{\ICoh(\LSM)}
\Bigt{
\xi_*^\ICoh \bigt{ \wh i_P^! (\Upsilon\F) }
,
\omega_{\LSM}
},
$$
which is in turn isomorphic to 
$$
\RHom_{\Dmod(\LSM)}
\Bigt{
i_P^!(\ind_R
(\Upsilon\F))
,
\omega_{\LSM}
}
$$
by reasoning backwards.

\sssec*{Step 3}

Recall that, by the contraction principle again, the functor $(q_P)_!:\Dmod(\LSP) \to \Dmod(\LSM)$ is well-defined and isomorphic to $i_P^{!}$. 
We conclude that
$$
\RHom_{\Dmod(\LSM)}
\Bigt{
i_P^!(\ind_R
(\Upsilon\F))
,
\omega_{\LSM}
}
\simeq
\RHom_{\Dmod(\LSP)}
\Bigt{
\ind_R
(\Upsilon\F)
,
\omega_{\LSP}
}.
$$
The RHS is now manifestly isomorphic to $\RHom_{\QCoh(\LSP)} 
(\F, \O_{\LSP})$, as desired.
\end{proof}

\ssec{The second case: $P = G$} \label{ssec:second case}

\sssec{}

The next case is the one with $P=G$, so that $\F'$ is perfect (while $\F$ is still arbitrary). We need to show that the map
$$
\RHom_{\QCoh(\LSG)}
\bigt{ \F, \F'}
\longto
\RHom_{\QCoh(\LSG)}
\bigt{
\ul\St_G \otimes \F, \ul\St_G \otimes \F'
}
$$
is an isomorphism.

\sssec{}

Without loss of generality, we may assume that $\F' \simeq \O_{\LSG}$. Thus, we need to prove that the arrow
\begin{equation} \label{eqn:wtSt-flly faithful case (any,G)}
\RHom_{\QCoh(\LSG)}
(\F, \O_{\LSG})
\longto
\RHom_{\QCoh(\LSG)}
(\F \otimes \ul\St_G, \ul\St_G)
\end{equation}
is an isomorphism for arbitrary $\F$. It suffices to do this for $\F$ running through a fixed collection of generators of $\QCoh(\LSG)$.
Thus we assume that:
\begin{itemize}
\item
either $\F = j_*(\F_0)$, with $j: \LSG^\irred \hto \LSG$ the open substack of irreducible $G$-local systems and $\F_0 \in \QCoh(\LSG^\irred)$,
\smallskip
 \item
or $\F = (\p_P)_*(\F_P)$ with $P \in \Par'$ and $\F_P \in \Perf(\LSP)$.
\end{itemize}
 We treat these two subcases separately.

\sssec{}

Let $\F = j_*(\F_0)$ for some $\F_0 \in \QCoh(\LSG^\irred)$. Note that $j_*(\F_0) \otimes \ul\St_G \simeq j_*(\F_0)$ by the projection formula. Hence, we just need to show that the map
$$
\RHom_{\QCoh(\LSG)}(j_*(\F_0), \O_{\LSG})
\longto
\RHom_{\QCoh(\LSG)}
\bigt{
j_*(\F_0), \ul{\St_G}
}
$$ 
is an isomorphism. Equivalently, we need to show that 
$$
\RHom_{\QCoh(\LSG)}
\Bigt{
j_*(\F_0), 
\uscolim{P \in \Par'} \;  \ul { \p_{P,*} \omega_{\LSP} 
}
}
\simeq 
0.
$$
This fact is a consequence of the next lemma.

\begin{lem} 
For any $P \in \Par'$ and any $\F \in \QCoh(\LSG^\irred)$, we have
$$
\RHom_{\QCoh(\LSG)}
\Bigt{
j_*(\F), \ul{(\p_P)_* \omega_{\LSP}} 
}
\simeq 
0.
$$
\end{lem}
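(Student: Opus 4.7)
The plan is to transfer the Hom computation from $\QCoh(\LSG)$ into $\Dmod(\LSG)$ via the $(\ind_L,\oblv_L)$-adjunction, apply Braden's second adjunction (Theorem \ref{thm:CT for Dmod on LS}) to trade the Eisenstein pushforward for a constant-term pullback along the opposite parabolic, and then reduce to a base-change argument resting on the elementary geometric fact that an irreducible $G$-local system admits no reduction to a proper parabolic.

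More concretely, the $(\ind_L,\oblv_L)$-adjunction identifies the Hom we want to vanish with
$$
\RHom_{\Dmod(\LSG)}\bigl(\ind_L(j_*\F),\,(\p_P)_{*,\dR}\omega_{\LSP}\bigr).
$$
Since $j:\LSG^\irred\hto\LSG$ is an open embedding and $j^{*}D_{\LSG}\simeq D_{\LSG^\irred}$, the projection formula supplies a commutation $\ind_L\circ j_*\simeq j_{*,\dR}\circ\ind_L$. Recognising $(\p_P)_{*,\dR}\omega_{\LSP}\simeq \Eis^{\Dmod}_{P,*}(\omega_{\LSM})$ and invoking Theorem \ref{thm:CT for Dmod on LS}, we replace $\Eis^{\Dmod}_{P,*}$ by its continuous left adjoint $\CT^{\Dmod}_{P,!}\simeq\CT^{\Dmod}_{P^-,*}=(\q_{P^-})_{*,\dR}(\p_{P^-})^{!,\dR}$. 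It therefore suffices to prove
$$
(\p_{P^-})^{!,\dR}\bigl(j_{*,\dR}\ind_L\F\bigr)=0.
$$

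For the last vanishing, form the Cartesian square with corner $W^-:=\LS_{P^-}\times_{\LSG}\LSG^\irred$ and the obvious projections to $\LS_{P^-}$ and $\LSG^\irred$: base change along this square yields $(\p_{P^-})^{!,\dR}j_{*,\dR}\simeq j''_{*,\dR}(\p_{P^-}')^{!,\dR}$, so our object is pushed forward from $\Dmod(W^-)$. The definitional content of irreducibility ensures $W^-=\emptyset$: no $\sigma\in\LSG^\irred$ admits a reduction to the proper parabolic $P^-$, so the classical truncation $W^{-,\cl}$ is empty and hence $W^-$ itself is empty as a derived stack. Thus $\Dmod(W^-)=0$ and the vanishing follows. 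The only genuinely geometric input is this emptiness; everything else is formal. The step needing most care is the commutation $\ind_L\circ j_*\simeq j_{*,\dR}\circ\ind_L$, which at the derived stack level is still a direct consequence of the projection formula for the open embedding $j$.
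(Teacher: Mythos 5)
Your proof is correct and follows essentially the same route as the paper: transfer to $\Dmod$ via the $(\ind_L,\oblv_L)$-adjunction (the paper uses its formula \eqref{magic-formula-2} to handle the commutation $\ind_L\circ j_*\simeq j_{*,\dR}\circ\ind_L$, which for an open embedding is precisely the projection-formula argument you give), then apply the second adjunction (Theorem \ref{thm:CT for Dmod on LS}) and conclude by the emptiness of $\LS_{P^-}\times_{\LSG}\LSG^\irred$. The paper compresses the final base-change-plus-emptiness step into the phrase \virg{this follows immediately from the second adjunction}; you have correctly unpacked what that \virg{immediately} means, including the point that classical emptiness of the fiber product suffices to conclude derived emptiness.
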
 
 
 \begin{proof}
Consider the functor $\Eis_{P,*}^{\Dmod}: \Dmod(\LSM) \to \Dmod(\LSG)$ defined by $\p_{P,*}\circ \q_P^!$.
Adjunction, together with \eqref{magic-formula-2}, gives
$$
\RHom_{\QCoh(\LSG)}
\bigt{
j_*(\F), \ul{(\p_P)_* \omega_{\LSP}} 
}
\simeq 
\RHom_{\Dmod(\LSG)}(j_*(\ind_L(\F)), (\p_P)_* \omega_{\LSP}).
$$
Then we need to show that any object of $\Dmod(\LSG^\irred)$ is left orthogonal to $\Eis_{P,*}^{\Dmod}(\omega_{\LSM}) \simeq (\p_P)_* \omega_{\LSP}$.
This follows immediately from the \virg{second adjunction}, that is, Theorem \ref{thm:CT for Dmod on LS}.
\end{proof}

\sssec{}

Finally, let us assume that $\F = (\p_P)_*(\F_P)$ in \eqref{eqn:wtSt-flly faithful case (any,G)}.
We need to show:

\begin{prop}
For any $\F_P \in \Perf(\LSP)$, the functor $\ul\St_G \otimes -$ yields an isomorphism
$$
\RHom_{\QCoh(\LSG)}
\bigt{
(\p_P)_* (\F_P), \O_{\LSG}
}
\simeq
\RHom_{\QCoh(\LSG)}
\bigt{
(\p_P)_* (\F_P) \otimes \ul{\St_G} , 
 \ul{\St_G}
}.
$$
\end{prop}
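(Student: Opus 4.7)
The plan is to apply the $((\p_P)_*, \p_P^!)$-adjunction of $\ICoh$ to convert both sides of the asserted isomorphism into $\RHom$-spaces on $\LSP$, and then recognize the resulting comparison as an instance of \thmref{thm: P-fully faithfulness}, already established in the $P' \neq G$ case.

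Explicitly, since $\p_P$ is proper and quasi-smooth, $\p_P^!(\O_{\LSG}) \simeq \omega_{\LSP/\LSG}$ is a shifted line bundle, and adjunction rewrites the left-hand side as $\RHom_{\QCoh(\LSP)}(\F_P, \omega_{\LSP/\LSG})$. For the right-hand side, the projection formula along $\p_P$ gives $(\p_P)_*(\F_P)\otimes\ul\St_G \simeq (\p_P)_*(\F_P\otimes\p_P^*\ul\St_G)$; using that $\oblv_L$ intertwines $*$-pullback of quasi-coherent sheaves with $!$-pullback of D-modules, \thmref{mainthm: StG vs StM} yields
$$
\p_P^*\ul\St_G \;\simeq\; \ul{(\i_P)_{*,\dR}\St_M}[\rk(G)-\rk(M)].
$$
Combined with the standard identity $\p_P^!(Q)\simeq \p_P^*(Q)\otimes\omega_{\LSP/\LSG}$ for $Q\in\QCoh(\LSG)$, a second application of adjunction rewrites the right-hand side as
$$
\RHom_{\QCoh(\LSP)}\Bigl(\F_P\otimes \ul{(\i_P)_{*,\dR}\St_M},\;\omega_{\LSP/\LSG}\otimes \ul{(\i_P)_{*,\dR}\St_M}\Bigr),
$$
with the cohomological shifts cancelling between source and target.

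Under these identifications the comparison map is precisely tensoring with $\ul{(\i_P)_{*,\dR}\St_M}$, and since both $\F_P$ and $\omega_{\LSP/\LSG}$ lie in $\Perf(\LSP)$, the result is an immediate instance of \thmref{thm: P-fully faithfulness}. The only nontrivial bookkeeping, which I expect to be the main (but routine) obstacle, is the naturality check: one must verify that, under the above chain of adjunctions and identifications, the map induced by $-\otimes\ul\St_G$ on the $\LSG$ side coincides with the map induced by $-\otimes\ul{(\i_P)_{*,\dR}\St_M}$ on the $\LSP$ side. Once this naturality is established, the content of the proposition is entirely absorbed into the first case already treated.
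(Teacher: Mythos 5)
Your proof is correct and arrives at the same key input as the paper, namely Theorem~\ref{thm: P-fully faithfulness}, but takes a slightly different technical route. The paper first passes to D-modules via the $(\ind_L,\oblv_L)$-adjunction (so that $\O_{\LSG}$ becomes $\omega_{\LSG,\dR}$ and $\ul\St_G$ becomes $\St_G$), then invokes the \virg{magic formula} \eqref{magic-formula} for $\ind_L\circ(\p_P)_*$, then applies the D-module $((\p_P)_{*,\dR},\p_P^{!,\dR})$ adjunction and the Steinberg functional equation before translating back to $\QCoh$. You stay entirely inside $\QCoh$ and use the projection formula plus Grothendieck--Serre duality (available since $\p_P$ is proper and Gorenstein, as both source and target are quasi-smooth), together with the observation that the Steinberg functional equation descends along $\oblv_L$ to give $\p_P^*\ul\St_G\simeq\ul{(\i_P)_{*,\dR}\St_M}[\rk(G)-\rk(M)]$. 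The two derivations differ only in the explicit twisting line bundles on $\LS_P$: yours is $\omega_{\LSP/\LSG}$, the paper's is the pair $\p_P^*\L_G$, $\L_P$, and these agree up to a shifted line bundle, so Theorem~\ref{thm: P-fully faithfulness} covers both. Your version is arguably more transparent; the paper's version has the advantage of plugging directly into its Lemma on $\ind_L\circ f_*$, which it reuses elsewhere. On the naturality point you flag: it is indeed the only thing that requires (routine) checking, and it is handled by the fact that the projection formula, the duality adjunction, and the compatibility $\oblv_L\circ f^{!,\dR}\simeq f^*\circ\oblv_L$ are all functorial isomorphisms.
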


\begin{proof}
By adjunction, this is equivalent to checking that $\ul\St_G \otimes -$ yields an isomorphism
$$
\RHom_{\Dmod(\LSG)}
\Bigt{
\ind_L
\bigt{
(\p_P)_* \F_P
}, \omega_{\LSG}
}
\simeq
\RHom_{\Dmod(\LSG)}
\Bigt{
\ind_L \bigt{
(\p_P)_* (\F_P) \otimes \ul{\St_G}
}, 
\St_G
}.
$$
Thanks to \eqref{magic-formula}, which in our case looks like
$$
\ind_L \circ (\p_P)_*
\simeq
(\p_P)_{*,\dR} \, \ind_R \, \Xi_{\LSP} ( \p_P^*(\L_G) \otimes  - ),
$$
the LHS becomes 
\begin{eqnarray}
\nonumber
\RHom_{\Dmod(\LSG)}
\Bigt{
(\p_P)_{*,\dR} \, \ind_R \, \Xi_{\LSP} ( \p_P^*(\L_G) \otimes  \F_P )
,
\omega_{\LSG}
}
& \simeq &
\RHom_{\Dmod(\LSP)}
\Bigt{
\ind_R \, \Xi_{\LSP} ( \p_P^*(\L_G) \otimes  \F_P )
,
\omega_{\LSP}
}
\\
\nonumber
& \simeq &
\RHom_{\QCoh(\LSP)}
\Bigt{
\p_P^*(\L_G) \otimes  \F_P 
,
\L_{P}
}.
\end{eqnarray}
Similarly, the RHS side becomes
$$
\RHom_{\QCoh(\LSP)}
\Bigt{
\p_P^*(\L_G) \otimes  \F_P 
\otimes \ul{ (\i_P)_* \St_M }
,
\L_{P} 
\otimes 
\ul{ (\i_P)_* \St_M }
}.
$$
Then we are back to the statement of Theorem \ref{thm: P-fully faithfulness }.
\end{proof}

\sec{Proof of Theorem \ref{mainthm:principal monoidal ideal}} \label{sec:divisibilty by St}

We wish to show that $\ul\St_G$ and $\St_G$ are generators of the monoidal ideals $\QCoh(\LSG)^\ss \subseteq \QCoh(\LSG)$ and $\Dmod(\LSG)^\ss \subseteq \Dmod(\LSG)$, respectively. The claim for $\ul\St_G$, treated in Section \ref{ssec:qcoh case}, will follow tautologically from the claim for $\St_G$, treated immediately below.

\ssec{The D-module case}

\sssec{}

Let us recall the definition of $\Dmod(\LSG)^\ss$. An object $\F \in \Dmod(\LSG)$ belongs to $\Dmod(\LSG)^\ss$ iff
$$
(\p_P)^{!,\dR} (\F) 
\in
(\i_P)_{*,\dR} \bigt{ \Dmod(\LSM) }
,
\hspace{.2cm} \mbox{for any $P$}.
$$
The goal of this section is to prove Theorem \ref{mainthmbis:Dmod ss principal ideal}, which states that any object of $\Dmod(\LSG)^\ss$ is \virg{divisible} by $\St_G$. In other words:

\begin{thm} \label{thm:Dmod-ss generated by St}
The full subcategory $\Dmod(\LSG)^\ss \subseteq \Dmod(\LSG)$ is a principal monoidal ideal generated by $\St_G \in \Dmod(\LSG)$.
\end{thm}

\begin{rem}

Since $\Dmod(\LSG)^\ss$ is evidently cocomplete, it follows that the same holds for the essential image of $\St_G \sotimes -: \Dmod(\LSG) \to \Dmod(\LSG)$. This will be clear from Theorem \ref{thm:Div functor} below, which identifies the \virg{quotient} by $\St_G$ explicitly.

\end{rem}

\sssec{}

For any $P \in \Par$, consider the adjunction $(\CT^\fD_{P^-,!}, \Eis^\fD_{P^-,*})$ introduced in Section \ref{ssec:Braden}. As $P$ varies in $\Par'$, the units of these adjunctions allow to form the limit functor
$$
\lim_{P \in (\Par')^\op}
\Eis^\fD_{P^-,*}
\circ
\CT^\fD_{P^-,!},
$$
as well as a natural arrow $\epsilon: \id_{\Dmod(\LSG)} \to \lim_{P \in (\Par')^\op}
\Eis^\fD_{P^-,*}
\circ
\CT^\fD_{P^-,!}$.
We then set
$$
\Div_G := 
\ker
\Bigt{
\id 
\xto{ \; \;
\epsilon \;\; }
\lim_{P \in (\Par')^\op}
\Eis^\fD_{P^-,*}
\CT^\fD_{P^-,!}
}:
\Dmod(\LSG)^\ss \longto \Dmod(\LSG).
$$
Note that $\Div_G$ is defined, with the same formula, on the entire $\Dmod(\LSG)$. However, we are only interested in it as a functor out of $\Dmod(\LSG)^\ss$.

\begin{thm} [Divisibility by the Steinberg D-module] \label{thm:Div functor}

The functor
$$
\Div_G:
\Dmod(\LSG)^\ss \longto \Dmod(\LSG)
$$
defined above is a section of 
$\St_G \sotimes -: \Dmod(\LSG) \to \Dmod(\LSG)^\ss$.
\end{thm}

\begin{proof}

First, let us reiterate our usual notational convention: since we are only dealing with D-modules, we omit the decoration \virg{$\dR$} on pullback and pushforward functors.

\sssec*{Step 0}

The theorem states that any $\F \in \Dmod(\LSG)^\ss$ is isomorphic to $\St_G \sotimes \Div_G(\F)$. To prove this, it suffices to exhibit an isomorphism
\begin{equation}\label{eqn:main-formula-for-Dmod-princ}
\St_G
\sotimes
\lim_{P \in (\Par')^\op}
\Eis^\fD_{P^-,*}
\CT^\fD_{P^-,!}(\F)
\simeq
\Bigt{ \uscolim{P \in \Par'} \;
(\p_P)_!(\omega_{\LSP}) } [1]
\sotimes
\F
\end{equation}
that intertwines $\St_G \sotimes \epsilon$ with the arrow induced by the structure map $\St_G \to \Bigt{ \uscolim{P \in \Par'} \;
(\p_P)_!(\omega_{\LSP}) } [1]$.

\sssec*{Step 1}

Theorem \ref{mainthm: StG vs StM}, combined with the projection formula, yields
$$
\St_G
\sotimes
\Eis^\fD_{P^-,*}( - )
\simeq
(\i_{MG})_*(\St_M \sotimes -)[\rk G - \rk M],
$$
where we denote by $\i_{MG}: \LSM \to \LSG$ the induction map.
It follows that
$$
\St_G
\sotimes
\Eis^\fD_{P^-,*}
\CT^\fD_{P^-,!}(\F)
\simeq
(\i_{MG})_*
\Bigt{ 
\St_M \sotimes \CT^\fD_{P^-,!} (\F)
}[\rk G - \rk M].
$$

\sssec*{Step 2}

By the second adjunction $\CT^\fD_{P^-,!} \simeq \CT^\fD_{P,*}$, the latter is isomorphic to 
$$
(\i_{MG})_*
\Bigt{ 
\St_M \sotimes \CT^\fD_{P,*} (\F)
}[\rk G - \rk M],
$$
and further, by the definition of $\St_M$, to
\begin{equation} \label{eqn:super-long}
\coker
\Bigt{
\uscolim{Q \subsetneq P}
\;
(\i_{MG})_*
\bigt{ 
(\p_{Q \cap M \to M})_*(\omega_{\LS_{Q \cap M}}) \sotimes \CT^\fD_{P,*} (\F)
}
\to
(\i_{MG})_* \CT^\fD_{P,*} (\F)
}
[\rk G - \rk M].
\end{equation}
In the next two steps, we use the assumption that $\F \in \Dmod(\LSG)^\ss$ to simplify this expression.

\sssec*{Step 3}

We have:
$$
(\i_{MG})_* 
\Bigt{ 
\CT^\fD_{P,*} (\F)
}
\simeq
(\p_P)_* (\i_P)_*
\Bigt{ 
(\q_P)_*
(\p_P)^! (\F)
}
\simeq
(\p_P)_* 
\circ
\Bigt{ 
(\i_P)_*
(\q_P)_*
}
\circ
(\p_P)^! (\F).
$$
Now recall that $\F \in \Dmod(\LSG)^\ss$, so that $(\p_P)^!(\F) \simeq (\i_P)_*(\CT_{P,*}^\fD(\F))$. It follows that the monad $(\i_P)_*(\q_P)_*$ acts as the identity on $(\p_P)^! (\F)$. We conclude that 
$$
(\i_{MG})_* 
\Bigt{ 
\CT^\fD_{P,*} (\F)
}
\simeq
(\p_P)_* 
\circ
(\p_P)^! (\F)
\simeq
(\p_P)_* (\omega_{\LSP}) \sotimes \F.
$$

\sssec*{Step 4}

A similar argument yields
$$
(\i_{MG})_*
\bigt{ 
(\p_{Q \cap M \to M})_*(\omega_{\LS_{Q \cap M}}) \sotimes \CT^\fD_{P,*} (\F)
}
\simeq
(\p_Q)_* (\omega_{\LS_Q}) \sotimes \F.
$$
Combining this and the above step, we obtain that \eqref{eqn:super-long} simplifies as
\begin{equation} \label{eqn:super-long-simplified}
\nonumber
\coker
\left(
\left(
\uscolim{Q \subsetneq P}
\;
(\p_Q)_* (\omega_{\LS_Q}) \sotimes \F
\right)
\to
(\p_P)_* (\omega_{\LSP}) \sotimes \F
\right)
[\rk G - \rk M].
\end{equation}

\sssec*{Step 5}

Unwinding the constructions, we obtain that the LHS of \eqref{eqn:main-formula-for-Dmod-princ} is isomorphic to the tensor product of $\F$ with the object
$$
\V := 
\lim_{P \in (\Par')^\op}
\coker
\Bigt{
\uscolim {Q \subsetneq P} 
(\p_Q)_! (\omega_{\LS_Q})
\to
(\p_P)_! (\omega_{\LS_P})
}
[ \rk G - \rk M].
$$
Thus, to obtain an isomorphism as in \eqref{eqn:main-formula-for-Dmod-princ}, it suffices to exhibit an isomorphism
\begin{equation} \label{eqn:topo}
\V
\simeq
\Bigt{ \uscolim{P \in \Par'} \;
(\p_P)_!(\omega_{\LSP}) } [1].
\end{equation}
The construction of this isomorphism (to be performed in the next two steps) will be compatible with the functor $\St \sotimes \epsilon$, as requested in Step $0$; we leave the details to the reader. 

\sssec*{Step 6}

Denote by $\phi: \Par \to \Dmod(\LSG)$ the functor $P \squigto (\p_P)_!(\omega_{\LSP})$. In the spirit of Lemma \ref{lem:colim of poset of parts }, consider the poset $\P'(I \sqcup \infty)$ of proper subsets $I \sqcup \infty$. Here, $I$ is the set of nodes of the Dynkin diagram of $G$ and $\infty$ is an extra node. For any $P \in \Par$, corresponding to the subset $J_P \subseteq I$, we define
$$
\wh \phi_P:  \P'(I \sqcup \infty) \longto \Dmod(\LSG)
$$
as
$$
\wh \phi_P(J) =
\begin{cases}
\phi(J) & \mbox{ if $J \subseteq J_P$;} \\
0  & \mbox{ othewise}.
\end{cases}
$$

\sssec*{Step 7}

Note, in passing, that $\colim \, \wh \phi_G \simeq \St_G$ by definition.
Similarly, by Lemma \ref{lem:colim of poset of parts }, we obtain that
$$
\colim \, \wh \phi_P
\simeq
\coker
\Bigt{
\uscolim {Q \subsetneq P} 
(\p_Q)_! (\omega_{\LS_Q})
\to
(\p_P)_! (\omega_{\LS_P})
}
[ \rk G - \rk M].
$$
This allows to rewrite $\V$ simply as
$$
\V \simeq
\lim_{P \in (\Par')^\op}
\colim \, \wh\phi_P.
$$

\sssec*{Step 8}

In a stable $\infty$-category, finite limits commute with finite colimits, whence
$$
\V \simeq
\uscolim{J \in \P'(I \sqcup \infty)}
\lim_{P \in (\Par')^\op}
\wh\phi_P(J).
$$
It is easy to see that
$$
\lim_{P \in (\Par')^\op}
\wh\phi_P(J)
=
\begin{cases}
\phi(J) & \mbox{ if $J \subsetneq I$;} \\
0  & \mbox{ otherwise}.
\end{cases}
$$
From this, it is clear that 
$$
\V \simeq \uscolim{J \subsetneq I}  \,\phi(J) [1],
$$ 
as desired.
\end{proof}

\ssec{The quasi-coherent case} \label{ssec:qcoh case}

Finally, let us prove that $\QCoh(\LSG)^\ss$ is a principal monoidal ideal generated by $\ul\St_G$.

\sssec{}

By definition, $\QCoh(\LSG)^\ss$ is the cocompletion of the essential image of the functor
$$
\QCoh(\LSG)
\usotimes{\Dmod(\LSG)}
\Dmod(\LSG)^\ss
\longto
\QCoh(\LSG).
$$
Thus, Theorem \ref{thm:Dmod-ss generated by St} implies that $\QCoh(\LSG)^\ss$ is generated under colimits by the essential image of the functor $\ul\St_G \otimes -: \QCoh(\LSG) \to \QCoh(\LSG)$.
It remains to show that any such colimit can be rewritten as a single tensor product $\F \otimes \ul\St$. This can be done as follows, with the help of Theorem \ref{mainthm:St-fully faithful on CohN}.

\begin{lem} \label{lem:ulSt a single tensor}
Suppose that $\F = \colim \psi \in \QCoh(\LSG)$ is the colimit of a diagram
$$
\psi:
A \to \QCoh(\LSG), 
\hspace{.4cm}
a \squigto \F_a,
$$
with each $\F_a$ belonging to the essential image of $\ul\St_G \otimes -: \QCoh(\LSG) \to \QCoh(\LSG)$. Then $\F \simeq \ul\St_G \otimes \G$ for some $\G \in \QCoh(\LSG)$.
\end{lem}

\begin{proof}
Consider the functor 
\begin{equation} \label{eqn:pippo}
\ul\St_G \otimes - : \Perf(\LSG) \to \QCoh(\LSG)
\end{equation}
and let $\C$ be the cocompletion of its essential image. 
We claim that the obvious fully faithful embedding $\C \subseteq \QCoh(\LSG)^\ss$ is an equivalence. This follows from the fact that $\Ind(\Perf(\LSG)) \simeq \QCoh(\LSG)$, i.e., any object of $\QCoh(\LSG)$ is a filtered colimit of perfect objects. 

Hence, it suffices to prove the lemma under the assumption that each $\F_a$ belongs to the essential image of \eqref{eqn:pippo}. Then, for any $a \in A$, there exist a \emph{perfect} object $\P_a$ and an isomorphism $\ul\St_G \otimes \P_a \simeq \F_a$.
The fully faithfulness result of Theorem \ref{mainthm:St-fully faithful on CohN} implies that the given functor $\psi$ determines a functor
$$
\phi:
A \to \Perf(\LSG) \subseteq \QCoh(\LSG), 
\hspace{.4cm}
a \squigto \P_a
$$
and a natural equivalence $\psi \simeq \ul\St_G \otimes \phi$. Taking colimits (and using the fact that colimits commute with tensor products), we finally obtain 
$
\F 
=
\colim \psi 
\simeq 
\colim (\ul\St_G \otimes \phi) 
\simeq \ul\St_G \otimes \colim(\phi),
$ as desired.
\end{proof}


\begin{thebibliography}{99}

\bibitem{AG1} D.~Arinkin, D.~Gaitsgory, {Singular support of coherent sheaves and the geometric Langlands conjecture}. {\newblock Selecta Math. (N.S.) 21 (2015), no. 1, 1-199.}

\bibitem{AG2} D.~Arinkin, D.~Gaitsgory, {The category of singularities as a crystal and global Springer fibers}.
{\newblock J. Amer. Math. Soc. 31 (2018), no. 1, 135-214.}

\bibitem{BIK} D. Benson, S. B. Iyengar, and H. Krause. Local cohomology and support for triangulated categories. Ann. Sci. Ec. Norm. Super. 41(4):573-619, 2008.





\bibitem{shvcatH} D. Beraldo, {\it Sheaves of categories with local actions of Hochschild cochains},
\newblock Compositio Mathematica, 155(8), 1521-1567.




\bibitem{antitemp} D. Beraldo. {\it Tempered D-modules and Borel-Moore homology vanishing},
 \newblock Invent. math. (2021).\\
https://doi.org/10.1007/s00222-021-01036-2
 
\bibitem{strong-gluing} D. Beraldo. {\it The spectral gluing theorem revisited},
\newblock EPIGA, Volume 4 (2020), Article Nr. 9.

\bibitem{ramanujan} D. Beraldo. {\it On the geometric Ramanujan conjecture}, 
\newblock ArXiv:2103.17211.














\bibitem{braden} T. Braden, Hyperbolic localization of Intersection Cohomology, Transformation Groups 8 (2003), no. 3,
209-216.

\bibitem{LinChen} L. Chen. Deligne-Lusztig duality on the moduli stack of bundles. \newblock ArXiv: 2008.09348.


\bibitem{curtis}
C.W. Curtis, The Steinberg character of a finite group with -pair. 
J. Algebra, 4 (1966) pp. 433-441.

\bibitem{CT} V. Drinfeld, D. Gaitsgory, Geometric constant term functors. Sel. Math. New Ser. (2016) Issue 4, pp 1881-1951.

\bibitem{DG-cptgen} V. Drinfeld, D. Gaitsgory, 
Compact generation of the category of D-modules on the stack of G-bundles on a curve.
\newblock Cambridge Journal of Mathematics, 2015.

\bibitem{DG-Braden}  V. Drinfeld, D. Gaitsgory, On a theorem of Braden, Transformation Groups 19 (2014), no. 2,
313-358.


\bibitem{finiteness} V. Drinfeld and D. Gaitsgory, On some finiteness questions for algebraic stacks, GAFA 23 (2013),149–294.


\bibitem{DW}  V. Drinfeld and J. Wang, On a strange invariant bilinear form on the space of automorphic forms, arXiv:1503.04705.

\bibitem{strange} D. Gaitsgory, A strange functional equation for Eisenstein series and Verdier duality on the moduli stack of bundles, arXiv:1404.6780.

\bibitem{ICoh} D.~Gaitsgory, {Ind-coherent sheaves.}
Mosc. Math. J. 13 (2013), no. 3, 399-528, 553. 

\bibitem{Outline} D. Gaitsgory, { Outline of the proof of the geometric Langlands conjecture for $GL_2$}. Asterisque.



\bibitem{ker-adj} D. Gaitsgory, Functors given by kernels, adjunctions and duality, Journal of Algebraic Geometry 25 (2016), 461-548.



\bibitem{contract} D.~Gaitsgory, {Contractibility of the space of rational maps.}
Invent. math. 191, 91-196 (2013).


\bibitem{Book} D.~Gaitsgory, N. Rozenblyum, {\it Studies in derived algebraic geometry}.


\bibitem{hump}
J.E. Humphreys, The Steinberg representation. 
Bull. Amer. Math. Soc. (N.S.) , 16 (1987) pp. 237-263.

\bibitem{HTT} J.~Lurie, {\it Higher Topos Theory}, Princeton Univ. Press (2009).
\bibitem{HA} J.~Lurie, {\it Higher algebra}.
\newblock Available at \url{http://www.math.harvard.edu/~lurie}.



\bibitem{Lu} G. Lusztig. \newblock 
Divisibility of Projective Modules of Finite Chevalley Groups by the Steinberg Module.
\newblock 
Bulletin of the London Mathematical Society. Volume 8, Issue 2, July 1976,  130-134.



\bibitem{Simon} S. Schieder. 
The Harder-Narasimhan stratification of the moduli stack of G-bundles via Drinfeld's compactifications.
\newblock Selecta Mathematica 21(3).



\bibitem{St} R. Steinberg.
Prime power representations of finite linear groups. I, Canad. J. Math. 8 (1956), 580-591; II, ibid. 9 (1957), 347-351.




\bibitem{Wang} J. Wang. 
On an invariant bilinear form on the space of automorphic forms via asymptotics. 
Duke Mathematical Journal, 167(16):2965-3057 (2018).





\end{thebibliography}
\end{document}